\newtheorem{theorem}{Theorem}
\newtheorem{lemma}[theorem]{Lemma}
\newtheorem{definition}[theorem]{Definition}
\newtheorem{proposition}[theorem]{Proposition}
\newtheorem{corol}[theorem]{Corollary}
\newtheorem{problem}{Problem}
\newtheorem{remark}[theorem]{Remark}
\newtheorem{question}{Question}
\newtheorem*{conventions}{Conventions}
\title{$LD$-stability for Goldie rings}
\author{ Vyacheslav Futorny, Jo\~ao Schwarz, Ivan Shestakov}
\date{}
\begin{document}
\maketitle

\begin{abstract}

The \emph{lower transcendence degree}, introduced by J. J Zhang, is an important non-commutative invariant
in ring theory and non-commutative  geometry strongly connected to the classical Gelfand-Kirillov transcendence degree. 
For \emph{$LD$-stable} algebras, the lower transcendence degree coincides with the Gelfand-Kirillov dimension. We show that the following algebras are $LD$-stable and compute their lower transcendence degrees:  rings of differential operators of  affine
domains,  universal enveloping algebras of finite dimensional Lie superalgebras, symplectic reflection algebras and their spherical subalgebras, finite $W$-algebras of type $A$, generalized Weyl algebras over Noetherian domain (under a mild condition), some quantum groups.
We show that the lower transcendence degree behaves well with respect to the invariants by finite groups, and 
with respect to the Morita equivalence. Applications of these results are given.

\medskip

\noindent {\bf Keywords: Lower transcendence degree, Gelfand-Kirillov transcedence degree, Gelfand-Kirillov dimension, division algebras} 
\medskip

\noindent {\bf 2020 Mathematics Subject Classification:  16P90, 16P50, 16W50, 16W22, 16U20
}
\medskip

\end{abstract}

\section{Introduction}
Throughout the paper all rings will be algebras over an arbitrary base field $\mathsf{k}$ of zero characteristic, unless specified otherwise.  For a prime Goldie ring $A$ we denote by $Q(A)$ its total quotient ring.

The transcendence degree is a very useful and important invariant in the study of commutative rings. For a finitely generated commutative algebra over a
field it coincides with the  Krull dimension. In non-commutative setting different analogs of  transcendence degree were considered in \cite{Gelfand}, \cite{Borho}, \cite{Resco1}, \cite{Resco2}, \cite{Resco3}, \cite{Stafford}, \cite{Schofield}, \cite{Schofield2}, \cite{Zhang}, \cite{Yekutieli}, \cite{Bell}. 
In particular, the classical \emph{Gelfand-Kirillov transcendence degree} (\cite{Gelfand})
  was used  to show  that the skew fields of fractions of the Weyl algebras $A_n(\mathsf{k})$
  are non-isomorphic for different $n$.

Let $A$ be a $\mathsf{k}$-algebra. Recall that the \emph{Gelfand-Kirillov dimension} of $A$, $GK \, A$, is defined as:

\[ sup_V \, lim \, sup_{ n \mapsto \infty} \frac{log (dim \, V^n)}{log \, n}, \]

where $V$ ranges over all the subframes of $A$ (finite dimensional subspaces with unity), and $V^0 = \mathsf{k}, V^i= V. V^{i-1}, i>0$.
Then the  Gelfand-Kirillov transcendence degree of $A$, $GKtdg \, A$, is defined as:

\[ sup_V \, inf_b \,  lim \, sup_{ n \mapsto \infty} \frac{log (dim \,(\mathsf{k}+ bV)^n)}{log \, n}, \]

where $V$ ranges over all the subframes of $A$ and $b$ over all regular elements of the algebra. For any commutative field $\mathsf{k}$, $GKtdg \, \mathsf{k}$  coincides with the usual transcendence degree.

In general, the Gelfand-Kirillov transcendence degree is  difficult to compute. Zhang (\cite{Zhang2}, \cite{Zhang}) developed a general technique that allows to compute $GKtdg \, A$ for several classes of algebras $A$, including certain quantum groups, semiprime Goldie PI-algebras, and certain Artin-Schelter regular algebras.
 The results of \cite{Gelfand}, \cite{Borho} and \cite{Lorenz} were also recovered.

However, in the non-commutative setting the Gelfand-Kirillov transcendence degree has some undesirable aspects.  For instance, the answers to the following very natural questions 
in general are unknown (\cite{Zhang2}, \cite{Zhang}, \cite{Bell}):

\begin{question}
Let $D \subset Q$ be division algebras. 
\begin{enumerate}
\item
 Is it true that $GKtdg \, D \leq GKtdg \, Q$?
\item
Assume $[Q:D] < \infty$, that is, $Q$ has finite dimension over $D$ (as right space). Is it true that  $GKtdg \, D = GKtdg \, Q$?
\end{enumerate}
\end{question}

 Zhang  \cite{Zhang} introduced a  concept of the  \emph{the lower transcendence degree} (henceforth denoted $LD$), for which the questions above have positive answers. 
 This invariant is connected with 
 non-commutative geometry (\cite[Section 9]{Zhang}, \cite{Stafford2}, \cite{RSS}), and ring theory (\cite[Section 9]{Zhang}, \cite{Bell}). 
 There are no known examples of division algebras for which the lower transcendence degree  differs from $GKtdg$.

Following \cite{Zhang} we say that algebra $A$ is {\em $LD$-stable} if $LD \, A = GK \, A$. 
For an $LD$-stable  prime Goldie algebra $A$ we have
 we have 
 $$LD \, A = GKtdg \, A = GKtdg \,Q(A) = LD \, Q(A) = GK \,A=GKtdg \,A_S,$$
  for any denominator set $S$ of regular elements in $A$ (cf. \cite{Zhang} p. 160).

In this paper we explore the concept of $LD$-stability and establish the following result, extending the list of $LD$-stable algebras in  \cite[Theorem 0.5]{Zhang}.

\

\begin{theorem}\label{main-1} The following algebras are $LD$-stable:
\begin{enumerate}
\item\label{item-1}
  The rings of differential operators $D(L)$ and $D(A)$, where $A$ is an affine domain and $L=Q(A)$;
\item\label{item-5}
Finite $W$-algebras of type $A $;
\item\label{item-4}
Generalized Weyl algebras  $D(a, \sigma)$ with  a Noetherian commutative domain $D$ (under a mild condition);
\item\label{item-2}
 The quantum group $U_q(\mathfrak{g})^+$ for a finite dimensional semisimple Lie algebra $\mathfrak{g}$  over an algebraically closed field and for $0 \neq q \in \mathsf{k}$ which is  not root of unity;
\item\label{item-3} $U_q(sl(N))$;
\item \label{item-7}
Universal enveloping algebras of finite dimensional Lie superalgebras;
\item\label{item-6}
Symplectic reflection algebras and their spherical subalgebras.
\end{enumerate}
\end{theorem}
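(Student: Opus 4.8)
The plan is to handle all seven families by one scheme: reduce each algebra $A$ on the list to one whose $LD$-stability is already known --- a finitely generated (or merely finite $GK$) commutative domain, a Weyl algebra $A_n(\mathsf k)$, the enveloping algebra $U(\mathfrak h)$ of a finite dimensional Lie algebra, a quantum affine or quantum Laurent space with no root-of-unity relations, or a semiprime Goldie PI algebra of finite $GK$ --- and then transport stability along one of the operations that $LD$ respects, namely: \emph{(i)} if $A$ is $\mathbb N$-filtered with finite dimensional pieces and $\mathrm{gr}\,A$ is $LD$-stable and Goldie of finite $GK$, then $LD\,A \ge LD\,\mathrm{gr}\,A$, whence $LD\,A = GK\,A = GK\,\mathrm{gr}\,A$ and $A$ is $LD$-stable; \emph{(ii)} along a module-finite ring extension $A \subseteq B$, $GK$ is preserved and so is $LD$-stability, with $LD\,A = LD\,B$; and \emph{(iii)} for a prime Goldie ring, $LD$-stability and the value of $LD$ are unchanged by --- and, under mild hypotheses, descend from --- an Ore localization at regular elements; this is essentially the displayed chain of equalities recalled after the definition of $LD$-stability. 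With these tools in hand, the task in each case is just to exhibit the right filtration, finite extension, or localization.

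Items~\ref{item-7}, \ref{item-5}, \ref{item-6} are handled directly. For the enveloping algebra of a finite dimensional Lie superalgebra $\mathfrak g = \mathfrak g_{\bar 0}\oplus\mathfrak g_{\bar 1}$, $U(\mathfrak g)$ is free of rank $2^{\dim\mathfrak g_{\bar 1}}$ over the $LD$-stable ring $U(\mathfrak g_{\bar 0})$, so mechanism~(ii) applies and $LD\,U(\mathfrak g) = \dim\mathfrak g_{\bar 0}$; here~(i) is unavailable because $\mathrm{gr}\,U(\mathfrak g) = S(\mathfrak g_{\bar 0})\otimes\Lambda(\mathfrak g_{\bar 1})$ is not semiprime. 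For finite $W$-algebras of type $A$ one uses the Kazhdan filtration, whose associated graded is the coordinate ring of the Slodowy slice $e+\mathfrak g^f$ --- an affine space, hence a polynomial ring --- and~(i) gives $LD = \dim\mathfrak g^e$. Symplectic reflection algebras and their spherical subalgebras carry the Etingof--Ginzburg PBW filtration, with $\mathrm{gr}\,(eH_{t,c}e) = \mathsf k[V]^{\Gamma}$, a finitely generated commutative domain, and $\mathrm{gr}\,H_{t,c} = \mathsf k[V]\rtimes\Gamma$, which is prime, Noetherian, and module-finite over the commutative domain $\mathsf k[V]$, hence a prime Goldie PI algebra of finite $GK$; both are $LD$-stable, so~(i) yields $LD\,H_{t,c} = LD\,(eH_{t,c}e) = \dim V$. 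For the quantum groups (items~\ref{item-2}, \ref{item-3}) the De Concini--Kac / Levendorskii--Soibelman filtration has associated graded a quantum affine space $\mathsf k_{\mathbf q}[x_1,\dots,x_N]$ for $U_q(\mathfrak g)^+$ and a quantum affine space extended by a commutative Laurent torus for $U_q(sl(N))$; as $q$ is not a root of unity neither are the structure constants, these graded algebras are $LD$-stable, and~(i) applies. For a generalized Weyl algebra $D(a,\sigma)$ over a Noetherian commutative domain $D$ (item~\ref{item-4}), inverting the $\sigma$-orbit of $a$ produces a skew Laurent ring $D'[X^{\pm1};\sigma]$ over a localization $D'$ of $D$; the mild hypothesis (local algebraicity of $\sigma$ together with a finiteness/stability condition on $D$) makes $GK\,D(a,\sigma) = GK\,D+1$, makes the orbit finite so that $D'$ is a finite localization of $D$ and hence $LD$-stable, and guarantees the relevant sets are Ore and $D(a,\sigma)$ prime Goldie; one then sees $D'[X^{\pm1};\sigma]$ is $LD$-stable (a skew Laurent extension raises $LD$ by exactly one) and descends to $D(a,\sigma)$ by~(iii).

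The genuine obstacle is item~\ref{item-1} for singular $A$. The case of $D(L)$ is routine: by Noether normalization $L$ is finite over a purely transcendental $\mathsf k(x_1,\dots,x_n)$, so $D(L)$ is module-finite over $D(\mathsf k(x_1,\dots,x_n))$, the Ore localization of $A_n(\mathsf k)$ at $\mathsf k[x_1,\dots,x_n]\setminus\{0\}$, which is $LD$-stable by~(iii); then~(ii) gives that $D(L)$ is $LD$-stable with $LD\,D(L) = 2n$. But for $D(A)$ with $A$ an arbitrary affine domain, $D(A)$ need be neither Noetherian nor finitely generated, and, although $\mathrm{gr}\,D(A)$ for the order filtration is a commutative domain of finite $GK$ (it embeds in the polynomial ring $L[\xi_1,\dots,\xi_n]$), it can fail to be finitely generated, so~(i) does not apply directly. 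The plan is to pick $a \in A$ with $A_a$ smooth --- possible in characteristic zero --- so that $D(A)[a^{-1}] = D(A_a)$ is an honest ring of differential operators on a smooth affine variety, whose order filtration has associated graded the finitely generated commutative domain $\mathsf k[T^*\mathrm{Spec}\,A_a]$; then $D(A_a)$ is $LD$-stable by~(i), and one concludes by descending to $D(A)$ along the Ore localization at the powers of $a$. Making this last descent rigorous in the possibly non-Noetherian ring $D(A)$ --- verifying the Ore condition and that $LD$ does not strictly increase under this localization --- is where the real work lies; the rest of Theorem~\ref{main-1} is bookkeeping around the transfer lemmas for $LD$.
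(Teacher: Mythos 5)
Your toolkit (i)--(iii) is exactly the paper's, and several items do go through as you describe: item (6) of the list (Lie superalgebras) is verbatim the paper's argument via \cite[Proposition 3.1]{Zhang}, and item (4) ($U_q(\mathfrak{g})^+$) is the paper's Levendorskii--Soibelman filtration argument. The clearest genuine gap is your treatment of the symplectic reflection algebra $\mathbf{H}_{t,c}$ itself. Mechanism (i) is Zhang's Theorem 4.3, and it requires the associated graded algebra to be a \emph{domain}, not merely a prime Goldie PI algebra of finite $GK$; but $gr\,\mathbf{H}_{t,c}\cong S(V)*\Gamma$ has zero divisors. This is precisely why the paper runs the filtration argument only for the spherical subalgebra (where $gr\,e\mathbf{H}e\cong S(V)^\Gamma$ is a domain) and then reaches $\mathbf{H}_{t,c}$ by an entirely different mechanism absent from your scheme: it proves that $LD$ is invariant under \emph{prime contexts} between prime Goldie rings (hence under Morita equivalence, by reducing both quotient rings to matrix algebras over the same division ring) and then uses Etingof--Ginzburg's double-centralizer and reflexivity theorems to exhibit a prime context between $\mathbf{U}$ and $\mathbf{H}$. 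Without such a transfer principle your argument does not establish $LD$-stability of $\mathbf{H}_{t,c}$. A similar caveat applies to your filtration for $U_q(sl(N))$: the degree-zero piece of a De Concini--Kac type filtration contains the Laurent subalgebra generated by the $K_i^{\pm1}$ and is infinite dimensional, so the finite-dimensional-filtration hypothesis fails; the paper instead invokes the quantum Gelfand--Kirillov conjecture of Futorny--Hartwig for $U_q^{ext}(sl_N)$ and the embedding $U_q(sl_N)\subset U_q^{ext}(sl_N)$.

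Two further points. For $D(A)$ with $A$ singular you correctly identify the strategy but explicitly leave the descent open; the paper closes it with Muhasky's theorem that $Q(D(A))=Q(D(L))$, the observation that $D(A)\subset D(L)$ has finite $GK$ and is therefore automatically an Ore domain (Krause--Lenagan), and Zhang's invariance of $LD$ under Ore localization, after which $LD\,D(A)=LD\,D(L)=2n\geq GK\,D(A)$ forces stability --- no passage through the smooth locus is needed. For generalized Weyl algebras, your claim that the mild hypothesis makes the $\sigma$-orbit of $a$ finite is false: the first Weyl algebra is the rank-one GWA over $\mathsf{k}[h]$ with $\sigma(h)=h-1$ and $a=h$, the condition on finite-dimensional $\sigma$-stable subspaces holds, yet the orbit $\{h-k\}$ is infinite; moreover your construction only addresses rank one. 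The paper instead embeds $D(a,\sigma)$ into $D*\mathbb{Z}^n$ with the same quotient division ring and combines Zhang's lower bound $LD\,(D*\mathbb{Z}^n)\geq GK\,D+n$ with the Mo--Zhang--Zhao computation $GK\,D(a,\sigma)=GK\,D+n$. Finally, your Kazhdan-filtration route to finite $W$-algebras is a legitimate and genuinely different alternative to the paper's argument, which treats them as Galois algebras with $U_S\cong(K*\mathbb{Z}^n)^G$ and uses the Futorny--Molev--Ovsienko formula for $GK$; your route, if the finite-dimensionality of the Kazhdan filtration is checked, would not even need the type-$A$ restriction.
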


Moreover, we compute the lower transcendence degree of all above algebras. 
Our second main result is the following theorem.

\begin{theorem}\label{main-2}
\begin{enumerate}
\item\label{item-21}
Let $A$ be an algebra, $G$ a finite group of automorphisms of $A$. 
If $A$ is a prime Goldie ring such that $A^G$ is also a prime Goldie ring (with some mild assumptions), then $LD \, A = LD \, A^G$. In particular, if $A$ is $LD$-stable then so is $A^G$.
\item\label{item-22}
Let $R$ and $S$ be prime Goldie rings with a prime context between them. Then $LD \, R = LD \, S$. In particular, if $R$ is Morita equivalent to $S$ and $R$ is $LD$-stable, then $S$ is also $LD$-stable.
\end{enumerate}
\end{theorem}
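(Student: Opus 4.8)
The plan is to reduce both statements to facts about the simple Artinian quotient rings, using Zhang's identity $LD\,A = LD\,Q(A)$ for a prime Goldie ring $A$ (the displayed chain of equalities above, from \cite{Zhang}) together with the stability of $LD$ under the relevant operations on such rings.

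I would begin with part~(\ref{item-22}). Since $R$ and $S$ are prime Goldie, $LD\,R = LD\,Q(R)$ and $LD\,S = LD\,Q(S)$, so it is enough to prove $LD\,Q(R) = LD\,Q(S)$. A prime context between $R$ and $S$ supplies bimodules ${}_RM_S$, ${}_SN_R$ and associative balanced pairings $M\otimes_S N\to R$ and $N\otimes_R M\to S$ whose images are nonzero two-sided ideals. The key step is to localise this context at the sets of regular elements: by Goldie's theorem a nonzero ideal of a prime Goldie ring is essential and hence contains a regular element, so after inverting regular elements the images of both pairings become all of $Q(R)$ and all of $Q(S)$; thus the localised context is surjective, hence is a Morita equivalence. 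Therefore $Q(R)$ and $Q(S)$ are Morita equivalent simple Artinian rings, so $Q(R)\cong M_a(D)$ and $Q(S)\cong M_b(D)$ for one and the same division ring $D$. It then remains to use that $LD$ is unchanged under passage to a matrix ring, $LD\,M_a(D) = LD\,D = LD\,M_b(D)$ — equivalently, Morita invariance of $LD$ among simple Artinian rings — which we may invoke from \cite{Zhang} (or argue directly, since matrix rings distort the growth of frames only by a bounded factor, which does not affect $LD$). Chaining the equalities gives $LD\,R = LD\,S$, and since Morita equivalence preserves $GK$ dimension we get $LD\,S = LD\,R = GK\,R = GK\,S$ when $R$ is $LD$-stable, so $S$ is $LD$-stable too.

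For part~(\ref{item-21}) I would again pass to quotient rings. Under the mild hypotheses — in particular $|G|^{-1}\in\mathsf{k}$, automatic in characteristic zero, together with the standing assumption that $A^G$ is prime — the classical structure theory of fixed rings of Goldie rings gives that $A^G$ is prime Goldie with $Q(A^G) = Q(A)^G$, and that $Q(A)$ is a finitely generated module over the simple Artinian ring $Q(A)^G$. Hence $LD\,A = LD\,Q(A)$ and $LD\,A^G = LD\,Q(A)^G$, while $Q(A)^G\subseteq Q(A)$ is a finite extension of simple Artinian rings. The desired equality $LD\,Q(A)^G = LD\,Q(A)$ then follows from the invariance of $LD$ under finite extensions of prime Goldie rings — the Goldie-ring version of Zhang's theorem that $LD\,D = LD\,E$ whenever $E$ is a finite extension of a division ring $D$ — which one obtains by localising to the quotient rings and reducing to the division-ring case. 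Finally, once $LD\,A = LD\,A^G$ is known, $LD$-stability transfers automatically: $LD\,A^G = LD\,A = GK\,A \ge GK\,A^G \ge LD\,A^G$ forces $LD\,A^G = GK\,A^G$.

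The main obstacle I expect is in part~(\ref{item-21}): one has to pin down precisely which hypotheses on $(A,G)$ make the fixed-ring machinery work — that $A^G$ is prime, that $Q(A^G)$ genuinely coincides with $Q(A)^G$, and that $Q(A)$ is module-finite over it — and, on the $LD$ side, one must verify that the finite-extension invariance of $LD$, which Zhang establishes for division algebras, really does carry over to finite extensions of prime Goldie (equivalently, simple Artinian) rings. Everything else is bookkeeping. In part~(\ref{item-22}) the only delicate point is checking that a prime context localises to a surjective Morita context between the Artinian quotient rings, and this is routine given that nonzero ideals of prime Goldie rings contain regular elements.
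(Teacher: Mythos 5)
Your proposal is correct and follows essentially the same route as the paper: both parts are reduced to the simple Artinian quotient rings via $LD\,A = LD\,Q(A)$, part~(2) via Morita equivalence of $Q(R)$ and $Q(S)$ (the paper cites McConnell--Robson 3.6.9 for the localisation of the prime context that you sketch by hand) followed by matrix-ring invariance of $LD$, and part~(1) via $Q(A^G)=Q(A)^G$ with $Q(A)$ module-finite over it. The one point you flag as a potential obstacle --- extending finite-extension invariance of $LD$ from division rings to simple Artinian rings --- is already covered by the cited form of Zhang's Proposition 3.1 (Theorem \ref{main-Zhang}(4): $LD\,A = LD\,B$ when $A$ is a finitely generated right $B$-module and $B$ is Artinian), so no new verification is needed.
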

 
 \
 

Here is an outline of the paper.
In Section 2 we recall the definition of lower transcendence degree and the main facts about this invariant that are going to be used throughout the paper.
In  Section 3 we discuss the lower transcendence degree for rings of differential operators and prove  Theorem \ref{main-1}, (\ref{item-1})  in Proposition \ref{Prop-DiffOp} and Theorem \ref{Thm-DiffOp}. In Corollary \ref{GK} we compute explicitly the Gelfand-Kirillov transcendence degree of the quotient division ring of the ring of differential operators on any affine commutative domain, generalizing the classical result of Gelfand and Kirillov for the Weyl fields. We also give a simple proof of  the embedding of quotient division rings for rings of differential operators (Theorem \ref{embedding}).
In Section 4 we discuss how the lower transcendence degree behaves with respect to the invariants  under the action of finite groups and prove  Theorem \ref{main-2}, (\ref{item-21}) in Theorem \ref{inv4}. Here Proposition \ref{inv1} and Corollary \ref{inv2} are of independent interest.
 In Section 5 we discuss Galois algebras and generalized Weyl algebras. We make an assumption on the embedding into a skew group ring, which is the case in  essentially all known examples of Galois algebras (cf. \cite{Futorny}). We give a lower bound for the Gelfand-Kirillov dimension of Galois algebras (Proposition \ref{Galois0}) and prove  Theorem \ref{main-1}, (\ref{item-5}) in Proposition \ref{Galois3}. Then we discuss generalized Weyl algebras  and prove  Theorem \ref{main-1}, (\ref{item-4}) in Proposition \ref{GWA}.  Finally, we discuss the $LD$-stability of Ore extensions of the polynomial algebra in Proposition \ref{Alev}. 
Section 6 is devoted to the case of quantum groups and Lie superalgebras. We prove  Theorem \ref{main-1}, (\ref{item-2}, \ref{item-3}, \ref{item-7}) in Theorems \ref{quantum1},  \ref{quantum2} and Proposition \ref{superalgebras}.
In Section 7 we  consider the prime context  between two rings, in particular for Morita equivalence  (Proposition \ref{moritaisprime}), and prove  Theorem \ref{main-2}, (\ref{item-22}) in Theorem \ref{primecontexts}.
In Section 8, we consider symplectic reflection algebras. First we show that the spherical subalgebra is $LD$-stable (Theorem \ref{spherical}), and then we exhibit a prime context between the symplectic reflection algebra and its spherical subalgebra (Proposition \ref{primecontext-sympletic}), thus showing that the  symplectic reflection algebra is also $LD$-stable (Theorem \ref{sympletic}). This proves  Theorem \ref{main-1}, (\ref{item-6}).

 \begin{conventions}
All relevant algebra invariants are considered over the fixed basis field $\mathsf{k}$. The same  applies to the tensor product of algebras.
\end{conventions}

\section{Lower transcendence degree} 
 We recall  the main properties of the notion of lower transcendence degree. We refer to \cite{Zhang} for motivations for this  subtle invariant.

Let $A$ be an algebra and $V$ a subframe of it. If for any such $V$ there exists a finite dimensional non-zero subspace $W \subset A$ such that $dim \, VW = dim \, W$, then we define $LD \, A$ as 0. Otherwise, there exists a subframe $V$ such that for every finite dimensional non-zero $W$:

\[ dim \, VW \geq dim \, W + 1. \]

For a real number $d>0$ we say that $V$ satisfies $VDI(A)_d$, {\em the volume difference inequality} (see \cite{Zhang}),  if there exists  $c>0$ such that for every finite dimensional non-zero $W$:

\[ dim \, VW \geq dim \, W + c (\dim \, W)^{(d-1)/d}. \]

If instead, we have:

\[ dim \, VW \geq dim \, W + c \dim \, W, \]

then we say that $V$ satisfies $VDI(A)_\infty$.

\begin{definition}
The lower transcendence degree of an algebra $A$ is either 0 or:

\[ LD \, A = sup_V sup \{d \ | \ VDI(A)_d \, \text{holds for } V \}, \]

where $V$ runs through all subframes of $A$.
\end{definition}

We shall call the \emph{$LD$-degree}  the value of the lower transcendence degree. The following theorem summarizes some of its most important properties.

\begin{theorem} \label{main-Zhang}\cite[Theorem 0.3, Proposition 0.4, Proposition 2.1, Proposition 3.1]{Zhang} 
We have:
\begin{enumerate}
\item If $Q \subset D$ are division algebras such that $[D:Q]$ is finite (dimension as a right space), then $LD \, Q = LD \, D$.
\item If $A$ is any algebra and $S$ any denominator set of regular elements, then $LD \, A = LD \, A_S = LD \, _S A $. In particular, if $A$ is a prime Goldie, then $LD \, A = LD \, Q(A)$.
\item For any algebra $A$, $LD \, A \leq GKtdg \, A \leq GK \, A$; the equality holds if $A$ is PI and prime. In particular, for a commutative field it coincides with the usual transcendence degree.
\item Let $A$, $B$ be prime Goldie algebras. If $B \subset A$ then $LD \, B \leq LD \, A$. If $A$ is a finitely generated right $B$-module and $B$ is Artinian, then $LD \, A = LD \, B$.
\end{enumerate}
\end{theorem}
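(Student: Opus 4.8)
Since Theorem~\ref{main-Zhang} assembles foundational results of Zhang, the plan is to recall the mechanism behind each item rather than to reproduce the arguments verbatim. The common engine is the volume difference inequality together with one analytic comparison lemma: if a subframe $V$ satisfies $VDI(A)_d$ with constant $c$, then taking $W=V^n$ in the inequality gives $f(n+1)\ge f(n)+c\,f(n)^{(d-1)/d}$ for $f(n)=\dim V^n$, and comparing this recursion with the differential equation $g'=c\,g^{(d-1)/d}$ (whose solution grows like $(ct/d)^d$) yields $f(n)\ge c'n^{d}$ for some $c'>0$ and all large $n$. Thus the mere existence of a subframe with $VDI(A)_d$ forces growth of degree at least $d$; conversely $LD\,A=0$ precisely when every subframe $V$ fixes some nonzero finite-dimensional subspace, $VW=W$.

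For item (3), $GKtdg\,A\le GK\,A$ is immediate by taking $b=1$ inside the infimum. For $LD\,A\le GKtdg\,A$, fix a subframe $V$ with $VDI(A)_d$ and an arbitrary regular $b$, and test the inequality on $W=(\mathsf{k}+bV)^n$: from the identity $(bV)(\mathsf{k}+bV)^n=b\bigl(V(\mathsf{k}+bV)^n\bigr)$ and the injectivity of left multiplication by $b$ one gets $\dim(\mathsf{k}+bV)^{n+1}\ge\dim V(\mathsf{k}+bV)^n\ge\dim(\mathsf{k}+bV)^n+c\bigl(\dim(\mathsf{k}+bV)^n\bigr)^{(d-1)/d}$, so the analytic lemma gives $\dim(\mathsf{k}+bV)^n\ge c'n^{d}$ for every $b$, hence $\inf_b\varlimsup\log\dim(\mathsf{k}+bV)^n/\log n\ge d$ and, after the supremum over $V$, $LD\,A\le GKtdg\,A$. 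In the PI prime case Posner's theorem presents $Q(A)$ as a finite-dimensional division algebra over its centre $F$, and items (1) and (2) reduce the computation to the field $F$, where $GKtdg\,F=\operatorname{trdeg}_{\mathsf{k}}F$ is classical and equals $LD\,F$ because a commutative field is PI.

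Items (1), (2), (4) are invariance statements and rest on controlling $\mathsf{k}$-dimensions of subspaces across the extension in question. For a denominator set $S$ (item (2)), a subframe of $A$ is a subframe of $A_S$, giving $LD\,A\le LD\,A_S$; conversely one clears denominators: a regular $s\in S$ can be chosen so that multiplication by $s$ carries a given subframe $V\subset A_S$ into $A$, and since left and right multiplication by regular elements are dimension-preserving bijections, $VDI(A_S)_d$ for $V$ transfers to $VDI(A)_d$ for the resulting subframe, with the exponent $d$ intact and only the constant $c$ degraded; applied to $Q(A)$ this yields $LD\,A=LD\,Q(A)$. For the module-finite extension $Q\subseteq D$ of division algebras (item (1)) and the Artinian case of item (4), the core is Zhang's comparison showing that a finitely generated module extension of a simple Artinian ring preserves $LD$: one ingredient is $LD\,M_m(B)=LD\,B$, obtained by translating subframes of $M_m(B)$ to subframes of $B$ via the matrix units, and the other is the reduction of a general such extension to a matrix situation through $D\hookrightarrow\operatorname{End}_Q(D)$ with $D$ viewed as a right $Q$-module. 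Finally, $LD\,B\le LD\,A$ for prime Goldie $B\subseteq A$ is deduced by passing to quotient rings and a common Ore localization and invoking item (2).

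The step I expect to be the main obstacle is exactly the dimension bookkeeping in the module-finite reduction: one must show that translating a finite-dimensional subspace of the larger ring into a subspace of the smaller one -- through a fixed finite generating set or through the matrix units -- inflates its $\mathsf{k}$-dimension by at most a bounded factor, so that the volume difference inequality survives with its exponent $d$ unchanged and only the multiplicative constant $c$ affected; a naive count does not immediately produce an inequality of the form $\dim\tilde V\tilde W\ge\dim\tilde W+c'(\dim\tilde W)^{(d-1)/d}$, and Zhang's argument requires a more careful choice of the comparison subframe. A secondary subtlety is that $VDI$ is not manifestly left--right symmetric, so the passage through $\operatorname{End}_Q(D)$ must be handled with care.
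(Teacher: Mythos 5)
The paper itself contains no proof of Theorem~\ref{main-Zhang}: the statement is imported verbatim from Zhang's article with the citations given in its header, so there is no in-paper argument to compare yours against. Measured against Zhang's original proofs, your sketch identifies the right machinery --- the volume difference inequality, the discrete-versus-ODE comparison yielding $\dim V^n\ge c'n^d$, denominator clearing via the dimension-preserving action of regular elements, and the matrix-unit/endomorphism-ring reduction for module-finite extensions --- and your derivation of $LD\,A\le GKtdg\,A$ by testing $VDI$ on $W=(\mathsf{k}+bV)^n$ is exactly the standard one.

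Two steps are too quick to stand as written. First, in item (2) the inequality $LD\,A\le LD\,A_S$ does \emph{not} follow from the observation that a subframe of $A$ is a subframe of $A_S$: the test spaces $W$ in $VDI(A_S)_d$ range over all finite-dimensional subspaces of $A_S$, a strictly larger collection than those of $A$, so this direction also requires clearing denominators (choose $s\in S$ with $Ws\subseteq A$ and use that right multiplication by $s$ is a dimension-preserving injection commuting with left multiplication by $V$); your ``conversely'' assigns the denominator-clearing to only one of the two directions that need it. Second, the first half of item (4) --- monotonicity $LD\,B\le LD\,A$ for prime Goldie $B\subseteq A$ --- is not obtained by ``passing to quotient rings and a common Ore localization and invoking item (2)'': an embedding $Q(B)\hookrightarrow Q(A)$ is not automatic, and even for division algebras $Q\subseteq D$ the fact that a subframe $V\subseteq Q$ satisfying $VDI(Q)_d$ still satisfies the inequality against subspaces $W\subseteq D$ is precisely the non-trivial content of Zhang's Theorem~0.3(1); it is proved by decomposing $W$ along a suitably chosen left $Q$-basis of $QW$, not by localization. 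These are gaps in the sketch rather than errors of strategy, but as written those two reductions would not go through.
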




\section{$LD$-stability for rings of differential operators}
In this section we will discuss rings of differential operators and  embeddings of their quotient division rings.

\begin{definition}
Let $A$ be a commutative $\mathsf{k}$-algebra. Define inductively $D(A)_0=A$ and $D(A)_n = \{ d \in End_\mathsf{k} \, A | [d,a] \in D(A)_{n-1}, \forall a \in A \}$, and finally $D(A)=\bigcup_{i=0}^\infty D(A)_i$. This way we obtain a natural structure of filtered associative $\mathsf{k}$-algebra. If $A$ is affine and regular, $D(A)$ coincides with the subring of $End_\mathsf{k} \, A$ generated by $A$ and the module $Der_\mathsf{k} \, A$ of $\mathsf{k}$-derivations. In the later case, it is well known that the ring is a simple Noetherian domain (\cite[Chapter 15]{McConnell}).
\end{definition}

In particular,  the Wey algebras $A_n(\mathsf{k})$ are the rings of differential operators on the polynomial algebras, with generators $x_1, \ldots, x_n, y_1, \ldots , y_n$ and relations $[x_i, x_j] = [y_i, y_j] =0$, $[y_i, x_j]=\delta_{ij}, i,j=1,\ldots,n$.

\

The following simple lemma will be used frequently.

\

\begin{lemma} \label{main-lemma}
If $A$ and $B$ are two prime Goldie rings,  $A \subset B$, $LD \, A = LD \, B$ and $B$ is $LD$-stable, then so is $A$.

\begin{proof}
By Theorem \ref{main-Zhang}(3)(4), $LD \, A \leq GK \, A \leq GK \, B = LD \, B = LD \, A$.
\end{proof}
\end{lemma}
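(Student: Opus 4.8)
The plan is to prove this by a short squeezing argument that bundles together the universal comparison $LD \le GKtdg \le GK$ with the monotonicity of Gelfand--Kirillov dimension under subalgebras. First I would invoke Theorem~\ref{main-Zhang}(3), which gives the universal inequality $LD \, A \le GK \, A$, valid for every algebra. Next, since $A \subseteq B$, every subframe of $A$ is in particular a subframe of $B$, so the supremum defining $GK \, A$ is taken over a subfamily of the one defining $GK \, B$; hence $GK \, A \le GK \, B$. Now use the two remaining hypotheses: $B$ is $LD$-stable, so $GK \, B = LD \, B$, and $LD \, B = LD \, A$ by assumption.

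Chaining these relations yields
\[ LD \, A \le GK \, A \le GK \, B = LD \, B = LD \, A, \]
so all the displayed inequalities are in fact equalities. In particular $LD \, A = GK \, A$, which is precisely the statement that $A$ is $LD$-stable.

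I do not expect any real obstacle here: the argument is a two-line sandwich, and all the genuinely hard analytic content — the behaviour of the volume difference inequalities under localization and finite extensions — is already packaged into Theorem~\ref{main-Zhang}. The only mild points of care are (i) confirming that $GK$ is monotone under passing to a subalgebra (standard, and used only via the subframe remark above), and (ii) keeping straight which step is the universal inequality $LD \le GK$ and which is the hypothesis $LD \, A = LD \, B$. No explicit subframe or $VDI$ estimate is required. It is worth noting that this lemma is exactly the tool one wants for Theorem~\ref{main-1}: to establish $LD$-stability of a prime Goldie algebra $A$ it suffices to exhibit a larger prime Goldie $LD$-stable algebra $B \supseteq A$ with $LD \, A = LD \, B$, the latter equality itself typically coming from Theorem~\ref{main-Zhang}(1)--(2).
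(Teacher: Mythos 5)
Your proof is correct and follows essentially the same squeezing argument as the paper: both chain $LD\,A \le GK\,A \le GK\,B = LD\,B = LD\,A$ and conclude that all inequalities are equalities. The only cosmetic difference is that you justify $GK\,A \le GK\,B$ directly by monotonicity of $GK$ under subalgebras rather than by citing Theorem~\ref{main-Zhang}(4), which is a perfectly valid (and arguably cleaner) substitution.
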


\

Let $B$ be an affine commutative domain over $\mathsf{k}$ (not necessarily regular) with the field of fractions $L$  and $n = trdeg \, B$.

\

\begin{proposition} \label{Prop-DiffOp}
$D(L)$ is $LD$-stable with the $LD$-degree $2n$.
\begin{proof}
By \cite[15.2.5, 15.3.10]{McConnell}, $D(L)$ is an Ore domain with the $GK$ dimension $2n$.
 It contains a copy of the Weyl Algebra $A_n(\mathsf{k})$. Since the Weyl Algebra is $LD$-stable (\cite[Theorem 0.5]{Zhang}), we have by Theorem \ref{main-Zhang}(3)(4)
  $$2n = GK \,  A_n(\mathsf{k}) =  LD \, A_n(\mathsf{k}) \leq LD \, D(L) \leq GK \, D(L) = 2n.$$
\end{proof}
\end{proposition}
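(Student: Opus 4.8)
The plan is to sandwich $D(L)$ between two algebras whose Gelfand--Kirillov dimension and lower transcendence degree are both known to equal $2n$, and then invoke the monotonicity of $LD$ and the inequality $LD \leq GK$. Concretely, the key input is that $D(L)$ is an Ore domain of $GK$-dimension $2n$ (this is \cite[15.2.5, 15.3.10]{McConnell}), so that $Q(D(L))$ makes sense and $LD \, D(L) \leq GK \, D(L) = 2n$ by Theorem \ref{main-Zhang}(3). For the reverse inequality I would exhibit a copy of the Weyl algebra $A_n(\mathsf{k})$ inside $D(L)$: indeed $L$ contains the polynomial ring $\mathsf{k}[x_1,\dots,x_n]$ on a transcendence basis, the partial derivatives extend to derivations of $L$, and these sit inside $D(L)$, so $A_n(\mathsf{k}) \subseteq D(L)$.

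From there the argument is immediate. Since $A_n(\mathsf{k})$ is $LD$-stable by \cite[Theorem 0.5]{Zhang}, we have $LD \, A_n(\mathsf{k}) = GK \, A_n(\mathsf{k}) = 2n$. Both $A_n(\mathsf{k})$ and $D(L)$ are prime (in fact $D(L)$ is a domain, hence prime) Goldie rings, so Theorem \ref{main-Zhang}(4) gives $LD \, A_n(\mathsf{k}) \leq LD \, D(L)$. Chaining the inequalities,
\[
2n = GK \, A_n(\mathsf{k}) = LD \, A_n(\mathsf{k}) \leq LD \, D(L) \leq GK \, D(L) = 2n,
\]
so all terms are equal; in particular $LD \, D(L) = GK \, D(L) = 2n$, i.e.\ $D(L)$ is $LD$-stable with $LD$-degree $2n$.

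The only real content to check is that the various hypotheses of Theorem \ref{main-Zhang} genuinely apply: that $D(L)$ is prime Goldie (immediate since it is an Ore domain), that $A_n(\mathsf{k}) \subseteq D(L)$ as a subalgebra of prime Goldie rings, and the citation for $GK \, D(L) = 2n$. I do not expect a genuine obstacle here — the work is entirely in the cited structural facts about $D(L)$ and the known $LD$-stability of the Weyl algebra; the proposition is essentially a bookkeeping exercise combining them. If anything needs care, it is making sure the embedding $A_n(\mathsf{k}) \hookrightarrow D(L)$ is literal (one picks a separating transcendence basis of $L/\mathsf{k}$, which exists in characteristic zero) rather than just an embedding up to localization, but even a localization would suffice by Theorem \ref{main-Zhang}(2).
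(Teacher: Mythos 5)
Your proposal is correct and follows exactly the paper's argument: cite \cite[15.2.5, 15.3.10]{McConnell} for $D(L)$ being an Ore domain of $GK$-dimension $2n$, embed $A_n(\mathsf{k})$ into $D(L)$, and sandwich $LD \, D(L)$ between $LD \, A_n(\mathsf{k}) = 2n$ and $GK \, D(L) = 2n$ via Theorem \ref{main-Zhang}(3)(4). The extra detail you give on realizing the embedding via a transcendence basis is a reasonable elaboration of what the paper leaves implicit.
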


\

\begin{theorem} \label{Thm-DiffOp}
$D(B)$ is $LD$-stable and the $LD$-degree is $2n$.

\begin{proof}
We can realize $D(B)$ as a subset of $D(L)$ in the following way (\cite[15.5.5(iii)]{McConnell}):
\[ D(B) = \{ d \in D(L)|d(B) \subset B \}. \]
The ring
$D(L)$ is a non-commutative domain with finite Gelfand-Kirillov dimension. Since $D(B)$ is a subring of $D(L)$, the same  holds for it. Hence, $D(B)$ does not contain a subalgebra isomorphic to the free associative algebra in two variables. It follows by \cite[Prop. 4.13]{Krause} that $D(B)$ is an Ore domain.
By \cite[Proposition 1.8]{Muhasky}, $Q(D(B))=Q(D(L))$, and hence $LD \, D(B) = LD \, D(L)$ by Theorem \ref{main-Zhang}(2).  Since $D(L)$ is $LD$-stable by Proposition \ref{Prop-DiffOp} then $D(B)$ is $LD$-stable 
 by Lemma \ref{main-lemma}.
\end{proof}
\end{theorem}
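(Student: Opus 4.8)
The plan is to realize $D(B)$ inside $D(L)$ and transfer $LD$-stability along the chain of inclusions, using the Ore condition and localization-invariance of $LD$. First I would recall from \cite[15.5.5(iii)]{McConnell} that $D(B) = \{d \in D(L) \mid d(B) \subseteq B\}$, which exhibits $D(B)$ as a subalgebra of $D(L)$. Since $D(L)$ is a domain of finite Gelfand-Kirillov dimension (Proposition \ref{Prop-DiffOp}), so is any subalgebra, in particular $D(B)$; hence $D(B)$ cannot contain a free subalgebra on two generators, and by \cite[Prop. 4.13]{Krause} $D(B)$ satisfies the Ore condition and is an Ore domain. In particular $D(B)$ is prime Goldie and $Q(D(B))$ embeds in $Q(D(L))$.

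The next step is to identify the two quotient division rings. By \cite[Proposition 1.8]{Muhasky} one has $Q(D(B)) = Q(D(L))$ — intuitively, any element of $D(L)$ can be cleared into $D(B)$ after multiplying by a suitable nonzero element of $B$, so localizing $D(B)$ at the nonzero elements of $B$ already yields all of $D(L)$ (and then one localizes further). Granting this equality, Theorem \ref{main-Zhang}(2) gives $LD\, D(B) = LD\, Q(D(B)) = LD\, Q(D(L)) = LD\, D(L)$. Since $D(L)$ is $LD$-stable with $LD$-degree $2n$ by Proposition \ref{Prop-DiffOp}, and $D(B) \subseteq D(L)$ are both prime Goldie with equal $LD$, Lemma \ref{main-lemma} immediately yields that $D(B)$ is $LD$-stable; and $LD\, D(B) = LD\, D(L) = 2n$ gives the $LD$-degree.

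I expect the only genuinely substantive point to be the identification $Q(D(B)) = Q(D(L))$; everything else is a routine invocation of the quoted results. Here the subtlety is that $B$ need not be regular, so $D(B)$ is not generated by $B$ and its derivations in the naive way, and one must use the abstract characterization of $D(B)$ as differential operators on $L$ preserving $B$ together with the fact (Muhasky) that these become cofinal under localization at $B \setminus \{0\}$. Once that is in hand, the argument is purely formal: finite $GK$-dimension forces the Ore condition, localization preserves $LD$, and sandwiching $D(B)$ between $A_n(\mathsf{k})$'s image and $D(L)$ pins down the value $2n$.
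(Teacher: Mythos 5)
Your proposal matches the paper's proof step for step: the realization $D(B)=\{d\in D(L)\mid d(B)\subseteq B\}$, the Ore property via finite $GK$-dimension and \cite[Prop.~4.13]{Krause}, the identification $Q(D(B))=Q(D(L))$ from \cite[Proposition~1.8]{Muhasky}, and the transfer of $LD$-stability via Theorem~\ref{main-Zhang}(2) and Lemma~\ref{main-lemma}. The argument is correct and essentially identical to the one in the paper.
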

\

With this we have shown  Theorem \ref{main-1}(\ref{item-1}). It has the following important corollary:
\

\begin{corol} \label{GK}
$GKtdg \, Q(D(B)) = 2n$.
\end{corol}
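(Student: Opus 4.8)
The plan is to read off the statement as an immediate consequence of what has already been established about $D(B)$ in Theorem~\ref{Thm-DiffOp}, together with the general comparison inequalities collected in Theorem~\ref{main-Zhang}. Since $B$ is an affine commutative domain of transcendence degree $n$, the ring $D(B)$ is a prime (indeed simple in the regular case, but in any case a prime Noetherian) Goldie domain, and Theorem~\ref{Thm-DiffOp} tells us both that $GK\,D(B)=2n$ and that $D(B)$ is $LD$-stable, i.e. $LD\,D(B)=2n$. The goal is to pin down the intermediate invariant $GKtdg\,Q(D(B))$ and show it equals $2n$ as well.

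First I would invoke the sandwich from Theorem~\ref{main-Zhang}(3), applied to the division algebra $Q(D(B))$: we always have
\[
LD\,Q(D(B)) \;\leq\; GKtdg\,Q(D(B)) \;\leq\; GK\,Q(D(B)).
\]
So it suffices to squeeze the two outer terms to $2n$. For the left end, $D(B)$ is a prime Goldie ring (it is an Ore domain by the argument in Theorem~\ref{Thm-DiffOp}), so by Theorem~\ref{main-Zhang}(2) we get $LD\,Q(D(B)) = LD\,D(B) = 2n$, using $LD$-stability of $D(B)$ from Theorem~\ref{Thm-DiffOp}. This already gives the lower bound $GKtdg\,Q(D(B)) \geq 2n$.

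For the upper bound I would use the identification $Q(D(B)) = Q(D(L))$ established in the proof of Theorem~\ref{Thm-DiffOp} (via \cite[Proposition~1.8]{Muhasky}), so it is enough to bound $GKtdg\,Q(D(L))$. Here $D(L)$ is the ring of differential operators on the field $L$, which is an Ore domain with $GK\,D(L) = 2n$ by \cite[15.2.5, 15.3.10]{McConnell}. One then applies the general fact, recorded in the introduction (cf.\ \cite{Zhang}, p.~160, and summarized in the displayed chain of equalities for $LD$-stable prime Goldie algebras), that for an $LD$-stable prime Goldie algebra $A$ one has $GKtdg\,Q(A) = GKtdg\,A = GK\,A$; applying this to $A = D(L)$, which is $LD$-stable by Proposition~\ref{Prop-DiffOp}, yields $GKtdg\,Q(D(L)) = GK\,D(L) = 2n$. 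Combining, $GKtdg\,Q(D(B)) = GKtdg\,Q(D(L)) = 2n$.

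The only genuinely delicate point is making sure the chain of equalities $LD\,A = GKtdg\,A = GKtdg\,Q(A) = GK\,A$ is correctly available for $LD$-stable prime Goldie algebras: it rests on $GKtdg$ being squeezed between $LD$ and $GK$ (the first two parts of Theorem~\ref{main-Zhang}) and on the behaviour of $GKtdg$ under localization, which the paper attributes to \cite{Zhang}. Since both $D(L)$ and $D(B)$ have already been shown to be $LD$-stable with $GK$-dimension $2n$, and since $Q(D(B)) = Q(D(L))$, there is essentially no remaining obstacle — the corollary is a bookkeeping exercise assembling Proposition~\ref{Prop-DiffOp}, Theorem~\ref{Thm-DiffOp}, and Theorem~\ref{main-Zhang}. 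I would present it in two lines: the lower bound from $LD\,Q(D(B)) = 2n$ and the upper bound from $GK\,Q(D(L)) = 2n$ via the $LD$-stability chain, both funneled through $GKtdg$.
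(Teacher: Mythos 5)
Your argument is correct and is essentially the paper's own: the corollary is stated there as an immediate consequence of Theorem~\ref{Thm-DiffOp} together with the chain $LD\,A = GKtdg\,A = GKtdg\,Q(A) = GK\,A$ for $LD$-stable prime Goldie algebras quoted in the introduction from \cite{Zhang}. Your detour through $Q(D(L))$ for the upper bound is harmless but unnecessary, since the same chain applied directly to $A = D(B)$ already gives $GKtdg\,Q(D(B)) = GK\,D(B) = 2n$.
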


This is a  generalization of the well known fact   that $GKtdg \, Q(A_n(\mathsf{k})) = 2n$ \cite{Gelfand}.

Now we are going to discuss the question of embeddings of quotient division rings of algebras of differential operators. Recall the following fact obtained, by different reasonings, by Joseph (\cite{Joseph}), Resco (\cite{Resco1}), and Bavula (\cite{Bavula}):

\begin{itemize}
\item
Let $X$ and $Y$ be two affine irreducible smooth varieties over an algebraically closed field. Denote $D(X):= D(\mathcal{O}(X))$ and similarly $D(Y)$, $\mathcal{O}(,)$ the ring of regular functions. If $dim \, X > dim \, Y$, then there is no $\mathsf{k}$-algebra embedding of $Q(D(X))$ into $Q(D(Y))$.
\end{itemize}

Applying Theorem \ref{Thm-DiffOp} and Theorem \ref{main-Zhang} we immediately obtain the following generalization of this fact.

\

\begin{theorem} \label{embedding}
Let $A$ and $B$ be affine integral domains with transcendence degrees $n$ and $m$, respectively. If $n>m$ then there is no $\mathsf{k}$-algebra embedding of $Q(D(A))$ into $Q(D(B))$.
\end{theorem}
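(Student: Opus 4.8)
The plan is to combine Theorem \ref{Thm-DiffOp} with the monotonicity of $LD$ under inclusion of division algebras (Theorem \ref{main-Zhang}, parts (3) and (4)) and the fact that $LD$ coincides with $GKtdg$ for these quotient division rings. By Theorem \ref{Thm-DiffOp} we know $D(A)$ is $LD$-stable of $LD$-degree $n$ doubled, i.e. $LD\, D(A)=2n$, and likewise $LD\, D(B)=2m$; moreover by Theorem \ref{main-Zhang}(2) we have $LD\, Q(D(A))=LD\, D(A)=2n$ and $LD\, Q(D(B))=LD\, D(B)=2m$. So the two quotient division rings have distinct $LD$-degrees whenever $n>m$.

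The key step is then an argument by contradiction. Suppose there were a $\mathsf{k}$-algebra embedding $\varphi\colon Q(D(A))\hookrightarrow Q(D(B))$. Both rings are division algebras, so the image $\varphi(Q(D(A)))$ is a division subalgebra of $Q(D(B))$. By Theorem \ref{main-Zhang}(4) (applied to the inclusion of the prime Goldie ring $\varphi(Q(D(A)))$ inside the prime Goldie ring $Q(D(B))$), we get
\[ LD\, Q(D(A)) = LD\, \varphi(Q(D(A))) \leq LD\, Q(D(B)). \]
But the left-hand side equals $2n$ and the right-hand side equals $2m$, so $2n\le 2m$, contradicting $n>m$. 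Hence no such embedding exists.

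I expect no genuine obstacle here: the substance of the theorem has already been absorbed into Theorem \ref{Thm-DiffOp} and the general machinery of Section 2. The only point requiring a line of care is the invariance of $LD$ under the isomorphism $Q(D(A))\cong\varphi(Q(D(A)))$ — this is immediate since $LD$ is defined purely in terms of the algebra structure (subframes and the volume difference inequality), so it is preserved by $\mathsf{k}$-algebra isomorphisms. One could equivalently phrase the whole argument using $GKtdg$ in place of $LD$ via Corollary \ref{GK}, since $GKtdg\, Q(D(A))=2n>2m=GKtdg\, Q(D(B))$ and $GKtdg$ is also monotone under division-subalgebra inclusion, but routing it through $LD$ is cleanest given what is stated in the excerpt.
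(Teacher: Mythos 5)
Your argument is correct and is exactly the paper's intended proof: the paper derives Theorem \ref{embedding} by ``applying Theorem \ref{Thm-DiffOp} and Theorem \ref{main-Zhang}'', i.e.\ $LD\,Q(D(A))=2n$, $LD\,Q(D(B))=2m$, and monotonicity of $LD$ under inclusion of prime Goldie rings forces $2n\le 2m$ if an embedding existed. Your extra remark that $LD$ is preserved by $\mathsf{k}$-algebra isomorphisms is a fair point of care, and the whole write-up matches the paper's route.
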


\


\section{$LD$-stability for rings of invariants}

In this section we discuss  the lower transcendence degree of the subalgebra of  invariants  under the action of a finite group. 
We start with the following observation.

\

\begin{proposition} \label{inv1}
Let $A$ be an Ore domain and $G$ a finite group of automorphisms of $A$. Then $LD \, A^G = LD \, A$.
\end{proposition}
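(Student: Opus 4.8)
The plan is to realize both $A$ and $A^G$ inside a common division ring and then invoke the finite-index property of $LD$ from Theorem~\ref{main-Zhang}(1). Since $A$ is an Ore domain, it has a quotient division ring $Q=Q(A)$, and the $G$-action on $A$ extends to a $G$-action on $Q$ (a ring automorphism sends regular elements to regular elements, hence descends to the Ore localization). Now $A^G$ is a domain, and I would first check it is an Ore domain: the quickest route is to note $A^G \subseteq Q^G$, observe that $Q^G$ is a division ring (for $0\neq x\in Q^G$, the inverse $x^{-1}\in Q$ is fixed by $G$ since $g(x^{-1})=g(x)^{-1}=x^{-1}$), and that every element of $Q^G$ is a quotient of elements of $A^G$: given $x\in Q^G$, write $x=as^{-1}$ with $a,s\in A$, and then by a standard averaging/common-denominator trick (replace $s$ by $\prod_{g\in G}g(s)$, which is $G$-invariant) one gets $x = a' (s')^{-1}$ with $s'\in A^G$ and $a' = x s' \in Q^G \cap A = A^G$ after clearing. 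Hence $Q(A^G) = Q^G$.

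The key step is then the chain
\[
Q^G \;\subseteq\; Q \;=\; Q(A),
\]
and the claim that $[Q : Q^G] < \infty$ as a right $Q^G$-vector space. This is the classical statement (going back to the Galois theory of division rings, e.g. the results used in Montgomery's work on fixed rings, or directly via the fact that $Q\ast G$ is a simple Artinian ring of which $Q$ is a finitely generated module, with $Q^G$ Morita equivalent to a corner) that for a finite group $G$ of automorphisms of a division ring $Q$, the extension $Q^G \subseteq Q$ has dimension at most $|G|$. I would cite this rather than reprove it. Granting it, Theorem~\ref{main-Zhang}(1) gives $LD\,Q^G = LD\,Q$ immediately.

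Finally, assembling the pieces: by Theorem~\ref{main-Zhang}(2), $LD\,A = LD\,Q(A) = LD\,Q$ and $LD\,A^G = LD\,Q(A^G) = LD\,Q^G$, and by the previous paragraph $LD\,Q = LD\,Q^G$. Therefore $LD\,A^G = LD\,A$. The main obstacle — the only non-formal point — is establishing $[Q : Q^G] \leq |G|$ (equivalently, that $Q$ is finite-dimensional over $Q^G$); everything else is a routine matter of extending the $G$-action to $Q$ and identifying $Q(A^G)$ with $Q^G$ via the averaging trick. One should also be slightly careful that $A^G$ is genuinely prime Goldie so that Theorem~\ref{main-Zhang}(2) applies to it; this follows once $Q(A^G)=Q^G$ is a division ring, since a ring with a division ring of fractions is a prime (left and right) Ore domain, hence prime Goldie.
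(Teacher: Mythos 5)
Your proposal follows essentially the same route as the paper: identify $Q(A^G)$ with $Q^G$, invoke $[Q:Q^G]\le |G|$ (the Noncommutative Artin Lemma, \cite[Lemma 2.18]{Montgomery}), and conclude via Theorem~\ref{main-Zhang}(1) and (2); the paper simply cites \cite{Faith} for $Q(A^G)=Q^G$ where you sketch an argument. One small caution: the naive common-denominator step of replacing $s$ by $\prod_{g\in G} g(s)$ is not obviously $G$-invariant in a noncommutative ring (applying $h\in G$ permutes the factors but changes their order), which is precisely why this identification is a nontrivial theorem of Faith rather than a routine averaging trick.
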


\begin{proof}
Set $Q:=Q(A)$. Then  $Q(A^G)=Q^G$ (\cite{Faith}), and we have 
$LD \, A= LD \, Q$ and $LD \, A^G =LD \, Q^G$ by Theorem \ref{main-Zhang}(2). 
Also,  $[Q : Q^G ] \leq |G|$ by the Noncommutative Artin's Lemma (\cite[Lemma 2.18]{Montgomery}), so we get $LD(Q)=LD(Q^G)$ by Theorem \ref{main-Zhang}(1),  and the result follows.
\end{proof}

\

Applying Proposition \ref{inv1} and Lemma \ref{main-lemma} we obtain

\begin{corol} \label{inv2}
Let $A$ be an Ore domain and $G$ a finite group of automorphisms of $A$. If $A$ is $LD$-stable then so is $A^G$.
\end{corol}

\

A similar result holds in more general situations. We recall the following fact 

\begin{proposition} \label{inv3}\cite[Theorem 1.15, Corollary 5.9]{Montgomery} 
If $R$ is a semisimple Artinian algebra and $G$  a finite group of automorphisms of $R$, then $R^G$ is also semisimple Artinian. Moreover, $R$ is a finitely generated left and right module over $R^G$.
\end{proposition}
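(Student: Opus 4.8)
The plan is to realise $R^G$ as a corner ring of the skew group ring and exploit the Maschke phenomenon. First I would form $T = R \ast G = \bigoplus_{g \in G} Rg$, with multiplication $(rg)(sh) = r\,g(s)\,gh$; it is a free $R$-module of rank $|G|$, hence Artinian. Since $\operatorname{char}\mathsf{k} = 0$, the order $|G|$ is invertible in $\mathsf{k}$, and the standard averaging argument applies to $T$-modules: given a short exact sequence of $T$-modules split over $R$ by a projection $p$, the map $\frac{1}{|G|}\sum_{g \in G} g\,p\,g^{-1}$ is a $T$-linear splitting. Thus every $T$-module is semisimple, so $T = R \ast G$ is semisimple Artinian.

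Next I would introduce the averaging idempotent $e = \frac{1}{|G|}\sum_{g \in G} g \in T$. A one-line check gives $ge = eg = e$ for all $g$, hence $e^2 = e$, and $e \neq 0$ because its coefficient at the identity of $G$ is $|G|^{-1}$. I claim the corner ring $eTe$ is isomorphic to $R^G$: the map $r \mapsto re = er$ is a ring homomorphism $R^G \to eTe$ (using $g(r) = r$ for $r \in R^G$), it is injective (as $re = 0$ forces $r = 0$ in $\bigoplus_g Rg$), and it is surjective because $e(sg)e = ese = \bigl(\tfrac{1}{|G|}\sum_h h(s)\bigr)e$ for every $s \in R$, $g \in G$. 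Since a corner $eTe$ of a semisimple Artinian ring $T$ is again semisimple Artinian, this yields the first assertion, that $R^G$ is semisimple Artinian.

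For the module-finiteness I would identify $R$ with the left ideal $Te$ via $r \mapsto re$ (injective as above). This identification carries the left $T$-module structure on $R$ --- left multiplication by $R$ together with the action of $G$ by automorphisms --- to left multiplication inside $T$, and it carries the natural right $R^G$-action on $R$ to the right $eTe$-action on $Te$; so $R \cong Te$ as an $(R\ast G,\,R^G)$-bimodule. Now $Te$ is a direct summand of $_T T$, hence has finite length over the semisimple Artinian ring $T$; writing $T = \prod_i M_{k_i}(D_i)$ by Artin--Wedderburn, each nonzero factor $M_{k_i}(D_i)e_i$ of $Te$ is finitely generated as a right module over the simple Artinian ring $e_i M_{k_i}(D_i)e_i$, so $Te$ is finitely generated as a right $eTe \cong R^G$-module, i.e.\ $R$ is finitely generated as a right $R^G$-module. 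The mirror argument, using the right ideal $eT$ (on which $R^G \cong eTe$ acts from the left), shows $R$ is finitely generated as a left $R^G$-module.

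The main obstacle is precisely this last step. While $Te$ is obviously finitely generated over the large ring $T = R \ast G$, it is only the semisimplicity of $T$ --- a consequence of $\operatorname{char}\mathsf{k} = 0$ --- that keeps it finitely generated after restricting scalars to the much smaller corner $eTe \cong R^G$; over a general Artinian ring $T$, a module need not be finitely generated over $eTe$. Everything else reduces to bookkeeping in the skew group ring.
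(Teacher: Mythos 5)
Your argument is correct. Note first that the paper does not prove this proposition at all: it is imported verbatim from Montgomery's monograph (Theorem 1.15 and Corollary 5.9 there), so there is no in-paper proof to compare against. What you have written is a complete, self-contained derivation along the standard lines that underlie the cited results: Maschke's theorem for the skew group ring $T=R\ast G$ (valid here because the paper fixes $\operatorname{char}\mathsf{k}=0$, so $|G|^{-1}\in\mathsf{k}$), the identification $R^G\cong eTe$ via the averaging idempotent $e=\tfrac{1}{|G|}\sum_g g$ together with the fact that $ese=t(s)e$ where $t$ is the normalized trace onto $R^G$, and the bimodule isomorphisms $R\cong Te$ and $R\cong eT$ to transport finite generation from the corner of the Wedderburn decomposition. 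All the computations check ($ge=eg=e$, $re=er$ for $r\in R^G$, injectivity of $r\mapsto re$ from the freeness of $T$ over $R$, and the finite generation of $M_{k}(D)e$ over $eM_{k}(D)e$ factor by factor). The only caveat worth recording is that your route is slightly less general than Montgomery's: her Theorem 1.15 covers the no-$|G|$-torsion (or nondegenerate trace) setting via the Bergman--Isaacs machinery, whereas your Maschke averaging genuinely needs $|G|$ invertible. Since the paper works over a field of characteristic zero throughout, this costs nothing here, and your proof would serve as a legitimate replacement for the citation.
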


\

We now present the main result of this section, which is the content of  Theorem \ref{main-2} (\ref{item-21}).

\begin{theorem} \label{inv4}
 Let $A$ be a prime Goldie ring,  $G$  a finite group of automorphisms of $A$. Suppose that $G$ has the nondegenerate trace on $A$, or that the ring $A$ has no nilpotent elements. Suppose also that $B=A^G$ is a prime Goldie ring (see, e.g., \cite[Theorem 3.17]{Montgomery}). Then $LD \, A = LD \, B$, and if $A$ is $LD$-stable then so is $B$.
 \end{theorem}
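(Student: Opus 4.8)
The plan is to reduce Theorem \ref{inv4} to the division-ring case, exactly as in Proposition \ref{inv1}, but using the more general machinery of Proposition \ref{inv3}. Set $Q := Q(A)$, the simple Artinian quotient ring of the prime Goldie ring $A$. First I would check that the $G$-action on $A$ extends uniquely to a $G$-action on $Q$: every automorphism of a prime Goldie ring extends uniquely to its ring of quotients because $Q$ is built from $A$ by inverting regular elements and automorphisms preserve regularity, so $G$ acts on $Q$ by $\mathsf{k}$-algebra automorphisms. Next, the key identification to establish is $Q(A^G) = Q^G$. Under the stated hypotheses (nondegenerate trace, or $A$ reduced), $B = A^G$ is prime Goldie, so $Q(B)$ exists; one shows $B = A \cap Q^G$ has the same regular elements behaving well, and that $Q^G$ is simple Artinian by Proposition \ref{inv3} applied to the semisimple Artinian ring $Q$. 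The containment $Q(A^G) \subseteq Q^G$ is clear once regular elements of $A^G$ are regular in $Q^G$; for the reverse one uses that $Q^G$ is already Artinian, hence equal to its own quotient ring, together with the fact that every element of $Q^G$ can be written with a denominator from $A^G$ — this is where the hypothesis on $G$ (nondegenerate trace or $A$ reduced) is really needed, via the standard results cited from Montgomery.

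Once $Q(A^G) = Q^G$ is in hand, the rest is immediate. By Theorem \ref{main-Zhang}(2), $LD\,A = LD\,Q$ and $LD\,B = LD\,Q(B) = LD\,Q^G$. Proposition \ref{inv3} gives that $Q$ is a finitely generated (left and right) module over $Q^G$, and both are semisimple Artinian; hence $[Q : Q^G] < \infty$ as a right $Q^G$-space. Now I would invoke Theorem \ref{main-Zhang}(1) to conclude $LD\,Q = LD\,Q^G$, and therefore $LD\,A = LD\,B$. For the $LD$-stability claim: $A^G \subseteq A$ are both prime Goldie with equal $LD$-degree, and $A$ is $LD$-stable by hypothesis, so Lemma \ref{main-lemma} applies directly and gives that $A^G$ is $LD$-stable.

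The main obstacle is the identification $Q(A^G) = Q^G$; this is the only place where anything beyond formal manipulation is required. The subtlety is that Theorem \ref{main-Zhang}(1) needs a genuine division-ring (or simple Artinian) inclusion of finite index, so one must know that the fixed ring of the quotient ring really is the quotient ring of the fixed ring, and not something smaller or larger. The hypotheses — $G$ acting with nondegenerate trace, or $A$ having no nilpotents — are precisely the classical conditions under which $A^G$ is prime Goldie and the localization commutes with taking invariants; I would cite \cite[Theorem 3.17]{Montgomery} and the surrounding results (the Artin–Procesi-type statements on fixed rings and the fact, used already in Proposition \ref{inv1} via \cite{Faith}, that $Q(A)^G = Q(A^G)$ for Ore domains) to close this step, rather than reproving it. Everything else is a direct chain of inequalities through Theorem \ref{main-Zhang} and Lemma \ref{main-lemma}, as in Proposition \ref{inv1} and Corollary \ref{inv2}.
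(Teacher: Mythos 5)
Your overall strategy is exactly the paper's: pass to $Q=Q(A)$, identify $Q(A^G)$ with $Q^G$ via Montgomery (the paper cites \cite[Theorem 5.3]{Montgomery} for this), use Proposition \ref{inv3} to get that $Q$ is a finitely generated module over the semisimple Artinian ring $Q^G$, transfer the equality down to $A$ and $B$ by Theorem \ref{main-Zhang}(2), and finish with Lemma \ref{main-lemma}.

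The one step that does not go through as you wrote it is the invocation of Theorem \ref{main-Zhang}(1) to conclude $LD\,Q = LD\,Q^G$. That statement is about division algebras $Q \subset D$ with $[D:Q]$ finite, whereas here $A$ is only assumed prime Goldie, so $Q$ and $Q^G$ are simple Artinian rings (matrix rings over division rings) and in general not division rings; ``$[Q:Q^G]<\infty$ as a right $Q^G$-space'' is not literally the hypothesis of part (1). You half-acknowledge this by writing ``division-ring (or simple Artinian) inclusion,'' but the parenthetical is doing real work that part (1) does not supply. The paper avoids the issue by using Theorem \ref{main-Zhang}(4) instead: $Q^G \subset Q$ are prime Goldie, $Q$ is a finitely generated right $Q^G$-module by Proposition \ref{inv3}, and $Q^G$ is Artinian, so part (4) gives $LD\,Q = LD\,Q^G$ directly. (Alternatively one could reduce to the underlying division rings via \cite[Corollary 3.2(3)]{Zhang}, as is done in the proof of Theorem \ref{primecontexts}, and only then apply part (1).) With that substitution your argument coincides with the paper's.
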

 
\begin{proof}
 Set $Q=Q(A)$ and $P=Q(B)$.
Then 
  $Q^G = P$ by \cite[Theorem 5.3]{Montgomery}. Moreover, both $Q$ and $ P$ are simple Artinian by Goldie's Theorem, and hence they are prime Goldie rings. By Proposition \ref{inv3} and Theorem \ref{main-Zhang}(4), we have $LD \, P = LD \, Q$. Hence, $LD \, A = LD \, B$ by Theorem \ref{main-Zhang}(2). Finally, the last claim follows from Lemma \ref{main-lemma}.
\end{proof}

\

\section{$LD$-stability for Galois algebras }
In this section we apply the results of the previous sections for the lower transcendence degree for Galois algebras, introduced in \cite{FO1}, \cite{FO2}, cf. also \cite{Futorny}.
The setup of Galois algebras is as follows. Let $\mathsf{k}$ be algebraically closed. Consider a pair $\Gamma \subset U$ of algebras, where $\Gamma$ is a finitely generated over $\mathsf{k}$ commutative domain, and  $U$ is an associative algebra finitely generated over $\Gamma$.
 Let $K := Q(\Gamma)$, and $L$ a finite Galois extension of this field with the Galois group $G$. Let $\mathfrak{M} \subset Aut_\mathsf{k} \, L$ be a monoid of automorphisms with the following $\emph{separating}$ property: if $m,m' \in \mathfrak{M}$ have the same restriction to $K$, then they coincide. Finally, we assume that $G$ acts on $\mathfrak{M}$ by conjugations.

\begin{definition}[\cite{FO1}]
If there is an embedding of $U$ into the invariant skew monoid ring $\mathcal{K}:=(K*\mathfrak{M})^G$ such that $KU=UK=\mathcal{K}$, then  $U$ is a \emph{Galois algebra} over $\Gamma$.
\end{definition}

We have

\begin{proposition} \label{main-Galois}\cite[Proposition 4.2]{FO1}
Let $S = \Gamma - \{ 0 \}$. Then $S$ is a left and right denominator set for $U$, and localization (on both sides) by $S$ gives us an isomorphism $U_S \cong \mathcal{K}$.
\end{proposition}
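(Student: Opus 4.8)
The plan is to verify the denominator-set axioms for $S$ directly inside the ambient ring $\mathcal{K}=(K*\mathfrak{M})^G$, and then recognize $U_S$ as $\mathcal{K}$ via the universal property of localization. The key preliminary point is that the field $K=Q(\Gamma)$ sits inside $\mathcal{K}$ compatibly with $\Gamma\hookrightarrow U\hookrightarrow\mathcal{K}$: since $L/K$ is Galois, $G$ fixes $K$ pointwise, and $G$ fixes the identity $e$ of the monoid $\mathfrak{M}$, so $K\cdot e$ already lies in the invariant ring $\mathcal{K}$. In particular every $s\in S=\Gamma\setminus\{0\}$ becomes a unit of $\mathcal{K}$, with $s^{-1}\in K\subset\mathcal{K}$, and $S$ is clearly multiplicatively closed.

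Next I would show that every $x\in\mathcal{K}$ can be written both as a left fraction $s^{-1}u$ and as a right fraction $u's'^{-1}$ with $s,s'\in S$ and $u,u'\in U$. Using the defining relation $KU=\mathcal{K}$ of a Galois algebra, write $x=\sum_i k_i u_i$ with $k_i\in K$ and $u_i\in U$; since $K$ is the commutative localization $S^{-1}\Gamma$, the finitely many $k_i$ admit a common left denominator $s\in S$, say $k_i=s^{-1}\gamma_i$ with $\gamma_i\in\Gamma$, and then $x=s^{-1}(\sum_i \gamma_i u_i)$ with $\sum_i\gamma_i u_i\in U$ since $\Gamma\subset U$. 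The relation $UK=\mathcal{K}$ gives the right-fraction form by the mirror computation.

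With this in hand the denominator conditions are formal. Reversibility is vacuous: if $su=0$ (resp. $us=0$) with $s\in S$ and $u\in U$, then $u=0$ already in $\mathcal{K}$ because $s$ is invertible there and $U\hookrightarrow\mathcal{K}$. For the left Ore condition, given $u\in U$ and $s\in S$ the element $us^{-1}\in\mathcal{K}$ is a left fraction $t^{-1}v$ by the previous paragraph, i.e.\ $tu=vs$ with $t\in S$, $v\in U$; the right Ore condition follows symmetrically from the right-fraction description. Hence $S$ is a two-sided denominator set, $U_S$ exists, and the canonical homomorphism $U_S\to\mathcal{K}$ — well defined because $S$ maps to units of $\mathcal{K}$ — is surjective by the left-fraction description and injective because $s^{-1}u\mapsto 0$ forces $u=0$ in $\mathcal{K}$, hence $u=0$ in $U$, hence $s^{-1}u=0$ in $U_S$. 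Therefore $U_S\cong\mathcal{K}$.

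The only genuinely substantive step is the common-denominator argument, and there all of the content is carried by the hypotheses $KU=UK=\mathcal{K}$ built into the definition of a Galois algebra together with the commutativity of $\Gamma$; I expect this to be the main obstacle only in the sense of bookkeeping. The point to be careful about is extracting the denominator on the correct side of the whole sum — legitimate on the left because $s^{-1}$ is a single element of the field $K$, so left multiplication by it is additive, and dually on the right. Note that no finiteness, Noetherianity, or domain hypothesis on $U$ or on $\mathcal{K}$ enters the argument.
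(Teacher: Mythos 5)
Your proof is correct, and since the paper gives no proof of Proposition \ref{main-Galois} --- it is quoted from \cite[Proposition 4.2]{FO1} --- the only comparison available is with that source, whose argument runs along the same lines as yours: nonzero elements of $\Gamma$ become units in $K\subset\mathcal{K}$, and the defining identity $KU=UK=\mathcal{K}$ together with common denominators in $K=Q(\Gamma)$ yields the two-sided Ore conditions and the identification $U_S\cong\mathcal{K}$. No gaps.
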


In all known cases of Galois algebras, $\mathfrak{M}$  is an ordered semigroup, and in many cases $\mathfrak{M}=\mathbb{Z}^n$, $n \geq 1$  (see \cite{Futorny} and references therein). Note  that $\mathbb{Z}^n$ is an ordered group by \cite[13.1.6]{Passman}.
Henceforth we make the following assumption:

\

$(\dagger)$ $\mathfrak{M}$ is isomorphic to $\mathbb{Z}^n$ for adequate $n$.

\

Under this condition we have:

\

\begin{corol} \label{corol-Galois}
$\mathcal{K}$ and $U$  are Ore domains.
\end{corol}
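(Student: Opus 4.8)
The plan is to deduce Corollary~\ref{corol-Galois} from Proposition~\ref{main-Galois} together with assumption $(\dagger)$. By Proposition~\ref{main-Galois}, $U$ is obtained from $\mathcal{K} = (K * \mathfrak{M})^G$ by (two-sided) localization at $S = \Gamma \setminus \{0\}$, and localization preserves the property of being an Ore domain; moreover $\mathcal{K}$ is a subring of the skew monoid ring $K * \mathfrak{M}$. So it suffices to show that $K * \mathfrak{M}$ is an Ore domain, and then descend to the subring $\mathcal{K}$ and to the localization $U$.

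First I would handle $R := K * \mathfrak{M}$. Under $(\dagger)$ we have $\mathfrak{M} \cong \mathbb{Z}^n$, and $\mathbb{Z}^n$ is an ordered group (as noted, by \cite[13.1.6]{Passman}). A crossed product of a division ring by an ordered group has no zero divisors: writing elements in terms of their support and looking at the minimal (or maximal) element of the support with respect to the order, the leading term of a product is the product of the leading terms, which is nonzero since $K$ is a division ring. Hence $R$ is a domain. Next, $R$ has finite Gelfand--Kirillov dimension: indeed $\mathfrak{M} \cong \mathbb{Z}^n$ has polynomial growth, so $R = K * \mathbb{Z}^n$ has $GK$-dimension $n$ (over $K$, hence a finite value over $\mathsf{k}$ once one accounts for $\mathrm{trdeg}\,K$; in any case it is finite). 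Therefore $R$ contains no free associative subalgebra on two generators, and by \cite[Prop.~4.13]{Krause} a domain of finite $GK$-dimension is an Ore domain. Thus $R$ is an Ore domain.

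Now $\mathcal{K} = R^G$ is a subring of the Ore domain $R$; being a subring of a domain it is itself a domain, and being a subring of a ring of finite $GK$-dimension it again has finite $GK$-dimension, so by the same argument via \cite[Prop.~4.13]{Krause} it is an Ore domain. (Alternatively, one may invoke Proposition~\ref{inv1}/Corollary~\ref{inv2}-type reasoning, or simply that $\mathcal{K}$ is $G$-invariants of an Ore domain under a finite group and hence Ore by a Posner/Montgomery-style argument; but the finite-$GK$ route is cleanest here and uses only facts already in play.) Finally, $U$ is a two-sided localization of $\mathcal{K}$ by Proposition~\ref{main-Galois}, and a localization of an Ore domain at a denominator set of regular elements is again an Ore domain with the same division ring of fractions. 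Hence both $\mathcal{K}$ and $U$ are Ore domains, as claimed.

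I do not expect a serious obstacle here: the only point needing a little care is the passage to the $G$-invariant subring $\mathcal{K}$, where one must be sure that "subring of an Ore domain" is not automatically Ore in general — it is not — so the argument genuinely relies on finite $GK$-dimension being inherited by subrings, which it is. Everything else (no zero divisors in a crossed product by an ordered group, finiteness of $GK$-dimension for $K * \mathbb{Z}^n$, stability of the Ore condition under localization) is standard.
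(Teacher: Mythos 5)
There is a genuine gap in the step where you pass from ``domain'' to ``Ore domain'' via finite Gelfand--Kirillov dimension. You assert that $R=K*\mathbb{Z}^n$ has finite $GK$-dimension over $\mathsf{k}$ (``$GK$-dimension $n$ over $K$, hence finite over $\mathsf{k}$ once one accounts for $\mathrm{trdeg}\,K$''), but this is unjustified and false in general: $K$ is not central in $R$ (the generators of $\mathbb{Z}^n$ act on $K$ by nontrivial automorphisms), so ``$GK$ over $K$'' is not meaningful here, and for a skew group ring $K*\mathbb{Z}^n$ the $GK$-dimension over $\mathsf{k}$ can exceed $\mathrm{trdeg}\,K+n$ or be infinite when the automorphisms do not stabilize finite-dimensional subspaces. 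The paper is itself careful on exactly this point: Proposition \ref{Galois0} only gives the \emph{lower} bound $GK\,U\geq GK\,\Gamma+n$, Proposition \ref{Galois3} must \emph{assume} equality, and Proposition \ref{GWA} imposes a local finiteness hypothesis on the $\sigma_i$ precisely to control $GK$. So the appeal to \cite[Prop.~4.13]{Krause} (no free subalgebra because $GK<\infty$) is not available without further hypotheses, and the same objection applies to your treatment of the subring $\mathcal{K}=R^G$.

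The repair is the route the paper actually takes, which you mention only parenthetically. Since $\mathbb{Z}^n$ is an ordered group, $K*\mathbb{Z}^n$ is a domain (your leading-term argument is fine); it is moreover an iterated skew Laurent extension of the field $K$, hence Noetherian, hence an Ore domain by Goldie --- no growth estimate is needed. The invariant subring $\mathcal{K}=(K*\mathfrak{M})^G$ is then an Ore domain by Faith's theorem that Galois subrings of Ore domains are Ore domains (\cite{Faith}); note that a mere subring of an Ore domain need not be Ore, so some such result is genuinely required here. Finally, you have the localization in Proposition \ref{main-Galois} backwards: it is $\mathcal{K}$ that is the localization $U_S$ of $U$, not $U$ a localization of $\mathcal{K}$. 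The conclusion for $U$ still follows --- $U$ is a domain inside $\mathcal{K}$, and since $S\subset\Gamma$ is central in $\Gamma$ and $U_S=\mathcal{K}$ has a quotient division ring, every element of $Q(\mathcal{K})$ can be rewritten as a fraction of elements of $U$, so $Q(U)=Q(\mathcal{K})$ exists and $U$ is Ore --- but as written your final step transfers the Ore property in the wrong direction.
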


  \begin{proof}
Since $\mathbb{Z}^n$ is an ordered group, we have that $K* \mathbb{Z}^n$ is an Ore domain, and hence so is its invariant subring $\mathcal{K}$ (\cite{Faith}). By Proposition \ref{main-Galois}, the same holds for $U$.
\end{proof}

As our first application of the lower transcendence degree to Galois algebras, we recall the following result  \cite[Theorem 6.1]{FO1}:

\begin{itemize}
\item Under certain technical assumption, $GK \, U \geq GK \, \Gamma + Growth(\mathfrak{M})$
(cf.  \cite[12.11]{Krause}).
\end{itemize}

Under the condition $(\dagger)$, we can show this result in a straightforward way.

\

\begin{proposition} \label{Galois0}
Let $U$ be a Galois algebra in $\mathcal{K}$ with $\mathfrak{M} \cong \mathbb{Z}^n$. Then 
$$GK \, U \geq GK \, \Gamma + n.$$
\end{proposition}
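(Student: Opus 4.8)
The plan is to exploit the localization isomorphism $U_S \cong \mathcal{K} = (K * \mathbb{Z}^n)^G$ from Proposition \ref{main-Galois} together with the elementary behaviour of Gelfand-Kirillov dimension under localization and under passing to subalgebras. First I would recall that since $S = \Gamma \setminus \{0\}$ is an Ore set of regular elements, we have $GK\, U \geq GK\, U_S$ is \emph{not} automatic in general, so instead the strategy should be to produce an explicit subframe of $U$ whose powers grow at least as fast as $n^{GK\,\Gamma + n}$. The natural candidate: pick a subframe $V_0 \subset \Gamma$ witnessing $GK\,\Gamma$ (so $\dim V_0^m \succeq m^{GK\,\Gamma}$, up to the usual $\limsup$ caveats), and adjoin to it elements of $U$ that, after localizing, map to the generators $t_1^{\pm 1}, \dots, t_n^{\pm 1}$ of the group part $\mathbb{Z}^n$ (or rather elements of $U$ lying over them, which exist because $U$ is finitely generated over $\Gamma$ with $KU = \mathcal{K}$).

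The key steps, in order: (1) Choose $u_1, \dots, u_r \in U$ generating $U$ over $\Gamma$, and let $V$ be the subframe spanned by $1$, a GK-witnessing subframe of $\Gamma$, and $u_1, \dots, u_r$; enlarging if necessary so that the images of $V$ in $\mathcal{K}$ contain a fixed $K$-basis-type spanning set for the ``degree $\leq 1$'' part. (2) Inside $K * \mathbb{Z}^n$, the monomials $c\, t^{\alpha}$ with $c \in K$, $\alpha \in \mathbb{Z}^n$, $\|\alpha\|_1 \leq m$ span a space whose dimension over $\mathsf{k}$ grows like $m^n$ times the growth of a suitable finitely generated subalgebra of $K$; taking $G$-invariants only changes this by a bounded factor (index $|G|$), so $\dim(\text{image of } V^m \text{ in } \mathcal{K}) \succeq m^{GK\,\Gamma + n}$. (3) Transfer this lower bound back to $U$: because localization by $S$ is injective on $U$ and $V^m \subset U$, we have $\dim_{\mathsf{k}} V^m \geq \dim_{\mathsf{k}}(\text{image of } V^m)$ — actually equality, since localization is injective — giving $GK\, U \geq GK\,\Gamma + n$.

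The main obstacle I anticipate is step (2), or more precisely the bookkeeping that guarantees that $V^m$, computed inside $U$ and then pushed to $\mathcal{K}$, genuinely contains monomials $c\,t^\alpha$ for a full ``simplex'' of exponents $\alpha$ with $\|\alpha\|_1 \le m$ \emph{and} with the $K$-coefficients $c$ ranging over a subspace that itself has polynomial growth of degree $GK\,\Gamma$. One has to be careful that multiplying the chosen lifts $u_i$ does not collapse degrees (the twist by $\mathfrak{M}$-automorphisms acts on $K$, so $u_i u_j$ lies over $t^{\alpha_i + \alpha_j}$ but with a twisted coefficient), and that the $G$-invariance constraint does not kill the relevant monomials — this is where the ordered-group hypothesis $(\dagger)$ and the separating property of $\mathfrak{M}$ are used, ensuring distinct exponents give linearly independent contributions. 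A clean way to handle this is to first establish the bound for $\mathcal{K} = (K*\mathbb{Z}^n)^G$ directly — $GK\,\mathcal{K} \geq GK\,K + n = GK\,\Gamma + n$ by a standard computation in the skew group ring and invariance under the finite group $G$ (using that $K*\mathbb{Z}^n$ is free of rank $|G|$ over $\mathcal{K}$, cf. Theorem \ref{main-Zhang}-style arguments adapted to $GK$) — and then observe $GK\, U \geq GK\,\mathcal{K}$ cannot be invoked directly since $U \subset \mathcal{K}$ goes the wrong way; hence the honest route remains the explicit subframe argument above, with the localization injectivity of Proposition \ref{main-Galois} doing the final transfer.
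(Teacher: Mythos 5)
There is a genuine gap. Your plan correctly identifies that $GK$ does not pass through the localization $U_S\cong\mathcal{K}$ in the useful direction, but the replacement you offer --- the explicit subframe argument --- is never actually carried out: your step (2), producing inside the image of $V^m$ a family of at least $c\,m^{GK\,\Gamma+n}$ linearly independent elements, is exactly the hard part, and you yourself flag it as the ``main obstacle'' without resolving it. It is not a routine bookkeeping issue. The hypotheses only give $KU=UK=\mathcal{K}$ and finite generation of $U$ over $\Gamma$; there is no guarantee that $U$ contains elements supported on the single monomials $t^{\pm e_i}$, the coefficients of products $u_iu_j$ are twisted by the $\mathfrak{M}$-action on $L$, and the $G$-invariance constraint mixes exponents within $G$-orbits. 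This is precisely why the direct growth estimate in \cite[Theorem 6.1]{FO1} (quoted just before the proposition) requires an additional technical assumption. As written, your argument proves nothing beyond restating the desired inequality for a candidate subframe.

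The paper's proof avoids all of this by staying inside the $LD$ framework, which is the point you dismiss too quickly. Since $LD$ is invariant under Ore localization (Theorem \ref{main-Zhang}(2)) and under finite-group invariants for Ore domains (Proposition \ref{inv1}), one gets $LD\,U=LD\,U_S=LD\,\mathcal{K}=LD\,(K*\mathfrak{M})$; Zhang's lower bound for skew group rings over $\mathbb{Z}^n$ (\cite[Corollary 5.5(2)]{Zhang}) gives $LD\,(K*\mathfrak{M})\geq trdeg\,K+n=GK\,\Gamma+n$; and finally $GK\,U\geq LD\,U$ (Theorem \ref{main-Zhang}(3)) converts this into the stated bound on $GK\,U$. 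So the ``wrong-way'' inclusion that blocks $GK$ is harmless for $LD$, and no combinatorial estimate on $V^m$ is needed. To repair your proof you would either have to supply the missing monomial-independence argument in full (with hypotheses on the supports of the $u_i$ that the proposition does not assume), or switch to the $LD$-based chain above.
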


\begin{proof}
To begin with, it is clear that $Growth(\mathbb{Z}^n)$ is $n$ (cf. \cite{FO1} p. 627). Also, Corollary \ref{corol-Galois}, Theorem \ref{main-Galois} and Theorem \ref{main-Zhang}(1) imply that $LD \, U = LD \, \mathcal{K}$. It is also true that $LD \, \mathcal{K} = LD \, K*\mathfrak{M}$ by Proposition \ref{inv1}. By \cite[Theorem 4.5]{Krause},  $GK \, \Gamma = trdeg \, K$, which equals, as we saw,  to $LD \, K$. Applying  \cite[Corollary 5.5, 2]{Zhang} we obtain  
$$LD \, K*\mathfrak{M} \geq trdeg \, K +n = GK\, \Gamma +n.$$  Hence, $GK \, U \geq LD \, U = LD \, \mathcal{K} = LD \, K*\mathfrak{M} \geq GK \, \Gamma +n$.
\end{proof}

\

In some cases we can establish the $LD$-stability and compute the $GK$ dimension. The follow proposition shows how to obtain this  under some assumptions on the Galois algebra. In particular, it proves  Theorem \ref{main-1} (\ref{item-5})

\

\begin{proposition}\label{Galois3} Let $U$ be a Galois algebra in $\mathcal{K}$ with $\mathfrak{M} \cong \mathbb{Z}^n$.
If $GK \ U = GK \, \Gamma +  n$, then  $U$ is $LD$-stable. In particular, all finite $W$-algebras of type $A$ are $LD$-stable.
\end{proposition}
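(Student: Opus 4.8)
The plan is to combine the results already assembled in this section with the general machinery of Theorem~\ref{main-Zhang}. First I would invoke Corollary~\ref{corol-Galois} to ensure that $U$ is an Ore domain, so that all the localization statements apply; note that $\mathcal{K}$ is also an Ore domain and is a $G$-invariant subring of $K*\mathfrak{M}\cong K*\mathbb{Z}^n$. By Proposition~\ref{main-Galois} (together with Theorem~\ref{main-Zhang}(2)) we have $LD\,U = LD\,\mathcal{K}$, and by Proposition~\ref{inv1} we have $LD\,\mathcal{K} = LD\,(K*\mathfrak{M})$. So the whole problem reduces to computing $LD\,(K*\mathbb{Z}^n)$ and comparing it to $GK\,\Gamma + n$.

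Next I would pin down $LD\,(K*\mathbb{Z}^n)$. The lower bound $LD\,(K*\mathbb{Z}^n)\geq \mathrm{trdeg}\,K + n = GK\,\Gamma + n$ is exactly the chain of inequalities established inside the proof of Proposition~\ref{Galois0}, using \cite[Corollary 5.5]{Zhang} and \cite[Theorem 4.5]{Krause}. For the matching upper bound I would use Theorem~\ref{main-Zhang}(3): $LD\,(K*\mathbb{Z}^n) = LD\,\mathcal{K} = LD\,U \leq GK\,U$, and by hypothesis $GK\,U = GK\,\Gamma + n$. Putting these two together forces
\[
LD\,U = LD\,\mathcal{K} = LD\,(K*\mathbb{Z}^n) = GK\,\Gamma + n = GK\,U,
\]
which is precisely the assertion that $U$ is $LD$-stable.

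For the concluding sentence about finite $W$-algebras of type $A$, I would recall from the literature on Galois algebras (e.g.\ \cite{Futorny} and the references therein) that a finite $W$-algebra of type $A$ is realized as a Galois algebra $U$ over a polynomial subalgebra $\Gamma$ with $\mathfrak{M}\cong \mathbb{Z}^n$, and that its Gelfand--Kirillov dimension is known to equal $GK\,\Gamma + n$ (this is the Poisson/associated-graded computation for $W$-algebras, or can be read off from the PBW-type filtration). Granting that numerical identity, the first part of the proposition applies verbatim and yields $LD$-stability.

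The main obstacle I anticipate is not in the $LD$-theoretic manipulations — those are a fairly mechanical assembly of Theorem~\ref{main-Zhang}, Proposition~\ref{inv1}, Proposition~\ref{main-Galois} and Proposition~\ref{Galois0} — but rather in justifying the input hypothesis $GK\,U = GK\,\Gamma + n$ for the $W$-algebra case. One has the lower bound $GK\,U \geq GK\,\Gamma + n$ for free from Proposition~\ref{Galois0}, so the real content is the reverse inequality $GK\,U \leq GK\,\Gamma + n$, which requires exhibiting a good filtration on the finite $W$-algebra whose associated graded has the expected dimension (for type $A$ this follows from the Kazhdan/Slodowy grading and the fact that the associated graded is a polynomial algebra of the right Krull dimension). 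I would cite the relevant structural results rather than reprove them, and flag that this is where the ``type $A$'' restriction enters.
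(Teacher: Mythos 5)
Your proposal is correct and follows essentially the same route as the paper: the paper's proof simply observes that $LD\,U = GK\,\Gamma + n$ was already established in (the proof of) Proposition~\ref{Galois0}, so the hypothesis $GK\,U = GK\,\Gamma + n$ immediately gives $LD\,U = GK\,U$, and the $W$-algebra case is handled by citing \cite[Theorem 3.3]{FMO} for exactly the numerical identity you flag as the real content. Your write-up just unfolds the ``as shown above'' into its constituent steps (Corollary~\ref{corol-Galois}, Proposition~\ref{main-Galois}, Proposition~\ref{inv1}, Theorem~\ref{main-Zhang}), which is the intended argument.
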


\begin{proof}
The first claim is clear, as we have shown that $LD \, U = GK \, \Gamma +n$ above;  the second statement is \cite[Theorem 3.3]{FMO}.
\end{proof}


Next we consider the generalized Weyl algebras introduced by Bavula \cite{Bavula0}. They play important role in  noncommutative algebraic geometry in small dimensions and have numerous applications in representation theory, homological algebra and ring theory (cf. \cite{BY} for  details).

\begin{definition}
Let $D$ be a ring, $\sigma=(\sigma_1,\ldots, \sigma_n)$ an $n$-tuple of commuting automorphisms of $D$: $\sigma_i \sigma_j = \sigma_j \sigma_i, \, i,j=1,\ldots,n$. Let $a=(a_1,\ldots, a_n)$ be 
an $n$-tuple of non-zero divisors in the center of $D$, such that $\sigma_i(a_j)=a_j, j \neq i$. The generalized Weyl algebra $D(a, \sigma)$ of rank $n$ is generated over $D$ by $X_i^+, X_i^-$, $i=1,\ldots,n$ subject to the  relations: 

\[ X_i^+ d = \sigma_i (d) X_i^+; \, X_i^- d= \sigma_i^{-1}(d) X_i^-, \, d \in D, i=1, \ldots , n , \]
\[ X_i^-X_i^+ = a_i; \, X_i^+X_i^- = \sigma_i(a_i), \, i=1 ,\ldots , n \, ,\]
\[ [X_i^-,X_j^+]=[X_i^-,X_j^-]=[X_i^+,X_j^+]=0 \, , i \neq j.\]
\end{definition}

We will only consider the case when $D$ is a finitely generated commutative domain. In this case $D(a, \sigma)$ is also a Noetherian domain. The Weyl algebra is a natural example of a generalized Weyl algebra (\cite{Bavula0}).

It is not  true that all generalized Weyl algebras are Galois algebras, but under some very mild restriction on $\sigma$,   it is the case (cf. \cite[Theorem 14]{FS}). 

We have:
\

\begin{proposition}\label{prop-embeds}\cite[Proposition 13, 15]{FS}
$D(a, \sigma)$ allways embeds into $D*\mathbb{Z}^n$, where the canonical generators of $\mathbb{Z}^n$ act on $D$ as $\sigma_1, \ldots, \sigma_n$. Both rings have the same quotient ring of fractions.
\end{proposition}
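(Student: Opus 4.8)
The final statement to prove is Proposition~\ref{prop-embeds}, which asserts that a generalized Weyl algebra $D(a,\sigma)$ of rank $n$ over a commutative domain $D$ always embeds into the skew group ring $D*\mathbb{Z}^n$ (with the canonical generators of $\mathbb{Z}^n$ acting as $\sigma_1,\ldots,\sigma_n$), and that the two rings share the same ring of fractions.

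The plan is to work one rank at a time, first treating $n=1$ and then iterating. For $n=1$, write $D*\mathbb{Z} = \bigoplus_{m\in\mathbb{Z}} D t^m$ with $t d t^{-1} = \sigma(d)$. I would define a map on generators by $X^+ \mapsto t$, $X^- \mapsto a t^{-1} = \sigma^{-1}(a)$ — wait, more precisely one sends $X^+\mapsto t$ and $X^-\mapsto t^{-1}a$ (equivalently $\sigma^{-1}(a)t^{-1}$), and $d\mapsto d$ for $d\in D$. The first step is to check that this assignment respects all defining relations: $X^+d = \sigma(d)X^+$ becomes $td = \sigma(d)t$, which holds in $D*\mathbb{Z}$; $X^-d = \sigma^{-1}(d)X^-$ becomes $t^{-1}ad = t^{-1}\sigma(d)a$ (using centrality of $a$ in $D$ and $\sigma^{-1}$ applied appropriately) — here one must be slightly careful, but it reduces to $ad=da$ and the twist, both valid; $X^-X^+ = a$ becomes $(t^{-1}a)t = t^{-1}at = \sigma^{-1}(a)$... this forces the correct normalization, so I would instead send $X^-\mapsto \sigma^{-1}(a)t^{-1}$ and verify $X^-X^+\mapsto \sigma^{-1}(a)t^{-1}t = \sigma^{-1}(a)$; and to match $X^-X^+=a_i$ one actually needs to send $X^+\mapsto t$, $X^-\mapsto a t^{-1}$, giving $X^-X^+ = at^{-1}t = a$ and $X^+X^- = t a t^{-1} = \sigma(a)$, which is exactly right. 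So the correct map is $X^+\mapsto t,\ X^-\mapsto at^{-1}$. Then one checks this extends to a ring homomorphism $\varphi\colon D(a,\sigma)\to D*\mathbb{Z}$.

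The second step is injectivity. Using the standard $\mathbb{Z}$-grading on $D(a,\sigma)$ — where $D(a,\sigma) = \bigoplus_{m\in\mathbb{Z}} D_m$ with $D_m = D\,X^{+m}$ for $m\ge 0$ and $D_m = D\,X^{-|m|}$ for $m<0$ (this is Bavula's description, valid because the $a_i$ are non-zero divisors) — one observes that $\varphi$ is graded: $\varphi(D_m)\subseteq Dt^m$. Since $\varphi$ restricted to each homogeneous component $DX^{+m}\to Dt^m$ is $dX^{+m}\mapsto d t^m$, it is injective on each component (as $D$ is a domain and $a_i$ non-zero divisors, $dX^{+m}=0$ forces $d=0$), hence $\varphi$ is injective. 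For the rank-$n$ case, I would iterate: $D(a,\sigma)$ of rank $n$ is the rank-one generalized Weyl algebra over the rank-$(n-1)$ one with respect to $\sigma_n$ and $a_n$ (this uses the commutativity $\sigma_i\sigma_j=\sigma_j\sigma_i$ and $\sigma_i(a_j)=a_j$ for $i\ne j$, which guarantee the iterated construction is well-defined), and similarly $D*\mathbb{Z}^n = (D*\mathbb{Z}^{n-1})*\mathbb{Z}$; then apply the rank-one embedding and induction.

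For the statement about fractions, let $S$ be the multiplicative set generated in $D(a,\sigma)$ by the images under $\varphi$ that we want to invert — concretely, observe that inside $D*\mathbb{Z}^n$ each $t_i = \varphi(X_i^+)$ is a unit, and that $X_i^- = a_i t_i^{-1}$ is invertible in $Q(D*\mathbb{Z}^n)$ since $a_i$ is a non-zero divisor. Conversely, $t_i = \varphi(X_i^+)$ already lies in $\varphi(D(a,\sigma))$. So $D*\mathbb{Z}^n$ is obtained from $\varphi(D(a,\sigma))$ by inverting the Ore set $\{X_1^{+m_1}\cdots X_n^{+m_n} : m_i\ge 0\}$ of regular elements, whence $Q(D(a,\sigma)) = Q(D*\mathbb{Z}^n)$ by the universal property of localization (both rings being Ore domains — $D*\mathbb{Z}^n$ is Ore since $\mathbb{Z}^n$ is an ordered group and $D$ commutative Noetherian, and $D(a,\sigma)$ is a Noetherian domain hence Ore). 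I expect the main obstacle to be purely bookkeeping: getting the normalization of the images of $X_i^\pm$ exactly right so that all six families of relations hold simultaneously, and being careful that the iterated-extension description of both $D(a,\sigma)$ and $D*\mathbb{Z}^n$ genuinely matches up under the hypotheses on $\sigma$ and $a$. None of this is deep, but it is the kind of calculation where a sign or a $\sigma$ versus $\sigma^{-1}$ can easily go astray.
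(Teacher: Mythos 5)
Your proof is correct, and it is the standard argument: the paper does not prove this proposition but merely cites \cite[Propositions 13, 15]{FS}, where the embedding is established by essentially the same map $d\mapsto d$, $X_i^+\mapsto t_i$, $X_i^-\mapsto a_i t_i^{-1}$, with injectivity read off from the $\mathbb{Z}^n$-grading and the equality of quotient rings from the sandwich $D(a,\sigma)\subseteq D*\mathbb{Z}^n\subseteq Q(D(a,\sigma))$. Your relation checks and normalization are right (in particular $X_i^+X_i^-\mapsto t_ia_it_i^{-1}=\sigma_i(a_i)$, and $\sigma_j(a_i)=a_i$ for $i\neq j$ handles the mixed commutators), so there is nothing to add beyond tidying the exploratory detours.
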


\

Under very mild assumptions, we can show that the generalized Weyl algebras are $LD$-stable, and compute the lower transcendence degree. Namely:

\begin{proposition}\label{GWA}
Let $D(a, \sigma)$ be a generalized Weyl algebra of rank $n$ such that for every finite dimensional vector space $U \subset D$ there is another finite dimensional vector space $V \supset U$ such that $\sigma_i(V)=V, i=1, \ldots,  n$. Then $D(a,\sigma)$ is $LD$-stable, and the value of the invariant is $GK \, D + n$.
\end{proposition}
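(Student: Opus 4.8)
The plan is to sandwich $LD\,D(a,\sigma)$ between $GK\,D(a,\sigma)$ from above and a computed lower bound from below, using the embedding into the skew group ring $D*\mathbb{Z}^n$ provided by Proposition \ref{prop-embeds} together with the localization invariance of $LD$. First I would observe that since $D$ is a finitely generated commutative domain, $D(a,\sigma)$ is a Noetherian domain of finite Gelfand-Kirillov dimension; in particular it contains no free subalgebra on two generators, so by \cite[Prop.~4.13]{Krause} it is an Ore domain, as is $D*\mathbb{Z}^n$ (the latter because $\mathbb{Z}^n$ is an ordered group, cf. the argument in Corollary \ref{corol-Galois}). By Proposition \ref{prop-embeds} the two rings have the same ring of fractions, so Theorem \ref{main-Zhang}(2) gives $LD\,D(a,\sigma) = LD\,(D*\mathbb{Z}^n)$.

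Next I would compute $LD\,(D*\mathbb{Z}^n)$. The hypothesis on $\sigma$ — that every finite dimensional $U\subset D$ sits inside a finite dimensional $\sigma_i$-stable $V$ — is exactly what is needed to produce good subframes: starting from a subframe $U_0$ of $D$ with $\mathrm{trdeg}\,Q(D)$ "captured" by it, enlarge to a $\sigma$-stable finite dimensional $V$, and then the subframe $V + \sum_i(\mathsf{k}t_i + \mathsf{k}t_i^{-1})$ of $D*\mathbb{Z}^n$ (with $t_i$ the canonical generators) behaves like the generators of a skew Laurent extension in $n$ commuting variables over $V$. Applying \cite[Corollary 5.5, 2]{Zhang} (the same tool used in Proposition \ref{Galois0}) iteratively over the $n$ variables yields
\[ LD\,(D*\mathbb{Z}^n) \;\geq\; LD\,D + n \;=\; \mathrm{trdeg}\,Q(D) + n \;=\; GK\,D + n, \]
where the last equality is \cite[Theorem 4.5]{Krause}. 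Conversely, by Theorem \ref{main-Zhang}(3), $LD\,D(a,\sigma)\leq GK\,D(a,\sigma)$, and the $GK$ dimension of a generalized Weyl algebra of rank $n$ over $D$ is $GK\,D + n$ (this is standard; the skew Laurent polynomial ring $D[t_i^{\pm1};\sigma_i]$ adds exactly $1$ to the $GK$ dimension at each step, and $D(a,\sigma)$ has the same $GK$ dimension as $D*\mathbb{Z}^n$ since one is obtained from the other by Ore localization at a denominator set, cf. Theorem \ref{main-Zhang} and the discussion after the definition of $LD$-stability). Combining,
\[ GK\,D + n \;\leq\; LD\,D(a,\sigma) \;=\; LD\,(D*\mathbb{Z}^n) \;\leq\; GK\,D(a,\sigma) \;=\; GK\,D + n, \]
so all terms are equal; in particular $LD\,D(a,\sigma) = GK\,D(a,\sigma)$, i.e. $D(a,\sigma)$ is $LD$-stable with invariant $GK\,D + n$.

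The main obstacle I anticipate is the lower bound step: one must check carefully that the $\sigma$-stability hypothesis really does let \cite[Corollary 5.5]{Zhang} apply to $D*\mathbb{Z}^n$ viewed as an iterated skew Laurent extension, i.e. that the relevant subframe $V$ of $D$ and the adjoined invertible variables $t_i^{\pm 1}$ together satisfy the volume-difference inequalities with the right exponent. The point is that $Vt_i = t_i\,\sigma_i^{-1}(V) = \sigma_i^{-1}(V)t_i$, and $\sigma_i$-invariance of $V$ makes the powers of $t_i$ genuinely "free" over $V$ dimension-wise, so that the growth of $(V + \sum_i \mathsf{k}t_i + \mathsf{k}t_i^{-1})^m$ is bounded below by that of a commutative Laurent polynomial ring in $n$ variables over a space of the same transcendence-degree weight as $D$. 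Once that is in place the rest is bookkeeping with Theorem \ref{main-Zhang} and the cited $GK$-dimension formula.
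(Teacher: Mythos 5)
Your overall strategy is the one the paper uses: embed $D(a,\sigma)$ into $D*\mathbb{Z}^n$ via Proposition \ref{prop-embeds}, use the fact that the two rings share a quotient division ring together with Theorem \ref{main-Zhang}(2) to get $LD\,D(a,\sigma)=LD\,(D*\mathbb{Z}^n)\geq GK\,D+n$ from Zhang's Corollary 5.5, and then close the sandwich with the formula $GK\,D(a,\sigma)=GK\,D+n$. The lower-bound half of your argument is fine and is exactly where the $\sigma$-stability hypothesis enters on the paper's side as well.

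The genuine gap is in your justification of the upper bound. You assert that $GK\,D(a,\sigma)=GK\,(D*\mathbb{Z}^n)$ ``since one is obtained from the other by Ore localization at a denominator set.'' Gelfand--Kirillov dimension is \emph{not} invariant under Ore localization: the standard counterexample is the first Weyl algebra, with $GK\,A_1(\mathsf{k})=2$ but $GK\,Q(A_1(\mathsf{k}))=\infty$ (this is precisely why $LD$ and $GKtdg$ exist as separate invariants, cf.\ Theorem \ref{main-Zhang}(2), which records localization-invariance for $LD$ only). Moreover, Proposition \ref{prop-embeds} gives an embedding and equality of quotient rings, not that $D*\mathbb{Z}^n$ is a localization of $D(a,\sigma)$. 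The paper avoids this entirely by citing the computation $GK\,D(a,\sigma)=GK\,D+n$ from \cite[Corollary 3.5]{Mo}, which is where the local-finiteness hypothesis on the $\sigma_i$ is really consumed. Your argument can be repaired without that citation: monotonicity of $GK$ under subalgebras gives $GK\,D(a,\sigma)\leq GK\,(D*\mathbb{Z}^n)=GK\,D+n$ (the last equality being the iterated skew-Laurent computation, valid because each $\sigma_i$ is locally algebraic by hypothesis), and the reverse inequality $GK\,D(a,\sigma)\geq GK\,D+n$ then follows for free from $GK\geq LD\geq GK\,D+n$, which you have already established. With that substitution the proof closes correctly.
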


\begin{proof}
By  Proposition \ref{prop-embeds},  \cite[Corollary 5.5.2]{Zhang} and Theorem \ref{main-Zhang}(2), we get $LD \, D(a,\sigma) \geq GK \, D +n$. On the other hand,   $GK \, D(a,\sigma) = GK \, D +n$ by \cite[Corollary 3.5]{Mo}. So both invariants coincide.
\end{proof}

\

This proves  Theorem \ref{main-1}(\ref{item-4}). Many interesting generalized Weyl algebras satisfy the condition of  Proposition \ref{GWA} (cf. \cite{Mo}). In particular, we have

\

\begin{corol}\label{corol-GWA}
Every Noetherian generalized down-up algebra, the $q$-Heisenberg algebra in three generators, $\mathcal{O}_{q^2}(so(\mathsf{k}))$ and every rank $1$ quantum generalized Weyl algebras (with non-root of unity parameter) are $LD$-stable.
\end{corol}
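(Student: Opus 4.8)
The plan is to verify that each family of algebras listed in Corollary \ref{corol-GWA} is a generalized Weyl algebra (of some finite rank) over a finitely generated commutative domain satisfying the hypothesis of Proposition \ref{GWA}, and then simply quote that proposition. So the proof reduces to a case-by-case check, and the only real content is the verification of the combinatorial condition on the automorphisms $\sigma_i$: for every finite dimensional $U \subset D$ there is a finite dimensional $V \supseteq U$ with each $\sigma_i(V) = V$.

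First I would recall the explicit GWA presentations. A Noetherian generalized down-up algebra is, by Bavula's description (cf. \cite{Bavula0} and \cite{Mo}), a rank $1$ GWA $D(a,\sigma)$ with $D = \mathsf{k}[h]$ a polynomial ring in one variable and $\sigma$ an affine automorphism $h \mapsto \alpha h + \beta$; the $q$-Heisenberg algebra in three generators and $\mathcal{O}_{q^2}(\mathfrak{so}(\mathsf{k}))$ are likewise realized (see \cite{Mo}) as GWAs over a polynomial ring $\mathsf{k}[h]$ (or a Laurent polynomial ring $\mathsf{k}[h^{\pm 1}]$) with $\sigma$ a monomial-scaling or affine automorphism; and a rank $1$ quantum GWA has $D = \mathsf{k}[h]$ with $\sigma(h) = q h$ for $q$ not a root of unity. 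In every one of these cases $\sigma$ acts on $D$ by an automorphism that preserves the ascending filtration of $D$ by degree in $h$: if $\sigma(h)=\alpha h+\beta$ then $\sigma$ maps $\mathsf{k} + \mathsf{k} h + \cdots + \mathsf{k} h^{m}$ into itself, and similarly $\sigma(h) = qh$ preserves each such space; in the Laurent case one uses the spaces $\mathrm{span}\{h^i : |i| \le m\}$, which are $\sigma$-stable when $\sigma$ scales $h$. Thus given any finite dimensional $U \subset D$, choosing $m$ large enough that $U$ is contained in the degree-$\le m$ part and taking $V$ to be that part does the job. (For rank $1$ there is a single $\sigma = \sigma_1$, so the commuting condition is vacuous.)

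Concretely the proof would read: each algebra in the list is, by \cite{Bavula0, Mo}, a generalized Weyl algebra $D(a,\sigma)$ with $D$ a finitely generated commutative domain and $\sigma$ acting on $D = \mathsf{k}[h]$ or $\mathsf{k}[h^{\pm 1}]$ by $h \mapsto \alpha h + \beta$ or $h \mapsto q h$; such an automorphism preserves the natural finite dimensional filtration of $D$, so the hypothesis of Proposition \ref{GWA} is met; hence each algebra is $LD$-stable by Proposition \ref{GWA}. Then invoke Proposition \ref{GWA} once more for the value of the invariant if one wants it, though the corollary as stated only asserts $LD$-stability.

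I expect the only delicate point to be bookkeeping: making sure that in each named example the cited GWA presentation really is over a \emph{finitely generated commutative domain} (true in all cases, $\mathsf{k}[h]$ or $\mathsf{k}[h^{\pm 1}]$) and that the $q$-non-root-of-unity hypothesis for the quantum rank $1$ case and the Noetherian hypothesis for down-up algebras are exactly what guarantee the GWA realization with commuting (here single) automorphism — these are precisely the conditions under which the results of \cite{Mo} apply. There is no analytic or estimation difficulty; the potential obstacle is purely that of correctly matching each algebra to its GWA datum, which is routine given \cite{Bavula0} and \cite{Mo}.
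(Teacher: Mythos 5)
Your proposal is correct and follows essentially the same route as the paper: the paper's proof is a one-line citation to \cite[Proposition 4.3, 4.5, Example 4.9, Proposition 4.11]{Mo}, which supply exactly the case-by-case GWA presentations and the locally-finite (filtration-preserving) behaviour of $\sigma$ that you verify by hand before invoking Proposition \ref{GWA}. Your explicit check that $h \mapsto \alpha h + \beta$ or $h \mapsto qh$ stabilizes the degree filtration of $\mathsf{k}[h]$ (resp.\ $\mathsf{k}[h^{\pm 1}]$) is the content the paper delegates to those citations, so there is no gap.
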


\begin{proof}
Follows from \cite[Proposition 4.3, 4.5, Example 4.9, Proposition 4.11]{Mo}. 
\end{proof}

We finish this section with an analysis of $LD$-stability of Ore extensions of the polynomial algebra.

\begin{proposition}\label{Alev}
Let $A=\mathbb{C}[x][y; \alpha, \delta]$ ($\alpha$ an automorphism, $\delta$ an $\alpha$-derivation) be an iterated Ore extension such that the center of $Q(A)$ is $\mathbb{C}$. Then $A$ is $LD$-stable with $LD \, A = 2$ if and only if the automorphism $\alpha$ is locally algebraic, that is every $t \in \mathbb{C}[x]$ is contained in a finite dimensional $\alpha$-stable subspace. 
\end{proposition}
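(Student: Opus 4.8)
The plan is to split into the two implications and, in both directions, reduce everything to the known $LD$-stability of the first Weyl algebra $A_1(\mathbb{C})$ and the polynomial algebra $\mathbb{C}[x,y]$, together with the machinery of Section~4 (invariants under finite groups) and Section~2 (denominator sets, finite extensions of division algebras). Since $A=\mathbb{C}[x][y;\alpha,\delta]$ is a Noetherian domain of $GK$-dimension $2$ (an Ore extension of a commutative affine domain by an automorphism and a derivation raises $GK$ by exactly $1$, see \cite{Krause}), it is in particular an Ore domain; set $Q=Q(A)$, a division algebra with $GKtdg$ and $LD$ at most $2$. By Lemma~\ref{main-lemma} and Theorem~\ref{main-Zhang}, $A$ is $LD$-stable with $LD\,A=2$ if and only if $LD\,Q=2$, so the whole statement is really a statement about the division algebra $Q$, and the dichotomy "$LD\,Q=2$ versus $LD\,Q<2$" will be forced by whether $Q$ is "close to" $A_1(\mathbb{C})$ or is, up to finite-dimensional extension, close to a field.

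\emph{The direction ($\alpha$ locally algebraic $\Rightarrow$ $LD$-stable).} First I would observe that if $\alpha$ is locally algebraic on $\mathbb{C}[x]$, then $\alpha$ extends to a locally algebraic automorphism of $L=\mathbb{C}(x)=Q(\mathbb{C}[x])$, and a classical argument (the group $\langle\alpha\rangle$ acts with every element lying in a finite-dimensional stable subspace, so the minimal polynomials are bounded on generators) shows that either $\alpha$ has finite order on $L$, or $L$ has a finite-index $\alpha$-stable subfield on which the situation degenerates. The relevant dichotomy for iterated Ore extensions $\mathbb{C}[x][y;\alpha,\delta]$ with $\alpha$ an automorphism of $\mathbb{C}[x]$ is well understood: up to a change of variables one may assume $\alpha(x)=\lambda x$ or $\alpha = \mathrm{id}$; locally algebraic with center of $Q(A)$ trivial forces $\lambda$ a root of unity or $\alpha=\mathrm{id}$ with $\delta\neq 0$ (a nonzero ordinary derivation, giving $A\cong A_1(\mathbb{C})$ up to localization). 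In the root-of-unity case, a power $\alpha^m=\mathrm{id}$, and $y^m$ becomes central-ish, so $A$ is a finite module over a subalgebra that is either $A_1(\mathbb{C})$-like or $\mathbb{C}[x,y]$-like; in all cases $Q$ contains, and is finite over, the quotient division ring of an algebra already known to be $LD$-stable of $GK$-dimension $2$, so Theorem~\ref{main-Zhang}(1) and (4) give $LD\,Q=2$. Alternatively, and more cleanly, one writes $Q$ as the fixed ring $Q(A_1)^{\mathbb{Z}/m}$ or uses Proposition~\ref{inv1}/Theorem~\ref{inv4} directly: $A$ (or a localization of it) is the invariants of an honest Weyl-type algebra under a finite cyclic group, whence $LD\,A = LD\,A_1(\mathbb{C}) = 2$.

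\emph{The direction (not locally algebraic $\Rightarrow$ not $LD$-stable).} Here the hard part lies. If $\alpha$ is not locally algebraic, then on $\mathbb{C}(x)$ the automorphism is (up to conjugation) of the form $x\mapsto x+1$ or $x\mapsto qx$ with $q$ not a root of unity — the "Weyl-algebra-like" and "quantum-torus-like" cases. In the first case, absorbing $\delta$ by a change of variable, $A$ is Morita/localization-equivalent to $A_1(\mathbb{C})$ again, which is $LD$-stable with value $2$ — so actually this sub-case is consistent with $LD$-stability, meaning the genuine non-$LD$-stable phenomena occur in the $q$-case: $A$ is then a localized quantum plane $\mathbb{C}_q[x,y]$, whose quotient division ring is the quantum Weyl field, and Zhang~\cite{Zhang} shows precisely that the $q$-skew field in two variables has $LD = 1 < 2 = GK$. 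So the claim "not locally algebraic $\Rightarrow$ not $LD$-stable" is the statement that, under the hypothesis that $Q(A)$ has center $\mathbb{C}$, the non-locally-algebraic automorphisms all lead (after the standard classification of such Ore extensions) to the $q$-case rather than the additive case; the additive case $x\mapsto x+1$ has center $\mathbb{C}$ too but — I would need to check — is excluded because $\delta$ can be normalized away and then $\alpha(x)=x+1$ makes $A\cong A_1$, consistent, but that would contradict the "only if". I expect the resolution is that "locally algebraic" in the proposition, combined with the trivial-center hypothesis, is exactly equivalent to "$A$ embeds into/localizes to something $LD$-stable of $GK$-dim $2$", and the contrapositive amounts to invoking \cite{Zhang}'s computation of $LD$ for the relevant $q$-division algebras. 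The main obstacle, therefore, is the bookkeeping of the classification of $\mathbb{C}[x][y;\alpha,\delta]$ with trivial center into the finitely many normal forms, and matching each form to its known $LD$-value; once that table is in hand, every entry follows from Theorem~\ref{main-Zhang}, Proposition~\ref{inv1}, and the quantum-Weyl-field example in \cite{Zhang}.
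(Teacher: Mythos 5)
Your proposal goes wrong at the very first step, and the error propagates through both directions. You assert that $A=\mathbb{C}[x][y;\alpha,\delta]$ always has $GK\,A=2$ because ``an Ore extension of a commutative affine domain by an automorphism and a derivation raises $GK$ by exactly $1$.'' This is false, and the failure of that claim is the entire content of the proposition: by \cite[Theorem 12.3.3]{Krause}, $GK\,A=2$ holds \emph{if and only if} $\alpha$ is locally algebraic, and otherwise $GK\,A$ is strictly larger. The paper's argument is structurally the opposite of yours: it first shows that $LD\,A=2$ \emph{unconditionally} under the trivial-center hypothesis, because by \cite[Th\'eor\`eme 3.7]{Alev1} the division ring $Q(A)$ is either the first Weyl field or $Q(\mathbb{C}_q[x,y])$ with $q$ not a root of unity, and both have $LD=2$ by \cite[Theorem 0.5]{Zhang}. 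The equivalence ``$LD$-stable $\Leftrightarrow$ $\alpha$ locally algebraic'' then reduces to ``$GK\,A=2$ $\Leftrightarrow$ $\alpha$ locally algebraic,'' which is exactly the cited Krause--Lenagan result. No invariant theory (Proposition~\ref{inv1}, Theorem~\ref{inv4}) and no case-by-case normal forms are needed.

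Your treatment of the ``only if'' direction compounds the problem: you locate the failure of $LD$-stability in the claim that ``the $q$-skew field in two variables has $LD=1<2=GK$.'' That is not what Zhang proves --- the quantum plane at a non-root of unity is $LD$-stable with $LD=2$ (\cite[Corollary 6.3(1)]{Zhang}, used elsewhere in this paper), and the present proof \emph{depends} on that fact to get $LD\,Q(A)=2$ in both branches of the Alev--Dumas dichotomy. When $\alpha$ fails to be locally algebraic, the invariant that moves is $GK$, not $LD$; since you have fixed $GK\,A=2$ from the outset, your argument cannot produce a non-$LD$-stable case, and the ``only if'' implication is unprovable from your premises. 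You sensed this tension yourself (``that would contradict the `only if'{}''), but the resolution is not a finer classification of Ore extensions: it is that your assumption $GK\,A=2$ must be dropped and replaced by the Krause--Lenagan criterion, after which the whole proof is three lines.
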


\begin{proof}
It follows from \cite[Théorème 3.7]{Alev1} that under the hypothesis of the theorem, $Q(A)$ is either the first Weyl field or $Q(\mathbb{C}_q[x,y])$, for $q \neq 0, 1$ and $q$ not a root of unity. Hence, $LD \, A = LD \, Q(A) = 2$ (\cite[Theorem 0.5]{Zhang}). Now $GK \, A$  equals $2$ if and only if $\alpha$ is locally algebraic by \cite[Theorem 12.3.3]{Krause}.
\end{proof}

\begin{remark}
By the results of \cite{Alev1}, if the center is bigger than the base field, then $A$ is either the polynomial algebra in two indeterminates, or a quantum plane at a root of unity, or the first quantization of the Weyl algebra at a root of unity. In all these cases $A$ is a $PI$-algebra, and hence already known to be $LD$-stable.
\end{remark}

\

\section{$LD$-stability for quantum groups and universal enveloping algebras of Lie superalgebras}

We will always assume that $q \in \mathsf{k}$ is not a root of unity. Let $\mathfrak{g}$ denote a finite dimensional semisimple Lie algebra over an algebraically closed field of characteristic 0. For $0 \neq q \in \mathsf{k}$, we consider the quantized envelopping algebra $U_q(\mathfrak{g})$ given by the quantum Chevalley-Serre relations (we refer  to \cite{Goodearl}, I.6.3, for standard definition). 

Fix a  basis $\Delta =\{ \alpha_1, \ldots, \alpha_n \}$ of the associated root system of $\mathfrak{g}$ (with $N$ positive roots). Let $w_0$ be the longest element of the Weyl group and $w_0 = s_{i_1} \ldots s_{i_N}$  a fixed reduced expression of  $w_0$ with respect to the basis $\Delta$, where $s_i$ is the reflection associated to $\alpha_i$. Order all positive roots as follows
$$\beta_1 = \alpha_{i_1}, \beta_2 = s_{i_1} \alpha_{i_2}, \ldots, \beta_N = s_{i_1} \ldots s_{i_{N-1}} \alpha_N.$$

Define the elements  $E_{\beta_j} = T_{i_1} \ldots T_{i_{j-1}} E_{i_j}$ (c.f. \cite[I.6.7, I.6.8]{Goodearl}). For $m=(m_1, \ldots, m_N) \in \mathbb{N}^N$ set $E_\beta^m = E_{\beta_1}^{m_1} \ldots E_{\beta_N}^{m_N}$. Then  a basis of $U_q(\mathfrak{g})^+$ is given by $ \{ E_\beta^m, m \in  \mathbb{N}^N \}$. 

Recall the  Levendorskii-Soibelman relations:

\[ E_{\beta_i} E_{ \beta_j} - q^{(\beta_i, \beta_j)} E_{\beta_j} E_{ \beta_i} = \sum_{m \in \mathbb{N}^N} z_m E_\beta^m, \]

where $1 \leq i < j \leq N$, $z_m \in \mathbb{Q}[q^{\pm 1}]$ and it is zero unless $m_r=0$ for $r \leq i, r \geq j$ (\cite[Proposition 1.6.10]{Goodearl}).


\begin{definition}
A total ordering $\preceq$ on the monoid $\mathbb{N}^m$ is called linear admissible if:
\begin{itemize}
\item
$x \preceq y$ implies $x+z \preceq y + z, \forall x,y,z \in \mathbb{N}^m$
\item
(0, \ldots, 0) is the smallest element.
\end{itemize}
\end{definition}

\begin{definition}
A multi-filtration of an algebra $A$ is a family $\mathcal{F} = \{ F_x(A)| x \in \mathbb{N}^m \}$ of subspaces such that:
\begin{itemize}
\item
$F_x(A) \subset F_y(A)$ if $x \preceq y$;
\item
$F_x(A)F_y(A) \subset F_{x+y}(A)$;
\item
$\bigcup_{x \in \mathbb{N}^m} F_x(A) = A$; 
\item
$1 \in F_0(A)$,
\end{itemize}
where $\preceq$ is a linear admissible total ordering on $\mathbb{N}^m$.
\end{definition}


Given the basis described above for $U_q(\mathfrak{g})^+$, we can order $\mathbb{N}^N$ lexographically and get an admissible ordering $\preceq$. Also, define a multi-filtration $\mathcal{F}_m= \{ E_\beta^p | p  \preceq m\}$, $m \in \mathbb{N}^N$.
This multi-filtration is finite dimensional and the associated graded algebra is the quantum affine space with generators $E_{\beta_i}, i=1 ,\ldots, N$ and relations 
$$ E_{\beta_i} E_{ \beta_j} = q^{(\beta_i, \beta_j)} E_{\beta_j} E_{ \beta_i},$$
 following the Levendorskii-Soibelman relations (cf. \cite[I.6.11]{Goodearl}).

The quantum affine spaces are $LD$-stable (\cite[Corollary 6.3(1)]{Zhang}), and since the multi-filtration is finite dimensional, one can use \cite[Theorem 2.8]{Torrecillas} 
to conclude that $GK \, U_q(\mathfrak{g})^+ = GK \, gr_\mathcal{F}  U_q(\mathfrak{g})^+$ is the same as the Gelfand-Kirillov dimension of the quantum affine space, which is known to be $N$ \cite[II.9.6-9]{Goodearl}.

Applying \cite[Theorem 4.3]{Zhang} we get  Theorem \ref{main-1} (\ref{item-2}):

\

\begin{theorem}\label{quantum1}
$U_q(\mathfrak{g})^+$ is $LD$-stable, and $LD \, U_q(\mathfrak{g})^+$ equals the number of positive roots of $\mathfrak{g}$.
\end{theorem}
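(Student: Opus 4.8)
The plan is to assemble the already-established ingredients about $U_q(\mathfrak{g})^+$ into a single application of Zhang's transfer theorem for multi-filtered algebras. First I would record the basis $\{E_\beta^m : m \in \mathbb{N}^N\}$ of $U_q(\mathfrak{g})^+$ and the Levendorskii--Soibelman relations, which govern the product of two PBW monomials and in particular force the ``lower-order'' correction terms to lie strictly below $E_{\beta_i}E_{\beta_j}$ in the lexicographic order on $\mathbb{N}^N$. This is precisely what makes the family $\mathcal{F}_m = \operatorname{span}\{E_\beta^p : p \preceq m\}$ a genuine multi-filtration by finite-dimensional subspaces, with $1 \in F_0$, and whose associated graded algebra is the quantum affine space $\mathcal{O}_{\mathbf{q}}(\mathsf{k}^N)$ on generators $E_{\beta_1},\dots,E_{\beta_N}$ with the relations $E_{\beta_i}E_{\beta_j} = q^{(\beta_i,\beta_j)}E_{\beta_j}E_{\beta_i}$.

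Next I would invoke the two external facts. The quantum affine space $\mathcal{O}_{\mathbf q}(\mathsf k^N)$ is $LD$-stable by \cite[Corollary 6.3(1)]{Zhang}, so $LD\,\operatorname{gr}_{\mathcal F} U_q(\mathfrak g)^+ = GK\,\operatorname{gr}_{\mathcal F} U_q(\mathfrak g)^+ = N$, the last equality being the standard computation of the Gelfand--Kirillov dimension of a quantum affine space \cite[II.9.6--9]{Goodearl}. Because the multi-filtration is finite-dimensional, \cite[Theorem 2.8]{Torrecillas} gives $GK\,U_q(\mathfrak g)^+ = GK\,\operatorname{gr}_{\mathcal F} U_q(\mathfrak g)^+ = N$. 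Finally, Zhang's \cite[Theorem 4.3]{Zhang} — the statement that $LD$ of a multi-filtered algebra is bounded below by $LD$ of its associated graded algebra — yields $LD\,U_q(\mathfrak g)^+ \geq LD\,\operatorname{gr}_{\mathcal F} U_q(\mathfrak g)^+ = N$, while the general inequality $LD\,A \leq GK\,A$ from Theorem \ref{main-Zhang}(3) gives $LD\,U_q(\mathfrak g)^+ \leq GK\,U_q(\mathfrak g)^+ = N$. Combining, $LD\,U_q(\mathfrak g)^+ = N = GK\,U_q(\mathfrak g)^+$, so the algebra is $LD$-stable with $LD$-degree equal to the number $N$ of positive roots of $\mathfrak g$.

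The routine-but-essential verification is that the Levendorskii--Soibelman relations really do make $\mathcal F$ a multi-filtration with the claimed graded algebra: one must check that multiplying $E_\beta^p$ by $E_\beta^r$ lands in $\mathcal F_{p+r}$ and that, modulo $\mathcal F_{\prec p+r}$, it equals the appropriately $q$-commuted monomial. This is exactly the content of \cite[I.6.11]{Goodearl} and I would cite it rather than redo it. The main conceptual obstacle — and the only place any real care is needed — is ensuring that the hypotheses of \cite[Theorem 4.3]{Zhang} and \cite[Theorem 2.8]{Torrecillas} genuinely apply, i.e.\ that the ordering $\preceq$ is linear admissible and that the filtration pieces are finite-dimensional; both are immediate for the lexicographic order on $\mathbb{N}^N$ and the PBW basis, so once the bookkeeping is in place the theorem follows formally. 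No other difficulty is expected.
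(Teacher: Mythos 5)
Your proposal is correct and follows essentially the same route as the paper: the lexicographic multi-filtration coming from the Levendorskii--Soibelman relations, the identification of the associated graded algebra with a quantum affine space, \cite[Corollary 6.3(1)]{Zhang} for its $LD$-stability, \cite[Theorem 2.8]{Torrecillas} for $GK\,U_q(\mathfrak{g})^+ = N$, and \cite[Theorem 4.3]{Zhang} combined with $LD \leq GK$ to conclude. Your write-up is in fact slightly more explicit than the paper's about how the upper and lower bounds are combined, but the argument is the same.
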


\

Now we consider $U_q(sl_N)$ and the extended quantum group $U_q^{ext}(sl_N)$, cf. \cite[7.1]{FH}.  For the rest of  this section we assume that the base field is $\mathbb{C}$.

\

\begin{theorem}\cite[Theorem 7.1]{FH}
$$Q(U_q^{ext}(sl_N)) \cong Q( \mathbb{K}_q[x,y]^{\otimes \ N-1} \otimes \mathbb{K}_{q^2}^{\otimes \, (N-1)(N-2)/2}),$$ where $\mathbb{K}_q[x,y]$ is the quantum plane, and $\mathbb{K}=\mathbb{C}(Z_1, \ldots, Z_{N-1})$, a purely transcedental extension.
\end{theorem}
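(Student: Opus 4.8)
The plan is to prove the stated isomorphism as a \emph{quantum Gelfand--Kirillov property}: I would realise $Q(U_q^{ext}(sl_N))$ as a quantum Weyl field over a purely transcendental centre and then match the factors to the root data of $sl_N$. The argument splits into three stages: a reduction of each triangular part to a quantum affine space, the passage to a single quantum torus and its symplectic normal form, and a combinatorial identification of the resulting parameters. Throughout I set $M = N(N-1)/2$ for the number of positive roots and use the PBW basis $\{E_\beta^m\}$ together with the Levendorskii--Soibelman relations recalled above.

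First I would exploit the triangular decomposition $U_q^{ext}(sl_N) = U_q^- \otimes U^0 \otimes U_q^+$, where $U^0$ is the extended toral part. Ordering the root vectors $E_{\beta_1}, \ldots, E_{\beta_M}$ as in the fixed reduced expression for $w_0$, the relations $E_{\beta_i}E_{\beta_j} - q^{(\beta_i,\beta_j)}E_{\beta_j}E_{\beta_i} = \sum_m z_m E_\beta^m$ present $U_q^+$ as an iterated Ore extension in which every straightening term is supported on strictly lower monomials. This is precisely the setting of a Cauchon--Goodearl--Letzter nilpotent algebra, so Cauchon's deleting-derivations algorithm applies and yields an isomorphism of quotient division rings $Q(U_q^+) \cong Q(\mathcal{O}_{\mathbf{q}}(\mathbb{A}^M))$ onto a quantum affine space whose parameters are the leading scalars $q^{(\beta_i,\beta_j)}$; the same holds for $U_q^-$. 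The toral generators $K_i$ act on the $E$'s and $F$'s by the weight scalars $q^{\pm(\cdot)}$ and are already invertible, while the couplings $[E_i,F_i] = (K_i - K_i^{-1})/(q-q^{-1})$ become, after inverting the relevant normal elements, monomial $q$-commutation relations. Assembling these, a suitable localisation of $U_q^{ext}(sl_N)$ becomes a single quantum torus $\mathcal{O}_{\mathbf{q}}((\mathsf{k}^\times)^{N^2-1})$ with the same quotient division ring.

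Second I would bring this quantum torus to normal form. Writing the commutation scalars as $q^{a_{ij}}$ with $A = (a_{ij})$ an integer skew-symmetric matrix of size $N^2-1$, a $\mathrm{GL}(\mathbb{Z})$-congruence (the symplectic, elementary-divisor reduction of $A$, realised by a monomial change of variables) transforms the torus into a tensor product over its centre of rank-one quantum tori with parameters $q^{d_r}$, one for each nonzero elementary divisor $d_r$ of $A$, together with a commutative Laurent part corresponding to the radical of $A$. Passing to fraction fields replaces each rank-one quantum torus by the quantum-plane field $Q(\mathbb{K}_{q^{d_r}}[x,y])$ over the purely transcendental extension $\mathbb{K}$ arising from the commutative part. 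The role of the \emph{extended} torus is exactly to supply the right lattice of grouplike elements so that this radical is spanned by genuine central generators $Z_1, \ldots, Z_{N-1}$ rather than roots thereof.

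Third and finally comes the combinatorial computation, which I expect to be the main obstacle. I would write $A$ explicitly in the basis $(E_{\beta_1}, \ldots, E_{\beta_M}, F_{\beta_1}, \ldots, F_{\beta_M}, K_1, \ldots, K_{N-1})$ from the pairings $(\beta_i,\beta_j)$, the $K$-weights and the Cartan couplings, and compute its elementary divisors and radical. The claim to verify is that the nonzero elementary divisors are $1$ with multiplicity $N-1$ and $2$ with multiplicity $(N-1)(N-2)/2$, and that the radical has dimension $N-1$; these give, respectively, the $N-1$ factors $\mathbb{K}_q[x,y]$, the $(N-1)(N-2)/2$ factors $\mathbb{K}_{q^2}[x,y]$, and the centre $\mathbb{K} = \mathbb{C}(Z_1, \ldots, Z_{N-1})$. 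The dimension count $2(N-1) + 2\cdot\tfrac{(N-1)(N-2)}{2} + (N-1) = N^2-1$ is a consistency check. The delicate part is the exact diagonalisation of this large skew matrix: I would handle it either by an explicit block description adapted to the convex order $\beta_1, \ldots, \beta_M$, or by induction on $N$ along the chain $U_q(sl_{N-1}) \subset U_q(sl_N)$, tracking how each newly added simple root contributes a divisor-$1$ symplectic block while its interactions with the previously present roots produce the divisor-$2$ blocks. A secondary, more routine obstacle is verifying the Cauchon and $\mathcal{H}$-stratification hypotheses in stage one for the extended algebra and checking that the localisations used do not disturb the leading parameters.
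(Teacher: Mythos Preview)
The paper does not prove this theorem: it is quoted verbatim from \cite[Theorem 7.1]{FH} and used as a black box to compute $LD\,U_q^{ext}(sl_N)$ on the way to Theorem~\ref{quantum2}. There is therefore no proof in the paper for your proposal to be compared against.

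That said, your sketch is a reasonable outline of the ``quantum-torus/elementary-divisor'' route to such isomorphisms (in the spirit of Panov, Alev--Dumas, Fauquant-Millet, and others), but it is not the method used in the cited source. Futorny and Hartwig do \emph{not} proceed by Cauchon deleting derivations and a symplectic reduction of a $(N^2-1)\times(N^2-1)$ skew matrix; rather, they realise $U_q^{ext}(sl_N)$ inside a tower of $q$-difference operator algebras via a quantum Gelfand--Tsetlin embedding and then solve a \emph{quantum Noether problem} for symmetric-group invariants to identify the resulting fraction field with the stated tensor product of quantum planes. The combinatorial bookkeeping in their approach is carried by the Gelfand--Tsetlin pattern, not by the elementary divisors of the commutation matrix. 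Your stage-three computation (showing the divisors are $1^{\,N-1}2^{\,(N-1)(N-2)/2}$) is genuinely nontrivial and is exactly the place where one would either appeal to \cite{FH} or to an independent computation; as written it is an assertion rather than an argument.
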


This is the quantum analogue of the Gelfand-Kirillov Conjecture for $sl_N$ (cf. \cite[I.2.11, II.10.4]{Goodearl}). Since the quantum plane has the $GK$ dimension $2$ over its base field  and since it is $LD$ stable, as we saw above, we can immediately compute  $LD \, U_q^{ext}(sl_N)$ as  (*)
$$GK \, \mathbb{K}_q[x,y]^{\otimes \ N-1} \otimes \mathbb{K}_{q^2}^{\otimes \, (N-1)(N-2)/2} = GK_\mathbb{C} \, \mathbb{K} + 2N-2 +N^2 -3N +2= N^2-1,$$ using \cite[Proposition 3.11, 3.12]{Krause}.

On the other hand, by \cite{Obul}, $GK \, U_q(sl_N)= N^2-1$. Since $U_q(sl_N)$ embeds into the extended quantum group (\cite[7.1]{FH}), $LD \, U_q(sl_N) \leq N^2-1$ by (*). Hence, by Theorem \ref{main-Zhang}(3) we get

\

\begin{theorem} \label{quantum2}
$U_q(sl_N)$ is $LD$-stable and $LD \,U_q(sl_N)$ equals  $N^2-1$.
\end{theorem}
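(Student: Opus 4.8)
The plan is to combine the quantum Gelfand--Kirillov isomorphism for the extended quantum group with the $LD$-stability of quantum planes, exactly as the discussion preceding the statement has set up. First I would recall that by \cite[Theorem 7.1]{FH} we have $Q(U_q^{ext}(sl_N)) \cong Q(\mathbb{K}_q[x,y]^{\otimes\, N-1} \otimes \mathbb{K}_{q^2}^{\otimes\,(N-1)(N-2)/2})$ with $\mathbb{K} = \mathbb{C}(Z_1,\ldots,Z_{N-1})$ purely transcendental. Since each quantum plane $\mathbb{K}_q[x,y]$ is $LD$-stable over its base field with $GK$ dimension $2$ (by \cite[Corollary 6.3(1)]{Zhang} together with \cite[Theorem 0.5]{Zhang}), and $LD$ is preserved under passing to the total quotient ring (Theorem \ref{main-Zhang}(2)), the tensor-product computation $(*)$ gives $LD\, U_q^{ext}(sl_N) = GK\, \mathbb{K}_q[x,y]^{\otimes\, N-1} \otimes \mathbb{K}_{q^2}^{\otimes\,(N-1)(N-2)/2} = GK_{\mathbb{C}}\,\mathbb{K} + (2N-2) + (N-1)(N-2) = (N-1) + (2N-2) + (N^2-3N+2) = N^2-1$, using the behaviour of $GK$ under tensor products and purely transcendental extensions \cite[Proposition 3.11, 3.12]{Krause}.

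Next I would bring in the inclusion $U_q(sl_N) \hookrightarrow U_q^{ext}(sl_N)$ from \cite[7.1]{FH}. This is an embedding of prime Goldie rings (both are Noetherian domains, being iterated skew-polynomial-type constructions), so Theorem \ref{main-Zhang}(4) — or simply monotonicity of $LD$ under inclusion of prime Goldie algebras — yields $LD\, U_q(sl_N) \leq LD\, U_q^{ext}(sl_N) = N^2-1$. On the other hand, $GK\, U_q(sl_N) = N^2-1$ by \cite{Obul}. Combining with the general inequality $LD\,A \leq GKtdg\,A \leq GK\,A$ of Theorem \ref{main-Zhang}(3), we obtain $N^2-1 = GK\,U_q(sl_N) \geq LD\,U_q(sl_N)$, and the two bounds squeeze $LD\,U_q(sl_N)$ to exactly $N^2-1 = GK\,U_q(sl_N)$, which is precisely the assertion of $LD$-stability.

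The only genuine subtlety — and the step I would be most careful about — is the direction of the comparison for the \emph{lower} bound on $LD\,U_q(sl_N)$: monotonicity of $LD$ under subalgebra inclusion gives the \emph{upper} bound $LD\,U_q(sl_N) \le N^2-1$, not a lower one, so the lower bound must come entirely from the external input $GK\,U_q(sl_N) = N^2-1$ of \cite{Obul} pushed through Theorem \ref{main-Zhang}(3); one cannot instead try to transfer $LD \ge N^2-1$ from the overring, since $LD$ need not go up along $U_q(sl_N) \subseteq U_q^{ext}(sl_N)$ in the wrong direction. Everything else is bookkeeping: checking that all rings in sight are prime Goldie (domains with finite $GK$, hence Ore), that the quotient-ring isomorphism $(*)$ is $\mathsf{k}$-algebra and hence respects $LD$, and that the $GK$ arithmetic for the tensor factors and the transcendental extension $\mathbb{K}/\mathbb{C}$ is applied correctly. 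With the pieces assembled, the proof is a two-line squeeze, which is why the statement follows immediately from the preceding paragraph and Theorem \ref{main-Zhang}.
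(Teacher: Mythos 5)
Your argument never actually establishes the lower bound $LD\,U_q(sl_N) \geq N^2-1$, so the ``squeeze'' you describe does not exist. Look at the two inequalities you assemble: the inclusion $U_q(sl_N)\subset U_q^{ext}(sl_N)$ together with the monotonicity in Theorem \ref{main-Zhang}(4) gives $LD\,U_q(sl_N)\leq LD\,U_q^{ext}(sl_N)=N^2-1$, an \emph{upper} bound; and Theorem \ref{main-Zhang}(3) reads $LD\,A\leq GKtdg\,A\leq GK\,A$, so feeding in $GK\,U_q(sl_N)=N^2-1$ from \cite{Obul} again yields only $LD\,U_q(sl_N)\leq N^2-1$. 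In your closing paragraph you assert that ``the lower bound must come entirely from $GK\,U_q(sl_N)=N^2-1$ pushed through Theorem \ref{main-Zhang}(3)'', but that theorem bounds $LD$ from above by $GK$, never from below; two upper bounds cannot pin down the value. Since $LD$-stability is precisely the assertion that the a priori inequality $LD\leq GK$ is an equality, the entire content of the theorem lies in the direction you have not supplied. (To be fair, the paper's own wording at this point is equally terse and states the same $\leq$, but a correct proof needs more.)

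The missing ingredient is that the relation between $U_q(sl_N)$ and $U_q^{ext}(sl_N)$ is much tighter than a bare inclusion: the extended quantum group is a free module of finite rank over $U_q(sl_N)$ (it is obtained by adjoining finitely many roots of the group-like generators $K_i$), so $Q(U_q^{ext}(sl_N))$ is finite-dimensional over $Q(U_q(sl_N))$ and Theorem \ref{main-Zhang}(1) (equivalently, the Artinian clause of Theorem \ref{main-Zhang}(4) applied to the quotient division rings, via Theorem \ref{main-Zhang}(2)) gives the \emph{equality} $LD\,U_q(sl_N)=LD\,U_q^{ext}(sl_N)=N^2-1$; combined with $GK\,U_q(sl_N)=N^2-1$ this proves $LD$-stability. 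Alternatively, one could exhibit inside $U_q(sl_N)$ a subalgebra already known to have $LD$ equal to $N^2-1$ (e.g.\ a suitable quantum affine space coming from a PBW basis) and use monotonicity for the lower bound. Your computation of $LD\,U_q^{ext}(sl_N)=N^2-1$ via the quantum Gelfand--Kirillov isomorphism is correct and matches the paper; the defect is solely in the transfer from the extended algebra down to $U_q(sl_N)$.
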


\

This is the content of Theorem \ref{main-1} (\ref{item-3}). We now briefly discuss the $LD$-stability of the universal enveloping algebras of finite dimensional Lie superalgebras (Theorem \ref{main-1} (\ref{item-7})).

\begin{proposition}\label{superalgebras}
Let $\mathfrak{g} = \mathfrak{g}_0 \oplus \mathfrak{g}_1 $ be a finite dimensional Lie superalgebra, $\mathfrak{g}_0$ the even part, $\mathfrak{g}_1$ the odd. Then $U(\mathfrak{g})$ is $LD$-stable, and  its LD degree is $dim \, \mathfrak{g}_0$.

\begin{proof} The algebra
$U(\mathfrak{g})$ is  finitely generated and free as a left and a right module over $U(\mathfrak{g}_0)$. By \cite[Proposition 3.1]{Zhang}, $LD \, U(\mathfrak{g})  = GK \,  U(\mathfrak{g}_0) = dim \, \mathfrak{g}_0$.
\end{proof}
\end{proposition}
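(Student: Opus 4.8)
The statement to prove is Proposition~\ref{superalgebras}: for a finite dimensional Lie superalgebra $\mathfrak{g} = \mathfrak{g}_0 \oplus \mathfrak{g}_1$, the universal enveloping algebra $U(\mathfrak{g})$ is $LD$-stable, with $LD\,U(\mathfrak{g}) = \dim\mathfrak{g}_0$.

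My plan is to exploit the module-finiteness of $U(\mathfrak{g})$ over $U(\mathfrak{g}_0)$ coming from the PBW theorem for Lie superalgebras, and then invoke the results on $LD$ already collected in the excerpt. First I would recall the super-PBW theorem: if $x_1,\dots,x_m$ is a basis of $\mathfrak{g}_0$ and $y_1,\dots,y_r$ a basis of $\mathfrak{g}_1$, then the ordered monomials $x_1^{a_1}\cdots x_m^{a_m} y_1^{\epsilon_1}\cdots y_r^{\epsilon_r}$ with $a_i \in \mathbb{N}$ and $\epsilon_j \in \{0,1\}$ form a $\mathsf{k}$-basis of $U(\mathfrak{g})$. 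Reading this through the subalgebra $U(\mathfrak{g}_0)$ — which by ordinary PBW has the monomials $x_1^{a_1}\cdots x_m^{a_m}$ as a basis — shows that $U(\mathfrak{g})$ is a free left (and right) $U(\mathfrak{g}_0)$-module of rank $2^r$, with the $2^r$ squarefree monomials in the odd generators as a basis. In particular $U(\mathfrak{g})$ is finitely generated both as a left and as a right $U(\mathfrak{g}_0)$-module.

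Next I would apply the cited result \cite[Proposition 3.1]{Zhang}, which states that if $A$ is finitely generated and free (one-sided) over a subalgebra $B$, then $LD\,A = GK\,B$. Here $A = U(\mathfrak{g})$ and $B = U(\mathfrak{g}_0)$, giving $LD\,U(\mathfrak{g}) = GK\,U(\mathfrak{g}_0)$. Finally, the Gelfand--Kirillov dimension of the enveloping algebra of a finite dimensional Lie algebra equals its vector space dimension — this is immediate from ordinary PBW, since $U(\mathfrak{g}_0)$ has the Hilbert series of the polynomial ring in $\dim\mathfrak{g}_0$ variables with respect to the standard filtration, so $GK\,U(\mathfrak{g}_0) = \dim\mathfrak{g}_0$. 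Stringing these together yields $LD\,U(\mathfrak{g}) = \dim\mathfrak{g}_0$. For $LD$-stability one separately notes $GK\,U(\mathfrak{g}) = \dim\mathfrak{g}_0$ as well (again by super-PBW the associated graded is a polynomial ring tensored with an exterior algebra, and the exterior factor is finite dimensional, contributing nothing to growth), so $LD\,U(\mathfrak{g}) = GK\,U(\mathfrak{g})$.

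There is essentially no serious obstacle here; the only point requiring a modicum of care is making sure the hypotheses of \cite[Proposition 3.1]{Zhang} are genuinely met — namely that $U(\mathfrak{g})$ is a \emph{finitely generated} module over $U(\mathfrak{g}_0)$ (true, with $2^{\dim\mathfrak{g}_1}$ generators) and that the relevant freeness holds on the appropriate side (true on both sides by super-PBW). One should also confirm that $U(\mathfrak{g}_0)$ is a prime Goldie ring so that the quoted machinery applies cleanly, which holds since it is a Noetherian domain. With those checks in place the proof is just the three-line chain of equalities indicated above.
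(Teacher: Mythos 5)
Your proposal is correct and follows essentially the same route as the paper: super-PBW gives that $U(\mathfrak{g})$ is a finitely generated free left and right $U(\mathfrak{g}_0)$-module, and then \cite[Proposition 3.1]{Zhang} yields $LD\,U(\mathfrak{g}) = GK\,U(\mathfrak{g}_0) = \dim\mathfrak{g}_0$. Your added observation that $GK\,U(\mathfrak{g}) = \dim\mathfrak{g}_0$ (since the exterior factor in the associated graded algebra is finite dimensional) is a worthwhile explicit check of the $LD$-stability claim that the paper leaves implicit.
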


\section{Prime Contexts and lower transcendence degree}

In this section we will discuss how the lower transcendence degree behaves with respect to certain contexts between prime Goldie rings. In particular, we show that $LD$-degree is a Morita invariant of those rings.
We begin by recalling the following well known result.

\begin{proposition} \label{mg}
Let $_{S}M_R$ be an $S-R$ bimodule, $S=End_R(M_R)$, and $M$ be finitely generated projective on the right. Let $_{R}M^*_S=hom(M_R, R_R)$, which is finitely generated projective (left) $R$-module. Consider the $R-R$ bimodule map $M^* \otimes_S M \mapsto R$. If $m \in M$ is different from $0$ then $M^* m \neq 0$.
\end{proposition}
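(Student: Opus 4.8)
The plan is to exploit the canonical isomorphism between $M$ and its double dual, or rather the evaluation pairing, together with the fact that a finitely generated projective module is a direct summand of a free module. First I would recall the \emph{dual basis lemma}: since $M_R$ is finitely generated projective, there exist $m_1,\dots,m_t \in M$ and $f_1,\dots,f_t \in M^* = \mathrm{hom}(M_R,R_R)$ such that for every $m \in M$ one has $m = \sum_{i=1}^t m_i\, f_i(m)$. The key point is that this expresses the identity of $M$ through the pairing $M \otimes M^* \to R$ on one side; applied to a nonzero $m$, it shows that $f_i(m) \neq 0$ for at least one $i$, since otherwise $m = \sum_i m_i f_i(m) = 0$.

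Next I would connect this to the statement, which concerns $M^* m$, i.e.\ the set $\{f(m) : f \in M^*\} \subset R$ (the image of the bimodule map $M^* \otimes_S M \to R$ restricted to $M^* \otimes m$). The dual basis lemma directly produces an $f_i \in M^*$ with $f_i(m) \neq 0$, so $f_i(m)$ is a nonzero element of $M^* m$, giving $M^* m \neq 0$. This is essentially immediate once the dual basis lemma is invoked; the only care needed is to make sure the identifications are the intended ones — that $M^* m$ really denotes $\{f(m): f\in M^*\}$ and that the bimodule map in the statement is evaluation $f \otimes m \mapsto f(m)$, which is how $M^* \otimes_S M \to R$ is standardly defined when $M^* = \mathrm{hom}(M_R, R_R)$ and $S = \mathrm{End}_R(M_R)$.

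I do not anticipate a genuine obstacle here: the statement is a standard fact about progenerators / the Morita pairing, and the whole content is the dual basis lemma. The one thing to be slightly careful about is the balancing of the $S$-module structures (left versus right) so that $M^* \otimes_S M$ is well-formed — $M^*$ is a right $S$-module via $(f \cdot s)(m) = f(s(m))$ and $M$ is a left $S$-module, so the tensor product over $S$ makes sense — but this bookkeeping does not affect the argument for $M^* m \neq 0$. If one preferred to avoid the dual basis lemma, an alternative is to write $M \oplus N \cong R^t$ as right $R$-modules, extend the coordinate functionals of $R^t$ to $M$, and observe that a nonzero $m \in M \subset R^t$ has some nonzero coordinate; this is the same argument in coordinates.
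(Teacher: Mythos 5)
Your argument is correct and is essentially identical to the paper's own proof: both invoke the dual basis lemma for the finitely generated projective module $M_R$ to produce $m_i$, $f_i$ with $m=\sum_i m_i f_i(m)$, and conclude that some $f_i(m)\neq 0$ when $m\neq 0$. No gaps; the extra remarks on the $S$-balancing and the coordinate-free alternative are fine but not needed.
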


\begin{proof}
By the dual basis lemma (\cite[Lemma 5.2]{McConnell}),  there is a finite collection $\{ m_i \}_{i=1}^n \subset M$, $\{g_1 \}_{i=1}^n \subset M^*$ such that, for any $x \in M$, $x= \sum m_i g_i(x)$. Hence, if $m \neq 0$ then  $g_i(m) \neq 0$ for some $i$.
\end{proof}

We refer to \cite[1.1.6]{McConnell} and \cite[3.12 ]{Jacobson} for generalities on Morita contexts and recall the fundamental theorem of Morita theory.
\

\begin{theorem} \label{Morita}\cite[3.12, 3.14]{Jacobson} 
Let $R$ and $S$ be two Morita equivalent rings. Then we have a Morita context


\[ \begin{bmatrix}  R & M^* \\  M & S  \end{bmatrix}, \]

where $M$ is a bimodule $_{S}M_{R}$, $S = End(M_R)$ and $_{R}M^*_S$ is isomorphic as bimodule to both $hom(M_R, R_R)$ and $hom(_{S}M,_{S}S)$. We also have an isomorphism of bimodules between $_{S}M_{R}$ and $hom(M^*_S, S_S)$, and $hom(_{R}M^* ,  _{R}R)$; $_{S}M$,$ _{R}M^*$, $M_R$, $M^*_S$ are all finitely generated projective modules.
\end{theorem}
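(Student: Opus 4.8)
The plan is to convert the hypothesis ``$R$ and $S$ are Morita equivalent'' into a concrete \emph{strict} Morita context and then to extract every assertion of the theorem from the standard properties of such contexts. First I would fix an equivalence $G$ from the category of right $S$-modules to the category of right $R$-modules and set $M:=G(S_S)$. Since ``finitely generated'', ``projective'' and ``generator'' are categorical properties, $M$ is a finitely generated projective generator (a progenerator) of the category of right $R$-modules; and since an equivalence induces a ring isomorphism on endomorphism rings, $End(M_R)\cong End(S_S)=S$, which supplies a compatible left $S$-action and makes $M$ an $S$--$R$-bimodule $_SM_R$ with $S=End(M_R)$. This already produces the first of the four finitely generated projective modules asserted, namely $M_R$. (If one takes as the \emph{definition} of Morita equivalence the existence of such a progenerator with $S\cong End(M_R)$, this step is vacuous and the rest goes through unchanged.)

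Next I would put $M^*:=hom(M_R,R_R)$, which is an $R$--$S$-bimodule via $(r\cdot f\cdot s)(m)=r\,f(sm)$, where $s\in S=End(M_R)$ acts on the left of $M$; as $M_R$ is finitely generated projective, so is $_RM^*$. The two pairings of the prospective context are the evaluation map $\mu\colon M^*\otimes_S M\to R$, $f\otimes m\mapsto f(m)$, and the map $\nu\colon M\otimes_R M^*\to S$ carrying $m\otimes f$ to the right $R$-linear endomorphism $m'\mapsto m\,f(m')$ of $M$. A routine verification shows that $\mu$ is an $R$--$R$-bimodule map, $\nu$ an $S$--$S$-bimodule map, and that the associativity identities $\mu(f\otimes m)\,f'=f\cdot\nu(m\otimes f')$ and $m\cdot\mu(f\otimes m')=\nu(m\otimes f)\cdot m'$ hold; hence $(R,S,M^*,M,\mu,\nu)$ is a Morita context.

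The one substantive step is to show that this context is strict, i.e.\ that $\mu$ and $\nu$ are surjective. The image of each is a two-sided ideal, of $R$ and of $S$ respectively; the dual basis lemma (\cite[Lemma 5.2]{McConnell}, cf.\ Proposition \ref{mg}) gives $m_i\in M$ and $g_i\in M^*$ with $\sum_i\nu(m_i\otimes g_i)$ equal to the identity of $S=End(M_R)$, so $\nu$ is onto; and since $M_R$ is a \emph{generator}, its trace ideal $\sum_{f\in M^*}f(M)$ is all of $R$, whence $1_R$ lies in the image of $\mu$ and $\mu$ is onto. Granting strictness, the ``Morita I'' bookkeeping finishes the proof: using the relations $1_S=\sum_i\nu(m_i\otimes g_i)$ and $1_R=\sum_j\mu(g_j'\otimes m_j')$ together with the two context axioms, one shows that $\mu$ and $\nu$ are in fact bijective, that each of $_SM$, $M_R$, $_RM^*$, $M^*_S$ is a direct summand of a finite free module on the appropriate side (hence finitely generated projective), and that the canonical bimodule maps $M^*\to hom(_SM,_SS)$, $f\mapsto(m\mapsto\nu(m\otimes f))$, together with $M\to hom(M^*_S,S_S)$ and $M\to hom(_RM^*,_RR)$, are isomorphisms --- the identification $M^*\cong hom(M_R,R_R)$ being built into the definition of $M^*$.

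I expect the main obstacle, apart from the steady care needed to keep the two-sided module structures and the sides of the tensor products straight, to be precisely the surjectivity of $\mu$: this is the one place where the full strength of the hypothesis --- a genuine equivalence of categories, equivalently that $M_R$ is a generator and not merely finitely generated projective --- actually enters; everything else is formal manipulation with the two evaluation pairings and the dual basis lemma.
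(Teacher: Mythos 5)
Your proposal is correct: it is the standard ``Morita I'' argument (progenerator $M=G(S_S)$, dual $M^*=hom(M_R,R_R)$, the two evaluation pairings, strictness via the dual basis lemma and the generator/trace-ideal property, then the formal derivation of the bimodule isomorphisms and of finite generation and projectivity on all four sides). The paper gives no proof of this statement --- it simply cites \cite[3.12, 3.14]{Jacobson} --- and your argument is essentially the one found in that reference, so there is nothing to compare beyond noting that you have correctly reconstructed the cited textbook proof, including the one genuinely non-formal point you flag, namely that surjectivity of $\mu$ is where the generator property (i.e.\ the full strength of the equivalence) is used.
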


\

We now recall the notion of prime contexts introduced in \cite{Nicholson}  (cf. also \cite[3.6.]{McConnell})

\begin{definition} \label{defprime}
\cite[3.6.5]{McConnell}
Let $R$, $S$ be prime rings, $V$ a $R-S$ bimodule and $W$ a $S-R$ bimodule. Then the Morita context


\[ \begin{bmatrix}  R & V \\ W & S  \end{bmatrix}, \]

is called a \emph{prime context} if for $v \neq 0 \in V, s \neq 0 \in S, w \neq 0 \in W$, $vW$, $Vw$ and $VsW$ are all non-zero.
\end{definition}

\

Morita equivalences are a particular case of prime contexts, as the following result shows.

\begin{proposition} \label{moritaisprime}
If $R$ and $S$ are prime rings, then a Morita context, which is a Morita equivalence for them, is also a prime context.
\end{proposition}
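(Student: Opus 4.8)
\textbf{Proof proposal for Proposition \ref{moritaisprime}.}

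The plan is to verify the three defining conditions of Definition \ref{defprime} directly, using the structure of the Morita context supplied by Theorem \ref{Morita} together with the non-degeneracy statement of Proposition \ref{mg}. Write the Morita equivalence as
\[ \begin{bmatrix}  R & M^* \\ M & S  \end{bmatrix}, \]
so that $V = M^*$ is the $R$-$S$ bimodule and $W = M$ is the $S$-$R$ bimodule, with $S = \mathrm{End}(M_R)$, and $M$, $M^*$ finitely generated projective on the appropriate sides. I must show: (i) for $0 \neq w \in M$, one has $M^* w \neq 0$; (ii) for $0 \neq v \in M^*$, one has $v M \neq 0$; (iii) for $0 \neq v \in M^*$ and $0 \neq s \in S$, one has $M^* s M \neq 0$ — wait, more precisely that $v s w$ type products are captured: the condition reads $V s W \neq 0$ for $0 \neq s \in S$, and $vW, Vw \neq 0$ for nonzero $v,w$.

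First I would dispatch (i): since $M$ is finitely generated projective as a right $R$-module and $S = \mathrm{End}(M_R)$, Proposition \ref{mg} applies verbatim and gives $M^* w \neq 0$ for every nonzero $w \in M$, i.e. $Vw \neq 0$. For (ii), the situation is symmetric: by Theorem \ref{Morita}, $M^*$ is finitely generated projective as a left $R$-module and is identified with $\mathrm{hom}({}_S M, {}_S S)$, while $M \cong \mathrm{hom}(M^*_S, S_S)$ and $S = \mathrm{End}({}_R M^*)$ acting on the other side; so the dual basis lemma applied to $M^*_S$ (finitely generated projective) yields elements $\{v_i\} \subset M^*$, $\{h_i\} \subset M = \mathrm{hom}(M^*_S,S_S)$ with $x = \sum v_i h_i(x)$ for all $x \in M^*$, hence for $0 \neq v \in M^*$ some $h_i(v) \neq 0$, giving $vM \neq 0$. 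Thus both ``one-sided'' conditions reduce to the dual basis lemma via Proposition \ref{mg} and its left-hand mirror.

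The remaining and genuinely substantive point is $V s W \neq 0$ for $0 \neq s \in S$, i.e. $M^* s M \neq 0$. Here I would argue as follows: the context map $M \otimes_R M^* \to S$ is surjective — this is precisely what makes the context a Morita \emph{equivalence} (the map realizes the trace ideal, which for an equivalence is all of $S$). Hence $s$ lies in the image, so $s = \sum_k w_k'$-images, more precisely there exist $w_k \in M$, $v_k \in M^*$ with $\sum_k [w_k, v_k] = s$, where $[\,-,-\,]\colon M \times M^* \to S$ is the context pairing. Now if $M^* s M = 0$ we would get, for all $v \in M^*$, $w \in M$, $v s w = 0$; substituting $s = \sum_k [w_k,v_k]$ and using associativity of the Morita context (the mixed associativity $v[w_k,v_k]w = (v w_k) v_k w$ with $v w_k \in R$ acting on $M^*$), one derives that a certain composite vanishes identically, and then surjectivity of $M \otimes_R M^* \to S$ together with the faithfulness of $S$ acting on $M$ forces $s = 0$, a contradiction. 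I expect this last step — correctly bookkeeping the associativity isomorphisms of the context and pinning down exactly where primeness of $S$ (or merely faithfulness of the $S$-action on $M$, which holds since $S = \mathrm{End}(M_R)$ and $M$ generates) enters — to be the main obstacle; everything else is an application of the dual basis lemma. An alternative, possibly cleaner, route for (iii) is to observe that $M^* s M$ is a two-sided ideal of $R$ (it is an $R$-$R$ sub-bimodule of the image of $M^* \otimes_S M \to R$, which is all of $R$ by the equivalence), and a nonzero one precisely when $s \neq 0$ because the functors $- \otimes_S M$ and $- \otimes_R M^*$ are inverse equivalences; since $R$ is prime this ideal is nonzero, so $M^* s M \neq 0$. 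I would likely present this second argument as the main line and keep the pairing computation as a remark.
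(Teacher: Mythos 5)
Your overall strategy is exactly the paper's: the proof there is the one line ``Follows from Theorem \ref{Morita} and Proposition \ref{mg}'', and your verification of $Vw\neq 0$ via Proposition \ref{mg} is precisely the intended content. Two of your three verifications, however, are either off-target or heavier than needed. For the condition $vW\neq 0$ with $0\neq v\in V=M^*$: the product $vW$ lives in the $(1,1)$-corner, i.e.\ in $R$, and is computed by the pairing $M^*\otimes_S M\to R$, which is evaluation; so $vW=v(M)\neq 0$ is immediate from $v$ being a nonzero element of $\mathrm{hom}(M_R,R_R)$. Your dual-basis argument applied to $M^*_S$ produces elements $h_i(v)=w_iv\in S$, i.e.\ it establishes $Wv\neq 0$ (a subset of $S$), which is a different product from the $vW\subseteq R$ that Definition \ref{defprime} asks for --- the needed statement is true and trivial, but it is not what your computation delivers. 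For $VsW\neq 0$: the short route is that $0\neq s\in S=\mathrm{End}(M_R)$ acts faithfully on $M$, so $sw\neq 0$ for some $w\in M$, and then Proposition \ref{mg} gives $M^*(sw)\neq 0$, hence $VsW\neq 0$; your ``main line'' via the ideal $M^*sM$ of $R$ invokes primeness of $R$ where it plays no role and leaves the key implication ($s\neq 0\Rightarrow M^*sM\neq 0$ ``because the functors are inverse equivalences'') unjustified as stated, while your pairing-surjectivity argument can be completed (e.g.\ $VsW=0\Rightarrow Vs(WV)=VsS=0\Rightarrow Vs=0\Rightarrow (WV)s=Ss=0\Rightarrow s=0$) but is not actually carried out. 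So the proposal is correct in spirit and repairable at every point, but as written condition (ii) proves the wrong product and condition (iii) rests on an unproved assertion.
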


\begin{proof}
Follows from Theorem \ref{Morita}  and Proposition \ref{mg}.
\end{proof}

\

The main result of this section is the following:

\begin{theorem} \label{primecontexts}
Let $R$, $S$ be prime Goldie rings with a prime context between them. Then $LD \, R = LD \, S$. In particular, this is the case if $R$ and $S$ are Morita equivalent. Since $GK$ dimension is also Morita invariant (\cite[8.2.9(iii)]{McConnell}) then Morita equivalent rings are $LD$-stable simultaneously.
\end{theorem}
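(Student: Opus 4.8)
The plan is to reduce the statement about prime contexts to the already-established facts about localization and finite-dimensional extensions of division algebras (Theorem \ref{main-Zhang}(1),(2)). The key observation is that a prime context between prime Goldie rings $R$ and $S$ gives, after passing to the Goldie quotient rings, a context between the simple Artinian rings $Q(R)$ and $Q(S)$, and in the simple Artinian world a nonzero bimodule between two simple Artinian rings forces the two rings to be, up to Morita equivalence, the same division ring — more precisely, there will be a common simple Artinian ring finite over each of $Q(R)$ and $Q(S)$.

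First I would check that the prime-context hypothesis passes to quotient rings. Given the context $\begin{bmatrix} R & V \\ W & S \end{bmatrix}$, localize $R$, $S$, $V$, $W$ at their respective sets of regular elements; since $R$ and $S$ are Goldie, $Q(R)$ and $Q(S)$ are simple Artinian, and $V$ becomes a $Q(R)$-$Q(S)$ bimodule, $W$ a $Q(S)$-$Q(R)$ bimodule, with the nondegeneracy conditions ($vW \neq 0$, $Vw \neq 0$, $VsW \neq 0$) surviving because localization is flat and faithful on the relevant sides. The multiplication maps $V \otimes_{Q(S)} W \to Q(R)$ and $W \otimes_{Q(R)} V \to Q(S)$ then have nonzero image which, by primeness (simplicity) of $Q(R)$ and $Q(S)$, must be all of $Q(R)$ and $Q(S)$ respectively. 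So after localization the context is in fact a Morita context realizing a Morita equivalence between $Q(R)$ and $Q(S)$ (a nondegenerate context between simple rings with surjective trace maps is an equivalence).

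Next, write $Q(R) \cong M_{n}(D_1)$ and $Q(S) \cong M_{m}(D_2)$ with $D_1$, $D_2$ division algebras; Morita equivalence gives $D_1 \cong D_2 =: D$. By Theorem \ref{main-Zhang}(2), $LD\,R = LD\,Q(R)$ and $LD\,S = LD\,Q(S)$. Since $M_n(D)$ is a matrix ring over $D$, it is finite-dimensional as a right $D$-module (and contains $D$), so Theorem \ref{main-Zhang}(1) gives $LD\,M_n(D) = LD\,D$; likewise $LD\,M_m(D) = LD\,D$. Chaining these equalities yields $LD\,R = LD\,D = LD\,S$. The Morita-equivalence case is then immediate from Proposition \ref{moritaisprime}, and the final sentence follows because $GK$ is a Morita invariant by \cite[8.2.9(iii)]{McConnell}: if $R$ is $LD$-stable then $LD\,S = LD\,R = GK\,R = GK\,S$, so $S$ is $LD$-stable, and symmetrically.

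The main obstacle I anticipate is the first step: verifying carefully that localization of the prime context at the Ore sets of regular elements really preserves nondegeneracy and produces a genuine Morita context between the quotient rings. One must be sure the bimodules $V$ and $W$ admit compatible two-sided localizations — this needs the Ore conditions on both $R$ and $S$ and a check that the module actions extend — and that the trace maps become surjective rather than merely nonzero; the surjectivity uses that the image is a nonzero two-sided ideal of a simple Artinian ring. Once the context is promoted to an honest equivalence of simple Artinian rings, the rest is a routine invocation of Morita theory (to identify the common division algebra) together with the two structural properties of $LD$ already recorded in Theorem \ref{main-Zhang}.
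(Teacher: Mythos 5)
Your argument is essentially the paper's own proof: the paper simply cites \cite[3.6.9]{McConnell} for the fact that a prime context between prime Goldie rings induces a Morita equivalence between $Q(R)$ and $Q(S)$ (the localization step you re-derive by hand), then writes $Q(R)\cong \mathbb{M}_n(D)$ and $Q(S)\cong \mathbb{M}_m(D)$ over a common division ring $D$ and concludes via $LD$-invariance under localization. One small repair: Theorem \ref{main-Zhang}(1) is stated only for division algebras, so to get $LD\,\mathbb{M}_n(D)=LD\,D$ you should instead invoke Theorem \ref{main-Zhang}(4) with $B=D$ Artinian, or \cite[Corollary 3.2(3)]{Zhang} as the paper does.
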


\begin{proof}
 Given two prime Goldie rings $R$, $S$ and a prime context between them, $Q(R)$ is Morita equivalent to $Q(S)$ by  \cite[3.6.9]{McConnell}. Call $Q=Q(R)$, $P=Q(S)$. By the  Morita theory, $P=End(M_Q)$ for some finitely generated  $Q$-module, and $Q$ is simple Artinian by Goldie's Theorem. Hence, every module over $Q$ is completely reducible, $Q = \bigoplus_{i=1}^nL$ is a direct some of copies of some simple right $Q$-module $L$, which is the unique simple $Q$-module up to isomorphism. Set $D=End(L_Q)$. Then $D$ is a division ring and $Q \cong \mathbb{M}_n(D)$ by Wedderburn-Artin's Theorem. We have $LD \, Q = LD \, D$ by \cite[Corollary 3.2(3)]{Zhang}. Moreover, $M_Q$ is a direct sum of a finite number of $L$'s, and so $P$ is also matrix ring over $D$, and we can apply the previous reasoning. We conclude that   $LD \, P = LD \, D = LD \, Q$. Hence, $LD \, R = LD \, S$ by Theorem \ref{main-Zhang}(2).
\end{proof}

\

With this,  Theorem \ref{main-2} (\ref{item-22}) is proved.

\section{Symplectic reflection algebras}

In this section we are going to use the results of the previous section to study symplectic reflection algebras. The question of wether they are Morita equivalent or not is a very subtle one (\cite{Bellamy}). But we show that, nonetheless, they always belong to a prime context, and  are $LD$-stable.


\begin{definition}\cite{Etingof2}
Let $V$ be a  complex vector space of dimension $2n$, with a non-degenerate skew-symmetric form $\omega$. Let $\Gamma$ be a finite subgroup of $SP_{2n}(\mathbb{C})$ generated by \emph{symplectic reflections}, that is, by the elements $g \in \Gamma$ such that $1-g$ has rank two. Then $\Gamma$ is called a \emph{finite symplectic reflection group}.
\end{definition}

The data $(V, \omega, \Gamma)$ is called a {\em symplectic triple}. We also assume the triple to be indecomposable, that is we assume that $V$ can not be expressed as a direct sum of two non-trivial $\Gamma$-invariant subspaces $V_1, V_2$ with $\omega(V_1, V_2)=0$.
Let $W$ be a complex reflection group acting on a vector space $h$ and hence on its dual $h^*$. Then, for $V = h \oplus h^*$  define a bilinear form

\[ \omega((y,f),(u,g)) = g(y) - f(u), y,u \in h, \, f,g \in h^*. \]

With the diagonal action of $W$ we get then an indecomposable triple, which subsumes the case of rational Cherednik algebras.

For each symplectic reflection $s \in \Gamma$ let $\omega_s$ be the form with radical $ker \, (1-s)$ such that $\omega_s=\omega$ on $im \, (1-s)$.  Let $S$ be the set of symplectic reflections in $\Gamma$, and $c: S \mapsto \mathbb{C}$  a complex valued function in the set of symplectic reflections invariant under conjugation in $\Gamma$. Fix $t \in \mathbb{C}$.

\begin{definition}
\emph{(Symplectic reflection algebras)}
Consider the tensor algebra on $V$, $T(V)$, with the natural action of $\Gamma$ extended from that on $V$. The symplectic reflection algebra, henceforth denoted $\mathbf{H}_{t,c}$, is the quotient of the skew group ring $T(V)*\Gamma$ by the relation

\[ [x,y] = t \omega(x,y) + \sum_{s \in S} c(s) \omega_s(x,y) s, x,y \in V. \]
\end{definition}

\begin{remark}
We will assume $t \neq 0$ since otherwise the symplectic reflection algebra is $PI$ (cf. \cite{Brown}), and hence the question of $LD$-stability is already settled  in \cite{Zhang}. Nonetheless, everything that follows holds without this assumption.
\end{remark} 

\

We recall the following result which shows that the symplectic reflection algebras are prime Goldie rings.

\begin{theorem} \label{prime}\cite[Theorem 4.4]{Brown}
Symplectic reflection algebras are prime Noetherian algebras.
\end{theorem}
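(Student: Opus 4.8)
\textbf{Proof plan for Theorem \ref{prime} (symplectic reflection algebras are prime Noetherian).}

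The plan is to exploit the standard PBW-type filtration on $\mathbf{H}_{t,c}$ and reduce both claims to properties of the associated graded ring. First I would recall the PBW theorem for symplectic reflection algebras (Etingof--Ginzburg): assigning degree $1$ to the generators $V$ and degree $0$ to the group elements $\Gamma$ gives an increasing, exhaustive filtration $\{F_i\}$ on $\mathbf{H}_{t,c}$, and the associated graded algebra is isomorphic to the skew group ring $\operatorname{gr}\mathbf{H}_{t,c}\cong \mathbb{C}[V]\ast\Gamma = S(V)\ast\Gamma$. This is the crucial structural input; once it is in hand the rest is formal. (If one prefers, this is exactly the statement that the ordered monomials in a basis of $V$, with group elements attached, form a $\mathbb{C}$-basis of $\mathbf{H}_{t,c}$.)

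Next I would transfer Noetherianity. The polynomial ring $\mathbb{C}[V]$ is commutative Noetherian by the Hilbert basis theorem, and since $\Gamma$ is a finite group, $\mathbb{C}[V]\ast\Gamma$ is a finitely generated module over the central subring $\mathbb{C}[V]^{\Gamma}$ (which is Noetherian by the Hilbert--Noether finiteness theorem), hence $\mathbb{C}[V]\ast\Gamma$ is itself Noetherian on both sides. A filtered ring whose associated graded ring is (left/right) Noetherian is itself (left/right) Noetherian (\cite[1.6.9]{McConnell}), so $\mathbf{H}_{t,c}$ is Noetherian.

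For primeness the route is slightly more delicate, since $\operatorname{gr}\mathbf{H}_{t,c}=\mathbb{C}[V]\ast\Gamma$ is typically \emph{not} prime (it is prime iff $\Gamma$ acts faithfully with suitable transitivity, which generally fails). So I would not argue directly via the graded ring's primeness. Instead, the cleanest approach is: (i) observe $\mathbf{H}_{t,c}$ is a finitely generated module over its subalgebra $\mathbb{C}[V]^{\Gamma}\otimes 1$ sitting in the centre — more precisely, by the PBW filtration $\mathbf{H}_{t,c}$ is a filtered deformation and one checks $Z:=(\text{image of }\mathbb{C}[V]^{\Gamma})$ is a central affine subalgebra over which $\mathbf{H}_{t,c}$ is module-finite; (ii) an affine algebra module-finite over a central Noetherian domain has only finitely many minimal primes, and these can be analyzed by reducing to the (commutative, hence tractable) fibres; (iii) use the indecomposability of the symplectic triple $(V,\omega,\Gamma)$ together with the nondegeneracy of $\omega$ to rule out a nontrivial central idempotent of $Q(\mathbf{H}_{t,c})$, i.e.\ to show the ring of fractions at $Z\setminus\{0\}$ is simple, which forces $\mathbf{H}_{t,c}$ prime. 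Alternatively, and perhaps more efficiently, one invokes the known description of the centre of $\mathbf{H}_{t,c}$ (for $t\neq 0$ the centre is just $\mathbb{C}$, and for general $t$ it is a domain) and the fact that $\mathbf{H}_{t,c}$ has no zero divisors in low-degree leading terms; primeness then follows from a leading-term argument: if $a\mathbf{H}_{t,c}b=0$ with $a,b\neq 0$, pass to leading symbols $\bar a,\bar b\in\mathbb{C}[V]\ast\Gamma$ and derive a contradiction with the indecomposability hypothesis.

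\textbf{Main obstacle.} The genuinely hard point is primeness, precisely because $\operatorname{gr}\mathbf{H}_{t,c}=\mathbb{C}[V]\ast\Gamma$ is not prime, so the usual ``lift primeness from the graded ring'' principle does not apply. The real work is in using the geometry of the symplectic triple — nondegeneracy of $\omega$ and indecomposability — to show that the deformed relations $[x,y]=t\omega(x,y)+\sum_s c(s)\omega_s(x,y)s$ ``glue'' the non-prime graded pieces into a prime algebra, i.e.\ that there is no nontrivial central idempotent after localizing at the centre. Everything else (PBW, Noetherianity, finiteness over the centre) is routine once the PBW theorem is granted; I would cite \cite{Etingof2} (Etingof--Ginzburg) for the PBW statement and carry out the leading-symbol primeness argument as the one substantive step.
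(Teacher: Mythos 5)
Your Noetherianity argument is fine, but the primeness part contains a genuine error that derails the whole plan. You assert that $\operatorname{gr}\mathbf{H}_{t,c}\cong S(V)\ast\Gamma$ is ``typically not prime'' and therefore abandon the lift-from-the-graded-ring strategy. This premise is false: $\Gamma$ is by definition a subgroup of $Sp(V)\subset GL(V)$, so it acts faithfully on the domain $S(V)$, and a skew group ring of a finite group acting faithfully on a commutative domain is prime (all non-identity automorphisms are X-outer, so one may apply the Fisher--Montgomery/Passman criterion; more elementarily, $Q(S(V))\ast\Gamma\cong \operatorname{End}_{Q(S(V))^\Gamma}(Q(S(V)))$ is simple Artinian by Artin's lemma, and $S(V)\ast\Gamma$ is an order in it). Since the filtration is separated and exhaustive, primeness of $\operatorname{gr}\mathbf{H}_{t,c}$ lifts to $\mathbf{H}_{t,c}$ by the same symbol argument as for Noetherianity: if $I,J$ were nonzero ideals with $IJ=0$, then $\operatorname{gr}(I)\operatorname{gr}(J)\subseteq\operatorname{gr}(IJ)=0$ in the prime ring $\operatorname{gr}\mathbf{H}_{t,c}$. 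This is the standard proof (it is what Brown does; the paper itself gives no proof and simply cites \cite[Theorem 4.4]{Brown}), and neither indecomposability of the triple nor nondegeneracy of $\omega$ enters --- only faithfulness of the $\Gamma$-action.

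The replacement route you propose does not work as stated. Step (i) claims $\mathbf{H}_{t,c}$ is module-finite over a \emph{central} copy of $\mathbb{C}[V]^{\Gamma}$; but for $t\neq 0$ (the case the paper cares about) the centre of $\mathbf{H}_{t,c}$ is just $\mathbb{C}$, as you yourself note two sentences later, so no such central subalgebra exists and steps (ii)--(iii) have nothing to stand on. (The copy of $S(V)^\Gamma$ is central only in the degenerate case $t=0$.) The closing ``leading-symbol'' alternative is essentially the correct argument, but it only works once you accept that $S(V)\ast\Gamma$ \emph{is} prime, which is exactly what you denied. Fix the premise and the proof collapses to three lines; as written, the ``main obstacle'' you identify is an artifact of a wrong claim about skew group rings.
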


Let $e = 1/|\Gamma| \sum_{h \in \Gamma} h$ be an idempotent. Define the spherical subalgebra of the symplectic reflection algebra as $\mathbf{U}_{t,c}:= e \mathbf{H}_{t,c} e $. The same argument as in \cite[Theorem 4.4]{Brown} shows that it is an Ore domain and $e$ is its unit.

For symplectic reflection algebras we have a finite dimensional filtration $\mathcal{F}$ given by $F_{-1}=0$, $F_0 = \mathbb{C} \Gamma$, $F_1 = \mathbb{C} \Gamma \oplus \mathbb{C} \Gamma V, F_i=F_1^i, i \geq 2$. This filtration clearly induces a filtration (also denoted by $\mathcal{F}$) on $\mathbf{U}_{t,c}$: $\mathcal{F} \cap \mathbf{U}_{t,c}$. We have the following isomorphisms \cite[Theorem 1.3]{Etingof2}:

\

\begin{itemize}
\item
$gr_\mathcal{F} \mathbf{H}_{t,c} \cong S(V) * \Gamma$;
\item
$gr_\mathcal{F} \mathbf{U}_{t,c} \cong S(V)^\Gamma$.
\end{itemize}

This shows, in particular, that the spherical has a finite dimensional filtration whose associated filtered algebra is a domain, and hence we have:

\

\begin{theorem} \label{spherical}
The algebra $\mathbf{U}_{t,c}$ is $LD$-stable, and the value of the lower transcendence degree is $dim \, V$.
\end{theorem}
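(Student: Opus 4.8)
The plan is to exploit the finite-dimensional filtration $\mathcal{F}$ on $\mathbf{U}_{t,c}$ whose associated graded algebra is $S(V)^\Gamma$, together with the $LD$-stability machinery already assembled in the excerpt. First I would record the two standard facts needed for the lower bound and the upper bound separately. For the upper bound, since $\mathcal{F}$ is a finite-dimensional filtration of $\mathbf{U}_{t,c}$ with $gr_{\mathcal{F}} \mathbf{U}_{t,c} \cong S(V)^\Gamma$, and since $S(V)^\Gamma$ is a finitely generated commutative domain, one gets $GK\,\mathbf{U}_{t,c} = GK\,S(V)^\Gamma$ by the standard comparison of Gelfand-Kirillov dimension with that of the associated graded algebra (the same argument via \cite[Theorem 2.8]{Torrecillas} used for $U_q(\mathfrak{g})^+$ earlier in the paper). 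Because $S(V)^\Gamma$ has Krull dimension $\dim V$ (the invariant ring of a finite group acting on a polynomial ring in $\dim V$ variables has the same transcendence degree), this gives $GK\,\mathbf{U}_{t,c} = \dim V$.

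For the lower bound I would argue $LD\,\mathbf{U}_{t,c} \geq \dim V$ in one of two ways. The cleaner route: $\mathbf{U}_{t,c}$ is an Ore domain (noted in the excerpt after Theorem \ref{prime}), so $LD\,\mathbf{U}_{t,c} = LD\,Q(\mathbf{U}_{t,c})$ by Theorem \ref{main-Zhang}(2). Now $S(V)^\Gamma$ is a commutative domain of transcendence degree $\dim V$, and one expects $Q(\mathbf{U}_{t,c})$ to contain, or to be controlled by, a subalgebra whose $LD$-degree is at least $\dim V$; concretely, a filtered algebra whose associated graded is a commutative domain of transcendence degree $d$ has $LD \geq d$ by the argument of \cite[Corollary 5.5]{Zhang} (lifting a polynomial subalgebra through the filtration), exactly the mechanism used in Proposition \ref{Galois0} and Proposition \ref{GWA}. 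Since $S(V)^\Gamma$ contains a polynomial ring in $\dim V$ variables (by Noether normalization), lifting homogeneous generators to $\mathbf{U}_{t,c}$ produces a subframe witnessing $VDI(\mathbf{U}_{t,c})_{\dim V}$, whence $LD\,\mathbf{U}_{t,c} \geq \dim V$.

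Combining the two bounds with the general inequality $LD\,\mathbf{U}_{t,c} \leq GK\,\mathbf{U}_{t,c}$ from Theorem \ref{main-Zhang}(3) yields $\dim V \leq LD\,\mathbf{U}_{t,c} \leq GK\,\mathbf{U}_{t,c} = \dim V$, so all are equal and $\mathbf{U}_{t,c}$ is $LD$-stable with $LD$-degree $\dim V$. I would phrase the whole thing as: the spherical subalgebra has a finite-dimensional filtration with commutative domain associated graded of transcendence degree $\dim V$, hence is $LD$-stable by the same reasoning that applies to any algebra admitting such a filtration — essentially packaging a small general lemma ("a filtered algebra whose associated graded is a finitely generated commutative domain of transcendence degree $d$ is $LD$-stable with $LD$-degree $d$") and applying it.

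The main obstacle I anticipate is the lower bound step: justifying that the polynomial subalgebra of $S(V)^\Gamma$ guaranteed by Noether normalization can be lifted through the filtration $\mathcal{F}$ to a genuine polynomial (or at least "sufficiently free") subframe of $\mathbf{U}_{t,c}$ on which the volume difference inequality $VDI_{\dim V}$ holds. One must be careful that choosing preimages of homogeneous generators really produces a subframe whose powers grow at the full rate — this is where the finite-dimensionality of the filtration and the domain property of $gr_{\mathcal{F}}$ are both essential, and where \cite[Corollary 5.5]{Zhang} does the heavy lifting. Once that lemma is in place the rest is bookkeeping with the cited results.
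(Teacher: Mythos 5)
Your proposal is correct and follows essentially the same route as the paper: both arguments rest on the finite-dimensional filtration with $gr_{\mathcal{F}}\,\mathbf{U}_{t,c}\cong S(V)^\Gamma$, compute $GK\,\mathbf{U}_{t,c}=\dim V$ from the associated graded algebra, and obtain the lower bound by lifting $LD$ through the filtration. The paper simply packages your ``small general lemma'' as a direct citation of \cite[Theorem 4.3(4)]{Zhang} (the same result used earlier for $U_q(\mathfrak{g})^+$), which is the appropriate reference for the lifting step rather than \cite[Corollary 5.5]{Zhang}.
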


\begin{proof}
 The algebra $S(V)^\Gamma$, being a commutative domain, is $LD$-stable. By \cite[Theorem 4.5(a)]{Krause} and \cite[8.2.9]{McConnell}, $GK \, S(V)^\Gamma = dim \, V$.  Hence, $GK \, \mathbf{U}_{t,c}= dim \, V$ by \cite[Proposition 6.6]{Krause}. Now the statement follows from  \cite[Theorem 4.3(4)]{Zhang}.
\end{proof}

\

Let us now  find a prime context between the symplectic reflection algebra and its spherical subalgebra. To simplify  the notation we set $\mathbf{H}:=\mathbf{H}_{t,c}$ and  $\mathbf{U}:= \mathbf{U}_{t,c}$. Recall the following result 
 of Etingof and Ginzburg. 

\

\begin{theorem} \label{etingof}\cite[Theorem 1.5]{Etingof2}
Lets consider the right $\mathbf{U}$-module $\mathbf{H}e$, and the left $\mathbf{U}$-module $e\mathbf{H}$.
\begin{enumerate}
\item
We have an isomorphism $e\mathbf{H} \cong Hom_\mathbf{U}(\mathbf{H}e, \mathbf{U})$, where $x \in e\mathbf{H}$ goes to the map $F_x(y)=xy$, for $y \in \mathbf{H}e$.
\item
In an analogue fashion, $\mathbf{H}e \cong Hom_\mathbf{U}(e\mathbf{H}, \mathbf{U})$. In particular, the modules $\mathbf{H}e$, $e\mathbf{H}$ are reflexive.
\item
The left action of $\mathbf{H}$ on $\mathbf{H}e$  induces an isomorphism $\mathbf{H} \cong End_\mathbf{U}(\mathbf{H}e)$.
\end{enumerate}
\end{theorem}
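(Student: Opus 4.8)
The plan is to prove all three claims by first disposing of injectivity through a primeness argument, and then settling surjectivity by passing to the associated graded and reducing everything to classical commutative facts about the inclusion $S(V)^\Gamma\hookrightarrow S(V)$.

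First I would check that the three assignments are well-defined bimodule homomorphisms. For (1), if $x\in e\mathbf{H}$ and $y\in\mathbf{H}e$ then $xy\in e\mathbf{H}\mathbf{H}e=e\mathbf{H}e=\mathbf{U}$, and $F_x(yu)=F_x(y)u$ for $u\in\mathbf{U}$, so $x\mapsto F_x$ is a map of $(\mathbf{U},\mathbf{H})$-bimodules $e\mathbf{H}\to Hom_\mathbf{U}(\mathbf{H}e,\mathbf{U})$; the maps in (2) (via $y\mapsto(x\mapsto xy)$) and in (3) (left multiplication $\mathbf{H}\to End_\mathbf{U}(\mathbf{H}e)$) are handled identically. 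Injectivity in every case follows from primeness of $\mathbf{H}$ (Theorem \ref{prime}): if, say, $x\mathbf{H}e=0$ with $x\in e\mathbf{H}$, then the two-sided ideals $\mathbf{H}x\mathbf{H}$ and $\mathbf{H}e\mathbf{H}$ satisfy $(\mathbf{H}x\mathbf{H})(\mathbf{H}e\mathbf{H})=\mathbf{H}(x\mathbf{H}e)\mathbf{H}=0$; since $e\neq0$ gives $\mathbf{H}e\mathbf{H}\neq0$, primeness forces $\mathbf{H}x\mathbf{H}=0$, hence $x=0$. The same computation kills the kernel in (2) and (3).

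The substance is surjectivity, and here I would exploit the filtration $\mathcal{F}$. Because $e\in F_0=\mathbb{C}\Gamma$, the decomposition $\mathbf{H}=\mathbf{H}e\oplus\mathbf{H}(1-e)$ of filtered left $\mathbf{H}$-modules is strict, so $gr_{\mathcal{F}}(\mathbf{H}e)\cong(gr_{\mathcal{F}}\mathbf{H})\bar e$. Using $gr_{\mathcal{F}}\mathbf{H}\cong S(V)*\Gamma$ and $gr_{\mathcal{F}}\mathbf{U}\cong S(V)^\Gamma$ from \cite[Theorem 1.3]{Etingof2}, together with the fact that $\bar e$ is the averaging idempotent of $\mathbb{C}\Gamma$, one identifies $gr_{\mathcal{F}}(\mathbf{H}e)\cong(S(V)*\Gamma)e\cong S(V)$ as a graded right $S(V)^\Gamma$-module, and likewise $gr_{\mathcal{F}}(e\mathbf{H})\cong S(V)$. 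Thus the three injections above are filtered, and their associated graded maps are the natural commutative maps $S(V)\to Hom_{S(V)^\Gamma}(S(V),S(V)^\Gamma)$ (for (1) and, dually, (2)) and $S(V)*\Gamma\to End_{S(V)^\Gamma}(S(V))$ (for (3)). These graded statements are classical: since every symplectic reflection $s$ has $\mathrm{rank}(1-s)=2$, the group $\Gamma\subset Sp(V)\subset GL(V)$ contains no pseudo-reflections, so Auslander's theorem yields $S(V)*\Gamma\cong End_{S(V)^\Gamma}(S(V))$; and since $S(V)$ is a normal domain module-finite over the normal domain $S(V)^\Gamma$, it is a reflexive $S(V)^\Gamma$-module, whence $Hom_{S(V)^\Gamma}(S(V),S(V)^\Gamma)\cong S(V)$, which also gives the asserted reflexivity. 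Finally I would invoke the standard lifting principle: over the Noetherian ring $\mathbf{U}$ (Noetherian because $gr_{\mathcal{F}}\mathbf{U}\cong S(V)^\Gamma$ is), with $\mathbf{H}e$ finitely generated over $\mathbf{U}$, a filtered map inducing an isomorphism on $gr_{\mathcal{F}}$ is itself an isomorphism.

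The main obstacle will be this last step, namely controlling the filtrations on the Hom- and End-modules so that the canonical comparison map $gr_{\mathcal{F}}Hom_\mathbf{U}(\mathbf{H}e,\mathbf{U})\hookrightarrow Hom_{S(V)^\Gamma}(S(V),S(V)^\Gamma)$ is available and compatible, and verifying that the induced graded map is \emph{exactly} the commutative map whose surjectivity was cited, rather than landing in a strictly larger target. Noetherianity of $\mathbf{U}$ and finite generation of $\mathbf{H}e$ and $e\mathbf{H}$ make the comparison map well-behaved, but matching the images on the nose, so that graded surjectivity transfers back to the filtered level, is the delicate point that requires care.
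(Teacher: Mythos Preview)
The paper does not prove this theorem at all: it is quoted from \cite[Theorem 1.5]{Etingof2} and used as a black box in the construction of the prime context (Proposition \ref{primecontext-sympletic}). There is therefore no proof in the paper to compare your proposal against.

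For what it is worth, your sketch is essentially the strategy of the original Etingof--Ginzburg argument: injectivity from primeness of $\mathbf{H}$, and surjectivity by passing to the associated graded, where (3) becomes Auslander's theorem (applicable because symplectic reflections have rank $2$, so $\Gamma\subset GL(V)$ contains no pseudo-reflections) and (1)--(2) become the reflexivity of $S(V)$ over $S(V)^\Gamma$. The obstacle you single out at the end --- controlling the filtration on the Hom side so that the comparison map $gr_{\mathcal F}\,Hom_{\mathbf U}(\mathbf{H}e,\mathbf{U})\hookrightarrow Hom_{S(V)^\Gamma}(S(V),S(V)^\Gamma)$ exists and is compatible with the natural maps --- is precisely the technical point that requires work in the original paper; Etingof and Ginzburg handle it via a separate lemma on good filtrations. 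So your outline is on the right track, but within the present paper nothing of the sort is attempted.
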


\

Now we can show our desired prime context:

\begin{proposition} \label{primecontext-sympletic}
\[ \begin{bmatrix}  \mathbf{U} & e\mathbf{H} \\ \mathbf{H}e & \mathbf{H}  \end{bmatrix}, \]
is a prime context.
\end{proposition}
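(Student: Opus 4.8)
The plan is to verify the three non-vanishing conditions in Definition \ref{defprime} for the displayed context, using the Etingof--Ginzburg Theorem \ref{etingof} to identify $e\mathbf{H}$ and $\mathbf{H}e$ with $\mathrm{Hom}$-modules over $\mathbf{U}$, together with the fact (Theorem \ref{prime}) that $\mathbf{H}$ is prime and the remark after Theorem \ref{spherical} that $\mathbf{U}$ is a domain with unit $e$. First I would set $V=e\mathbf{H}$ (a $\mathbf{U}$--$\mathbf{H}$ bimodule) and $W=\mathbf{H}e$ (an $\mathbf{H}$--$\mathbf{U}$ bimodule), and recall that the bimodule pairings are just multiplication inside $\mathbf{H}$: $V\otimes W\to\mathbf{U}$, $(ex_1)\otimes(x_2e)\mapsto ex_1x_2e$, and $W\otimes V\to\mathbf{H}$, $(x_1e)\otimes(ex_2)\mapsto x_1ex_2$. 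So the three things to check are: for $0\neq v\in e\mathbf{H}$, $v\mathbf{H}e\neq0$; for $0\neq w\in\mathbf{H}e$, $e\mathbf{H}w\neq0$; and for $0\neq v\in e\mathbf{H}$, $0\neq u\in\mathbf{U}$, $0\neq w\in\mathbf{H}e$, one has $e\mathbf{H}\cdot u\cdot\mathbf{H}e\neq 0$ — more precisely $v u w\neq 0$, i.e. the product $(e\mathbf{H})u(\mathbf{H}e)$ is nonzero.

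For the first condition, suppose $0\neq v=ex\in e\mathbf{H}$ with $v\mathbf{H}e=0$. Then $v\mathbf{H}e=0$, and since $v=ve$ (as $e$ is the identity of $\mathbf{U}$ and $v\in e\mathbf{H}=e\mathbf{H}e\cdot\,$... actually $v\in e\mathbf{H}$ so $v=ev$ but not necessarily $ve=v$), I would instead argue directly in $\mathbf{H}$: $v\mathbf{H}e=0$ implies $v\mathbf{H}e\mathbf{H}=0$, hence $v$ annihilates the two-sided ideal $\mathbf{H}e\mathbf{H}$ on the right; but $\mathbf{H}e\mathbf{H}$ is a nonzero ideal of the prime ring $\mathbf{H}$, so $v\cdot(\mathbf{H}e\mathbf{H})=0$ forces $v=0$ by primeness (right annihilator of a nonzero ideal is zero in a prime ring), a contradiction. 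The second condition is symmetric: $e\mathbf{H}w=0$ gives $\mathbf{H}e\mathbf{H}w=0$, so the nonzero ideal $\mathbf{H}e\mathbf{H}$ has zero right action on $w$, forcing $w=0$. For the third condition, take $0\neq u\in\mathbf{U}=e\mathbf{H}e$ and suppose $e\mathbf{H}\,u\,\mathbf{H}e=0$; then $\mathbf{H}e\mathbf{H}\,u\,\mathbf{H}e\mathbf{H}=0$, i.e. $(\mathbf{H}e\mathbf{H})u(\mathbf{H}e\mathbf{H})=0$ with $u\in\mathbf{H}$, $u\neq0$, and $\mathbf{H}e\mathbf{H}$ a nonzero ideal; primeness of $\mathbf{H}$ then gives $u=0$, a contradiction.

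The main obstacle, and the place where care is needed, is the bookkeeping with the idempotent $e$: elements of $e\mathbf{H}$ and $\mathbf{H}e$ should be manipulated as elements of $\mathbf{H}$ rather than of $\mathbf{U}$, and one must be sure that the ``VsW'' product in Definition \ref{defprime} really is computed via the Morita-context multiplication maps $V\otimes_S W\to R$ and not something finer; but since a prime context only requires \emph{some} product in $VsW$ to be nonzero, passing to the generated ideals $\mathbf{H}e\mathbf{H}$ sidesteps any subtlety about which particular triple products vanish. Alternatively, one can phrase all three conditions uniformly: in the prime ring $\mathbf{H}$, for any nonzero ideal $I$ (here $I=\mathbf{H}e\mathbf{H}$) and any nonzero $x\in\mathbf{H}$, both $Ix\neq0$ and $xI\neq0$, and $IxI\neq0$; applying this with $x$ ranging over $v\in e\mathbf{H}$, $w\in\mathbf{H}e$, $u\in e\mathbf{H}e$ yields exactly the three required non-vanishings, since e.g. $v\mathbf{H}e=0\Rightarrow vI=0$. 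This completes the verification that the displayed array is a prime context, and hence — combining with Theorem \ref{primecontexts} and Theorem \ref{spherical} — that $\mathbf{H}_{t,c}$ is $LD$-stable with $LD\,\mathbf{H}_{t,c}=\dim V$, which is Theorem \ref{main-1}(\ref{item-6}).
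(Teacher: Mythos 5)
Your argument is correct in substance but runs along a genuinely different track from the paper's. The paper verifies the three non-vanishing conditions by quoting the Etingof--Ginzburg double-centralizer theorem (Theorem \ref{etingof}): a nonzero $x\in e\mathbf{H}$ corresponds to a nonzero element of $\mathrm{Hom}_{\mathbf U}(\mathbf{H}e,\mathbf U)$, hence $x\mathbf{H}e\neq 0$; dually for $\mathbf{H}e$; and $\mathbf{H}\cong\mathrm{End}_{\mathbf U}(\mathbf{H}e)$ gives $s\mathbf{H}e\neq 0$ for $0\neq s\in\mathbf H$, after which the second condition finishes $e\mathbf{H}s\mathbf{H}e\neq 0$. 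You instead use only Theorem \ref{prime} (primeness of $\mathbf H$) together with the observation that $I=\mathbf{H}e\mathbf{H}$ is a nonzero two-sided ideal, so that $vI=0$, $Iw=0$ or $IsI=0$ forces $v$, $w$ or $s$ to vanish. This is more elementary and more general --- it shows that for \emph{any} nonzero idempotent $e$ in a prime ring $\mathbf H$ with $e\mathbf{H}e$ prime, the standard context $(e\mathbf{H}e,\,e\mathbf{H},\,\mathbf{H}e,\,\mathbf H)$ is prime --- whereas the paper's route leans on the specific (and much deeper) reflexivity and $\mathrm{End}$-isomorphism statements of Etingof--Ginzburg. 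What the paper's approach ``buys'' here is essentially nothing beyond yours for this proposition; your version is the cleaner one. (You announce at the outset that you will use Theorem \ref{etingof}, but in fact you never need it.)

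One slip to correct: in Definition \ref{defprime} the element $s$ in the condition $VsW\neq 0$ ranges over $S$, which in the displayed array is the bottom-right corner $\mathbf H$, not $\mathbf U$. You verify $e\mathbf{H}\,u\,\mathbf{H}e\neq 0$ only for $0\neq u\in\mathbf U=e\mathbf{H}e$, which is the special case $s\in e\mathbf{H}e\subset\mathbf H$. Fortunately your argument needs no modification: for any $0\neq s\in\mathbf H$, $e\mathbf{H}s\mathbf{H}e=0$ implies $IsI=0$ with $I=\mathbf{H}e\mathbf{H}$, and since $I(\mathbf{H}s\mathbf{H})I\subseteq IsI$ is a product of three nonzero ideals in the prime ring $\mathbf H$, this is impossible. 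With that quantifier fixed (and noting, as you do implicitly, that $\mathbf U$ is a domain and hence prime, so both corner rings satisfy the hypotheses of Definition \ref{defprime}), the proof is complete.
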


\begin{proof}
Let $0 \neq x \in e \mathbf{H}$. By Theorem \ref{etingof}, (1), it induces a non-zero homomorphism in $Hom_\mathbf{U}(\mathbf{H}e, \mathbf{U})$, given by left multiplication by $x$. So, indeed, $x \mathbf{H}e \neq 0$. Similarly, if $0 \neq y \in \mathbf{H}e$, then $e \mathbf{H} y \neq 0$, by Theorem \ref{etingof} (2). Finally, if $0 \neq s \in \mathbf{H}$ then $s \mathbf{H}e \neq 0$ by
Theorem \ref{etingof}, (3), and hence
$e\mathbf{H} s \mathbf{H}e$ is non-zero by the preceeding reasoning.
\end{proof}

\
Combining  Theorems \ref{primecontexts}, \ref{spherical} and Proposition \ref{primecontext-sympletic} we immediately obtain:

\begin{theorem} \label{sympletic}
$\mathbf{H}_{t,c}$ is $LD$-stable, and the value of the lower transcendence degree is $dim \, V$.
\end{theorem}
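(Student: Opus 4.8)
The plan is to assemble the final statement — that $\mathbf{H}_{t,c}$ is $LD$-stable with $LD\,\mathbf{H}_{t,c} = \dim V$ — purely from the three results just established, with no new technical work. The machinery is already in place: Theorem \ref{spherical} gives $LD\,\mathbf{U}_{t,c} = \dim V$ and $LD$-stability of the spherical subalgebra; Proposition \ref{primecontext-sympletic} exhibits a prime context between $\mathbf{U}_{t,c}$ and $\mathbf{H}_{t,c}$; and Theorem \ref{primecontexts} says that two prime Goldie rings joined by a prime context have equal lower transcendence degree. So the skeleton of the argument is a one-line chain of equalities, and the only thing requiring genuine attention is checking the hypotheses of Theorem \ref{primecontexts} are met.

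First I would verify that both $\mathbf{H}_{t,c}$ and $\mathbf{U}_{t,c}$ are prime Goldie rings, since that is exactly what Theorem \ref{primecontexts} demands. For $\mathbf{H}_{t,c}$ this is Theorem \ref{prime} (it is prime Noetherian, hence Goldie). For $\mathbf{U}_{t,c}$ the excerpt already records that "the same argument as in \cite[Theorem 4.4]{Brown} shows that it is an Ore domain", and an Ore domain is prime Goldie. I would also note that $\mathbf{U}_{t,c} = e\mathbf{H}_{t,c}e$ has $e$ as its identity element, so the prime context in Proposition \ref{primecontext-sympletic} is set up with the correct unital structure on the corner. With these checks in hand, Theorem \ref{primecontexts} applies verbatim to the context
\[ \begin{bmatrix} \mathbf{U}_{t,c} & e\mathbf{H}_{t,c} \\ \mathbf{H}_{t,c}e & \mathbf{H}_{t,c} \end{bmatrix}, \]
yielding $LD\,\mathbf{H}_{t,c} = LD\,\mathbf{U}_{t,c}$.

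Then the conclusion drops out: by Theorem \ref{spherical}, $LD\,\mathbf{U}_{t,c} = \dim V = GK\,\mathbf{U}_{t,c}$; and since $GK$ dimension is Morita invariant (the prime context makes $Q(\mathbf{H}_{t,c})$ and $Q(\mathbf{U}_{t,c})$ Morita equivalent by \cite[3.6.9]{McConnell}, or alternatively one can quote $GK$-invariance under the context directly), we get $GK\,\mathbf{H}_{t,c} = \dim V$ as well. Hence $LD\,\mathbf{H}_{t,c} = \dim V = GK\,\mathbf{H}_{t,c}$, which is precisely $LD$-stability with the stated value. Alternatively, once $LD\,\mathbf{H}_{t,c} = LD\,\mathbf{U}_{t,c} = \dim V$ is known, one can finish via Lemma \ref{main-lemma} applied to the inclusion of a finitely generated subalgebra of $GK$-dimension $\dim V$ (e.g. using $gr_{\mathcal F}\mathbf{H}_{t,c} \cong S(V)*\Gamma$, which has $GK$-dimension $\dim V$, together with \cite[Proposition 6.6]{Krause} to pass from the associated graded back to $\mathbf{H}_{t,c}$).

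Honestly, there is no real obstacle here — the difficulty was entirely front-loaded into Theorem \ref{spherical} (getting the $LD$-stability of the spherical subalgebra from the commutative associated graded $S(V)^\Gamma$) and into Proposition \ref{primecontext-sympletic} (using Etingof–Ginzburg's Theorem \ref{etingof} to build the prime context). The present statement is a corollary-style assembly. If I had to name the one point deserving care, it is making sure the $GK$-dimension of $\mathbf{H}_{t,c}$ is also computed to be $\dim V$ (not merely that $LD$ equals $\dim V$), since $LD$-stability is the equality $LD = GK$; this is handled either by Morita invariance of $GK$ across the context, by \cite[8.2.9]{McConnell}, or by the filtered–graded argument via $S(V)*\Gamma$.
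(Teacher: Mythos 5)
Your proof is correct and follows the paper's own route exactly: the paper obtains Theorem \ref{sympletic} precisely by combining Theorems \ref{primecontexts}, \ref{spherical} and Proposition \ref{primecontext-sympletic}, so that the prime context forces $LD\,\mathbf{H}_{t,c} = LD\,\mathbf{U}_{t,c} = \dim V$. Your added care in verifying $GK\,\mathbf{H}_{t,c} = \dim V$ addresses a step the paper leaves implicit; of your two suggested ways to do this, the filtered--graded one via $gr_{\mathcal F}\mathbf{H}_{t,c}\cong S(V)*\Gamma$ is the reliable one, since $GK$ dimension need not be preserved under passage to quotient rings and $\mathbf{H}_{t,c}$ and $\mathbf{U}_{t,c}$ themselves need not be Morita equivalent.
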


With this, Theorem \ref{main-1}(\ref{item-6}) is proved.

 
\




\section*{Acknowledgments}

V.\,F.\ is supported in part by the CNPq (304467/2017-0) and by the Fapesp (2018/23690-6); J.S. is supported by the Fapesp (2018/18146-5);  I. S. is supported by the CNPq (304313/2019-0) and by the Fapesp (2018/23690-6).
 The second author is grateful to K. Goodearl for useful discussion.

\end{document}